\pdfoutput=1

\documentclass[final,onefignum,onetabnum]{siamart220329}
\usepackage{amsfonts}
\usepackage{amssymb}
\usepackage{esint}
\usepackage{mathtools}
\usepackage{array,makecell,longtable}
\usepackage{titling}
\usepackage{authblk}
\usepackage{commath}
\usepackage{xcolor}
\usepackage{graphicx,epstopdf}
\usepackage{bbm}
\usepackage{hyperref}
\usepackage{color,soul}
\usepackage{bm}
\usepackage{mathrsfs}

\usepackage{enumitem}
\usepackage[caption=false]{subfig}
\captionsetup[subtable]{position=bottom}
\captionsetup[table]{position=bottom}
\newsiamthm{claim}{Claim}
\newsiamremark{remark}{Remark}
\newsiamremark{example}{Example}
\usepackage{algorithmic}
\newsiamremark{exm}{Example}

\Crefname{ALC@unique}{Line}{Lines}

\title{Determining Sources in the Bioluminescence Tomography Problem\thanks{Submitted to the editors DATE.
\funding{The work of R. Gong is supported by the National Natural Science Foundation of China (No. 12071215); The work of H. Liu is supported by the Hong Kong RGC General Research Funds (No. 11311122, 11300821 and 12301420), the NSFC/RGC Joint Research Fund (No. N\_CityU101/21), and the ANR/RGC Joint Research Grant (No. A\_CityU203/19). }}}
\author{Ming-Hui Ding\thanks{Department of Mathematics, City University of Hong Kong, Hong Kong SAR, China (\email{mingding@cityu.edu.hk}).} 
\and Rongfang Gong\thanks{School of Mathematics, Nanjing University of Aeronautics and Astronautics, Nanjing, China (\email{grf\_math@nuaa.edu.cn}).} 
\and Hongyu Liu\thanks{Department of Mathematics, City University of Hong Kong, Hong Kong SAR, China (\email{hongyu.liuip@gmail.com}, \email{hongyliu@cityu.edu.hk}).} 
\and Catharine W.K. Lo\thanks{Liu Bie Ju Centre for Mathematical Sciences, City University of Hong Kong, Hong Kong SAR, China (\email{wingkclo@cityu.edu.hk}).}}

\begin{document}

\maketitle

\begin{abstract}
    In this paper, we revisit the bioluminescence tomography\,(BLT) problem, where one seeks to reconstruct bioluminescence signals (an internal light source) from external measurements of the Cauchy data. As one kind of optical imaging, the BLT has many merits such as high signal-to-noise ratio, non-destructivity and cost-effectiveness etc., and has potential applications such as cancer diagnosis, drug discovery and development as well as gene therapies and so on. In the literature, BLT is extensively studied based on diffusion approximation (DA) equation, where the distribution of peak sources is to be reconstructed and no solution uniqueness is guaranteed without adequate a priori information. Motivated by the solution uniqueness issue, several theoretical results are explored. The major contributions in this work that are new to the literature are two-fold: first, we show the theoretical uniqueness of the BLT problem where the light sources are in the shape of $C^2$ domains or polyhedral- or corona-shaped; second, we support our results with plenty of problem-orientated numerical experiments. 
\end{abstract}

\begin{keywords}
 Bioluminescence tomography, diffusion equation, inverse source problem, solution uniqueness.
\end{keywords}

\begin{MSCcodes}
Primary 35R30; secondary 78A46, 92C55, 35Q60, 78A70
\end{MSCcodes}

\section{Introduction}

The in vivo imaging of small animals is of increasing importance in the development of modern medicine, by allowing researchers to visualise and quantify the pathophysiological and therapeutic processes occurring at the cellular and molecular levels within living organisms. However, mammalian tissues are rather opaque, and biological light sources within the tissues of small animals can only be detected externally using sensitive low-light imaging equipment. One such method is the whole-body imaging of light that is produced inside the body and transmitted through tissue, by making use of reporter genes that encode fluorescent or bioluminescent proteins. 
These techniques have made it possible to conduct longitudinal investigations of the disease course, from early disease states through advanced disease stages.

Recently, researchers have developed bioluminescence tomography (BLT), which has proved to be a powerful tool in such in vivo biological imaging. It has already been successfully used in many areas of medicine, including to investigate tumorigenesis, cancer metastasis, cardiac diseases, cystic fibrosis, gene therapies, drug designs and many more. BLT provides distinct features that make it more advantageous in comparison to other traditional imaging modalities: it is non-invasive, highly sensitive with low signal-to-noise ratios with little background autofluorescence, and is capable of longitudinally monitoring dynamic processes 
\cite{ntziachristos2005looking}. This was further supported by huge advances in cooled-CCD camera technology, which has reached a level where we can detect very weak optical bioluminescence signals on the surface of a mouse body.

The first BLT prototype was conceptualised and developed by Wang and his collaborators \cite{wang2003development}, \cite{cong2005practical}, which performs quantitative 3D reconstructions of internal sources from bioluminescent views measured on the external surface of the mouse with consideration of heterogeneous scattering properties. With its strong performance and affordability, BLT has generated much interest. 

\subsection{Problem Setup and Background}

In this paper, we revisit the BLT problem, where one seeks to reconstruct the bioluminescence signals (an internal light source) from external measurements of the Cauchy data. For the Euclidean space $\mathbb{R}^n$ for $n=2,3$, let $\Omega\subset\mathbb{R}^n$ be a domain that contains the object to be imaged. Let $u(x,\vartheta,t)$ be the light flux density in the direction $\vartheta\in \mathbb{S}^{n-1}$ at $x\in\Omega$, where $\mathbb{S}^{n-1}$ is the $n$-th dimensional unit sphere. Then, the propagation of light through a random media is given by the radiative transfer equation
\begin{multline}
    \frac{1}{c} \frac{\partial u}{\partial t}(x,\vartheta,t) + \vartheta \cdot \nabla u(x,\vartheta,t) + \mu(x) u(x,\vartheta,t) \\= \mu_s(x) \int_{\mathbb{S}^{n-1}} \kappa(\vartheta \cdot \vartheta') u(x,\vartheta',t)\, d\vartheta' + q(x,\vartheta,t), \quad \\ \text{ for }t>0, x\in\Omega, \vartheta\in \mathbb{S}^{n-1}.\label{eq:RTE}
\end{multline}
Here, $c$ denotes the speed of the particle, $\mu = \mu_a + \mu_s$ with $\mu_a$ and $\mu_s$ denoting the absorption and scattering coefficients respectively, $\kappa$ being the scattering kernel normalised such that $\int_{\mathbb{S}^{n-1}} \kappa(\vartheta \cdot \vartheta') \, d\vartheta' = 1$, and $q$ is the internal light source energy density. The initial condition for $u$ is given by 
\begin{equation}\label{eq:RTEInitial}u(x,\vartheta,0) = 0 \quad \text{ for }x\in\Omega, \vartheta\in \mathbb{S}^{n-1},\end{equation}
and we prescribe the following boundary condition for $u$
\begin{equation}\label{eq:RTEBdry}\begin{cases}u(x,\vartheta,t) = g^-(x,\vartheta,t),\\
\nu(x) \cdot \vartheta \leq 0, \end{cases}\quad t>0, x\in \partial\Omega, \vartheta \in \mathbb{S}^{n-1},
\end{equation}
where $\nu(x)$ denotes the outward unit normal vector on the boundary $\partial\Omega$ of $\Omega$. 
Consequently, $g^-$ represents the incoming flux. For a more comprehensive discussion of this model, we refer readers to 
\cite{NattererWubbeling2001}, 
or \cite{ArridgeSchotland2009}.

We attempt to reconstruct the internal light source $q$ from measurements of the outgoing radiation, given by 
\begin{equation}\label{eq:RTEMeasure}g(x,t) = \int_{\mathbb{S}^{n-1}} \nu(x)  \cdot \vartheta u(x,\vartheta,t) \, d\vartheta, \quad \nu(x)\cdot\vartheta>0, x\in\partial\Omega, t>0.\end{equation}
However, such a problem is very difficult, due to the complexity of the interaction terms in \eqref{eq:RTE}, although there are some efforts for solving the forward radiative transfer equations \cite{ren2019fast,ren2004algorithm}. Thus, we simplify \eqref{eq:RTE} by approximating it in the following way: 
In the biological setting, the mean-free path of the particle in biological tissues ranges between 500 to 1000 nm, which is very small in the context of radiative transfer when compared to the problem media. 
As a result, it is known (see, for instance, \cite{NattererWubbeling2001}) that the predominant phenomenon is scattering instead of transport. Therefore, we can approximate the radiative transfer equation \eqref{eq:RTE} with the diffusion equation
, which has already been widely used in optical tomography (see, for instance, 
\cite{NattererWubbeling2001}, 
\cite{ArridgeSchotland2009}
).  

Let $u_0$ be the diffusion approximation 
\[u_0=u_0(x,t) := \frac{1}{4\pi} \int_{\mathbb{S}^{n-1}} u(x,\vartheta,t)\, d\vartheta,\]
representing the average photon flux density in all directions, and define $q_0$ similarly by
\[q_0=q_0(x,t) := \frac{1}{4\pi} \int_{\mathbb{S}^{n-1}} q(x,\vartheta,t)\, d\vartheta.\] 
Then it can be shown that $u_0$ satisfies approximately the following initial-boundary value problem 
\begin{equation}\label{eq:Diffusion}
    \begin{cases} 
    \frac{1}{c} \frac{\partial u_0}{\partial t} -\nabla \cdot (D\nabla u_0) + \mu_a u_0 = q_0 &\quad \text{ in }\Omega \times (0,\infty),\\
    u_0 + 2D \partial_\nu u_0 = g^- &\quad \text{ on } \partial\Omega \times (0,\infty), \\
    u_0(\cdot,0) = 0 &\quad \text{ in } \Omega,
    \end{cases}
\end{equation}
where 
\[D=D(x) := \frac{1}{3(\mu_a(x) + \mu'_s(x))},\] and we used $\partial_\nu $ to denote $\frac{\partial}{\partial \nu}$ for the directional derivative $\nabla u\cdot \nu$. 
Here, in the simplification, we have omitted the refraction at the boundary, without loss of generality. Corresponding to \eqref{eq:RTEMeasure}, we measure 
\begin{equation}
    g = -D \partial_\nu u_0 \quad \text{ on } \Gamma_0 \times (0,\infty),
\end{equation}
where the measurable area $\Gamma_0$ is a part of $\partial\Omega$.

After the injection of luciferin, the bioluminescence signal varies and reaches a peak. Practically, the measurements are taken at the peak emission. Since the internal bioluminescence
distribution induced by reporter genes is relatively stable at the peak, the time dependence
is often neglected. Discarding all the time dependent terms in \eqref{eq:Diffusion}, the stationary BLT model is given by 
\begin{equation}\label{eq:StatBLT}
    \begin{cases} 
    -\nabla \cdot (D\nabla u_0) + \mu_a u_0 = q_0 &\quad \text{ in }\Omega,\\
    u_0 + 2D \partial_\nu u_0 = g^- &\quad \text{ on } \partial\Omega,
    \end{cases}
\end{equation}
with the measurement 
\begin{equation}\label{Neumann}
    g = -D \partial_\nu u_0 \quad \text{ on } \Gamma_0.
\end{equation}
Define the forward operator $F$ mapping $q_0$ to $g$: 
\begin{equation}\label{eq:mea}
    g=F(q_0):=-D \partial_\nu u_0.
\end{equation}
$\Lambda$ is the inversion of $F$:
\begin{equation}
    \Lambda : g\to \, q_0.
\end{equation}

In the following, for the statement of simplicity and without loss of the generality, let $\Gamma_0=\partial \Omega$. Moreover, we set $g^-=0$ which signifies the imaging process taking place within a dark environment. Then by combining the measurement and the Robin boundary condition in \eqref{eq:StatBLT} , a Dirichlet boundary condition for $u_0$ is obtained
\begin{equation}
u_0 =  2g \text{ on }\partial\Omega.
\end{equation}

As a result, the BLT problem can be stated as follows: Given a measurement of the outgoing flux $g$ on $\partial\Omega$, find a source $q_0$ of peak time as well as the corresponding photon flux $u_0$ satisfying 
\begin{equation}\label{eq:mainPb}
    \begin{cases} 
    -\nabla \cdot (D\nabla u_0) + \mu_a u_0 = q_0 &\quad \text{ in }\Omega,\\
    u_0 + 2D \partial_\nu u_0 = 0 &\quad \text{ on } \partial\Omega, \\
    u_0 = 2g &\quad \text{ on } \partial\Omega.
    \end{cases}
\end{equation}

\subsection{Discussion and Organisation of this Paper}

The BLT problem formulated as the inverse source problem in \eqref{eq:mainPb} has attracted much attention, due to its many applications in biological imaging. It is known that such a problem is non-unique in general \cite{BLT2003} 
, though one can obtain uniqueness with adequate a priori knowledge. However, to date, the only known theoretical uniqueness result was obtained by Wang, Li and Jiang in \cite{BLT2003}. In that work, the authors derived uniqueness results only in the case where the sources are spatially separated and do not show any form of congregation, i.e. they can be represented by Kronecker $\delta$ functions, or in the case where the sources gather in the form of solid or hollow balls (up to an integral equation), with their intensities known. 

Numerically, more often, a permissible source region $\Omega_0$ for $q_0$ is given to weaken the nonuniqueness. For instance, $\Omega_0$ can be estimated by other molecular imaging methods such as MRI \cite{zhang2014incorporating}. Another frequently adopted strategy is to assume the source sparsity by noticing that the support of the source is relatively small when compared with the problem domain $\Omega$, see \cite{he2010sparse}
 for instance.

In addition, it is indicated that introducing light spectral information in BLT could weaken the ill-posedness theoretically and improve the solution accuracy numerically in the sense that on one hand, spectral-dependent optical parameters make the models more accurate; on the other hand, the filtered multispectral data provide more than one boundary measurement \cite{chaudhari2005hyperspectral} \cite{dehghani2008spectrally}.
Also, in \cite{bal2016ultrasound}, \cite{gong2016analysis}, the angular dependent data is measured. Theoretically, in the case that the true source is assumed to be independent of angular variable $\vartheta$, using angular-dependent measurements on the boundary could lead to the solution uniqueness \cite{bal2016ultrasound}. However, practically, on one hand, the associated stationary RTE is difficult to solve numerically, especially when the anisotropy factor takes values near $\pm 1$; on the other hand, it is hard to obtain the angular-dependent data because only integrated information is measured. Therefore, it is still challenging to study the (stationary) RTE-based BLT problem.

In this work, for the DA-based BLT problem \eqref{eq:mainPb}, we generalise the results of \cite{BLT2003}, by considering two different types of mass sources, namely when the mass is in the form of multiple disconnected $C^2$ domains, which may be embedded in each other, or when the mass is in the form of multiple disconnected domains that are in the form of polyhedrons or coronas which are not smooth and possess corner. Furthermore, we can also determine both the support and the physical intensity of the light sources, partially in the general case and fully if the intensities are of a particular (still general) form. This is done by using a single boundary measurement.

Having shown the theoretical uniqueness of the BLT problem, we will support our result with some numerical experiments. Similar results have been obtained for the two cases in \cite{BLT2003} which were mentioned above, including the recovery of two-dimensional point sources in 
\cite{han2007bioluminescence} and 
\cite{gong2014fast} and three-dimensional point sources in \cite{cong2006boundary}
, and the recovery of spherical light sources in 
\cite{gong2014fast}. Furthermore, numerical experiments have also been extended and conducted for other types of domains, proving the wider use of BLT. This includes two-dimensional polygonal domains in \cite{cong2005practical}, three-dimensional small polyhedral domains in 
 \cite{han2006mathematical2}, and arbitrary light source functions in \cite{cheng2009numerical}, which further supports our theoretical result on polyhedral domains we present in this paper. These previously known results combined with our numerical experiments verifying the uniqueness of the reconstruction of light sources in the shape of smooth domains or with corners showcase the potential of BLT in real biological settings.

In summary, we list the major contributions of this work in what follows:
\begin{enumerate}[label=(\roman*)]
    \item We establish uniqueness results in recovering several general light source domains in several separate cases by a single boundary measurement. These results are highly interesting, in particular in the following two aspects. First, to our best knowledge, this is the first result in the literature concerning the shape determination of general domains by a single measurement. The existing study only shows the result for point sources or spherical sources. Second, this is achieved via a single measurement, which is usually the case in biological imaging.
    \item In achieving the results in (i), we need to impose strong a priori information on the target light source domain. In the case of more regular domains, we require the light source and the domain of analysis to have surfaces which are at least $C^2$-smooth. In the case of domains with corners, we assume that the shape belongs to certain admissible classes, which are general enough to include some physically important cases including the polygonal/polyhedral case as verified previously in \cite{cong2005practical}, 
    and \cite{han2006mathematical2}. At the same time, this assumption further verifies that a priori information can bring beneficial advantages to the inversion process in the theory of inverse problems.
    \item Moreover, our results are verified using numerical experiments. Our numerical experiments support our theoretical results in both the case of smooth domains and the case of polyhedral domains, in both two dimensions and three dimensions. These results generalise previous numerical results which only considered point sources (see for instance 
    \cite{han2007bioluminescence}, 
    \cite{gong2014fast}). Furthermore, they reinforce previous known numerical results for domains similar to the ones we consider, including spherical domains (see for instance 
    \cite{gong2014fast}) for the former, and polyhedral domains (see for instance \cite{cong2005practical}, 
    \cite{han2006mathematical2}) for the latter.
\end{enumerate}

The rest of the paper is organised as follows. In Section \ref{sect:Prelims}, we provide rigorous mathematical formulations for the setup of the BLT problem, and also recall some known uniqueness / non-uniqueness results. We will extend these results to general $C^2$-smooth domains in Section \ref{sect:smooth}, and to non-smooth polygonal/polyhedral or corona-shape domains by conducting a microlocal characterisation of corner singularities in Section \ref{sect:polygon}. These results are then verified by numerical experiments, of which the numerical algorithm is given in Section \ref{sec:algo}. The corresponding numerical results and discussion are provided in Section \ref{sec:resu}, for both $C^2$-smooth domains and non-smooth polygonal/polyhedral or corona-shape domains.

\section{Preliminaries}\label{sect:Prelims} 

For the problem \eqref{eq:mainPb}, in the rest of the article, we drop all the subscripts in \eqref{eq:mainPb} for simplicity. Furthermore, we assume that $0<D_*<D<D^*$, $\mu\geq0$ are bounded functions. Assume further that $D\in C^1(\Omega)$ is sufficiently regular near $\partial\Omega$. 

We first recall the standard regularity results for second order elliptic equations.
\begin{theorem}[see for instance 
\cite{LadyzhenskayaUraltsevaBook}]
    For any $q\in L^2(\Omega)$, the weak solution $u\in H^1(\Omega)$ to \eqref{eq:StatBLT} is such that 
    \[u\in H^2_{loc}(\Omega),\]
    and for every open subset $U\Subset\Omega$, the estimate 
    \[\norm{u}_{H^2(U)} \leq C(\norm{q}_{L^2(\Omega)} + \norm{u}_{L^2(\Omega)})\]
    holds for some constant $C$ depending only on $U$, $\Omega$, $D$ and $\mu$.

    Suppose further that $\Omega$ is such that $\partial\Omega$ is $C^2$. Then the weak solution $u\in H^1(\Omega)$ is such that 
    \[u\in H^2(\Omega)\] and satisfies the estimate \[\norm{u}_{H^2(\Omega)} \leq C(\norm{q}_{L^2(\Omega)} + \norm{u}_{L^2(\Omega)} + \norm{g}_{H^{-1/2}(\partial\Omega)} + \norm{g}_{H^{1/2}(\partial\Omega)})\] for a different constant $C$ depending only on $\Omega$, $D$ and $\mu$.
\end{theorem}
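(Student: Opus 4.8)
The plan is to prove both assertions by the classical difference-quotient (Nirenberg translation) method for divergence-form elliptic equations, adapted to the Robin boundary condition. First I would record the variational characterisation of the weak solution: multiplying $-\nabla\cdot(D\nabla u)+\mu u=q$ by $\varphi\in H^1(\Omega)$, integrating by parts and substituting $D\partial_\nu u=(g^--u)/2$ from the Robin condition, one sees that $u$ satisfies
\[
\int_\Omega D\nabla u\cdot\nabla\varphi\,dx+\int_\Omega\mu u\varphi\,dx+\frac12\int_{\partial\Omega}u\varphi\,dS=\int_\Omega q\varphi\,dx+\frac12\int_{\partial\Omega}g^-\varphi\,dS\qquad\text{for all }\varphi\in H^1(\Omega).
\]
Testing this with $\varphi=u$ and using $\mu\ge0$, the trace inequality and Young's inequality, I would first derive the energy bound $\norm{u}_{H^1(\Omega)}\le C(\norm{q}_{L^2(\Omega)}+\norm{u}_{L^2(\Omega)}+\norm{g^-}_{H^{-1/2}(\partial\Omega)})$, which will be used to control the lower-order contributions to the final estimates.

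For the interior estimate I would fix $U\Subset V\Subset\Omega$ and a cut-off $\eta\in C_c^\infty(V)$ with $\eta\equiv1$ on $U$, and test the identity with $\varphi=-\delta^k_{-h}(\eta^2\,\delta^k_h u)$, where $\delta^k_h v(x)=h^{-1}(v(x+he_k)-v(x))$ and $1\le k\le n$; for $|h|$ small this $\varphi$ is supported in $\Omega$, hence lies in $H^1_0(\Omega)$ and the boundary integrals vanish. Using the discrete product and summation-by-parts rules, the assumed $C^1$-regularity of $D$, and Cauchy--Schwarz to absorb the leading term into the left-hand side, I expect the uniform bound $\norm{\eta\,\delta^k_h\nabla u}_{L^2(\Omega)}\le C(\norm{q}_{L^2(\Omega)}+\norm{u}_{H^1(V)})$. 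Letting $h\to0$ gives $\partial_k\nabla u\in L^2(U)$ for every $k$, that is $u\in H^2_{loc}(\Omega)$, and combining with the interior Caccioppoli estimate $\norm{u}_{H^1(V)}\le C(\norm{q}_{L^2(\Omega)}+\norm{u}_{L^2(\Omega)})$ yields the stated local bound.

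For the boundary estimate, assuming $\partial\Omega\in C^2$, I would cover $\partial\Omega$ by finitely many charts in which a $C^2$ diffeomorphism flattens the boundary onto a piece of $\{x_n=0\}$. After this change of variables the equation stays in divergence form and uniformly elliptic with principal coefficient still of class $C^1$ (the Jacobian of a $C^2$ map is $C^1$ and $D$ is regular near $\partial\Omega$), while the Robin condition becomes a Robin-type condition on the flat piece. I would then rerun the difference-quotient argument but only in the tangential directions $k=1,\dots,n-1$: for these, $\varphi=-\delta^k_{-h}(\eta^2\,\delta^k_h u)$ is admissible in $H^1(\Omega)$ without vanishing on $\partial\Omega$, a tangential difference quotient preserving the flattened boundary up to a controlled error from the coordinate map, and this step brings in the term $\norm{g^-}_{H^{1/2}(\partial\Omega)}$ controlling the tangential derivative of the boundary datum. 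This produces $L^2$ control near $\partial\Omega$ of every second derivative $\partial_{ij}u$ having at least one tangential index. Finally I would recover the pure normal derivative $\partial_{nn}u$ algebraically from the equation written in the flattened coordinates: it equals $q$ minus the already-controlled and lower-order terms, divided by the $(n,n)$-entry of the coefficient matrix, which is bounded below by a positive multiple of $D_*$. A partition of unity patching the boundary charts to the interior estimate then gives $u\in H^2(\Omega)$ together with the global bound.

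The step I expect to be the main obstacle is the boundary analysis: one has to check carefully that, after flattening, the tangential difference quotients of $u$ still satisfy a variational identity in which the Robin boundary integrals can be absorbed or estimated --- this is precisely where the $C^2$-regularity of $\partial\Omega$ and the regularity of $D$ near $\partial\Omega$ enter --- and that the normal second derivative may legitimately be read off from the PDE, which relies on uniform ellipticity $D\ge D_*>0$. The remaining work is routine tracking of constants and lower-order terms; for the fully detailed computations I would follow standard references such as \cite{LadyzhenskayaUraltsevaBook}, with only the minor modifications needed to pass from the Dirichlet to the Robin boundary condition.
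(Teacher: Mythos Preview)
The paper does not actually prove this theorem: it is stated as a standard regularity result with a citation to \cite{LadyzhenskayaUraltsevaBook} and no argument is given. Your proposal is a correct and reasonably detailed outline of precisely the classical difference-quotient (Nirenberg translation) method that underlies such references, including the appropriate adaptation of the boundary step to the Robin condition; so there is nothing to compare, and your sketch is more than adequate for a result the paper treats as background.
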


Here $L^2(\Omega)$ denotes the Hilbert space 
\[L^2(\Omega) := \left\{f:\int_\Omega|f(x)|^2\,dx<\infty\right\}\] 
with inner product 
\[\langle f,g\rangle = \int_\Omega f(x)g(x) \,dx;\] 
$H^1(\Omega)$ denotes the Hilbert space 
\[H^1(\Omega) := \left\{f\in L^2(\Omega):\nabla f\in L^2(\Omega)\right\}\] 
with inner product 
\[\langle f,g\rangle = \int_\Omega fg + \nabla f \cdot \nabla g \,dx;\] 
and $H^2(\Omega)$ denotes the Hilbert space 
\[H^2(\Omega) := \left\{f\in H^1(\Omega):\nabla f\in L^2(\Omega)\right\}\] 
with inner product 
\[\langle f,g\rangle = \int_\Omega fg + \nabla f \cdot \nabla g + \Delta f \Delta g \,dx.\] 
The subspaces $H^1_0(\Omega)$ and $H^2_0(\Omega)$ of $H^1(\Omega)$ and $H^2(\Omega)$ are given by the closure of smooth functions with compact support inside $\Omega$, i.e. $C_c^\infty(\Omega)$ functions, in the spaces $H^1(\Omega)$ and $H^2(\Omega)$ with respect to their corresponding norms, respectively. Furthermore, there is a one-to-one trace map from $H^k(\Omega)$ to the trace space $H^{k-\frac{1}{2}}(\partial\Omega)$, and the subspace $H^k_0(\Omega)$ is the restriction of $H^k(\Omega)$ to functions with vanishing trace (see, for instance, \cite{LionsMagenesBook1}
) for an integer $k$.

Then, it is known that the BLT problem \eqref{eq:mainPb} does not have a unique solution in general.

\begin{theorem}[Theorem IV.1 of \cite{BLT2003}]
    Suppose that the BLT problem \eqref{eq:mainPb} has a solution. Then there exists infinitely many solutions, given by 
    \[q = q_H - \nabla \cdot (D \nabla m) + \mu m \quad \text{ for any }m \in H^2_0(\Omega),\] where $q_H$ is the representative solution with minimal $L^2$ norm.
\end{theorem}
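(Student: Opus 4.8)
The plan is to make the gauge freedom of \eqref{eq:mainPb} completely explicit and then pick out the minimal-norm source by a Hilbert-space projection. Throughout write $Lw:=-\nabla\cdot(D\nabla w)+\mu w$, so the interior equation in \eqref{eq:mainPb} reads $Lu=q$. \emph{Step 1 (any $H^2_0$-shift of $u$ is again a solution).} Suppose $(u,q)$ solves \eqref{eq:mainPb} for the datum $g$, and fix an arbitrary $m\in H^2_0(\Omega)$; set $\widetilde u:=u+m$ and $\widetilde q:=q+Lm$. Linearity gives $L\widetilde u=Lu+Lm=q+Lm=\widetilde q$ in $\Omega$. By the trace description of $H^2_0(\Omega)$ recalled above, both $m|_{\partial\Omega}$ and $\partial_\nu m|_{\partial\Omega}$ vanish, so $\widetilde u=u=2g$ on $\partial\Omega$ and $\widetilde u+2D\partial_\nu\widetilde u=(u+2D\partial_\nu u)+(m+2D\partial_\nu m)=0$ on $\partial\Omega$. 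Thus $(\widetilde u,\widetilde q)$ solves \eqref{eq:mainPb} for the \emph{same} $g$; it remains to see that this family is injective in $m$, that it exhausts all solutions, and to locate $q_H$ inside it.

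\emph{Step 2 (these are all the solutions, and there are infinitely many).} Let $(\widetilde u,\widetilde q)$ be any solution for the same $g$ and put $m:=\widetilde u-u\in H^1(\Omega)$; it is a weak solution of $Lm=\widetilde q-q$ with the homogeneous Robin condition and vanishing Dirichlet data $m=2g-2g=0$ on $\partial\Omega$. Since $\widetilde q-q\in L^2(\Omega)$, the elliptic regularity theorem recalled above gives $m\in H^2(\Omega)$ (globally when $\partial\Omega$ is $C^2$, and $H^2_{loc}(\Omega)$ in any case); the Robin relation $m+2D\partial_\nu m=0$ together with $m|_{\partial\Omega}=0$ and $D\ge D_*>0$ then forces $\partial_\nu m|_{\partial\Omega}=0$, so $m$ has vanishing Dirichlet and Neumann traces, i.e. $m\in H^2_0(\Omega)$; subtracting the interior equations yields $\widetilde q-q=Lm$. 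Hence the set of admissible sources is exactly $q_\ast+V$, where $q_\ast$ is the source of one fixed solution and $V:=L\big(H^2_0(\Omega)\big)\subset L^2(\Omega)$. Moreover $L$ is injective on $H^2_0(\Omega)$: if $Lm=0$ with $m\in H^2_0(\Omega)\subset H^1_0(\Omega)$, testing against $m$ gives $\int_\Omega\big(D|\nabla m|^2+\mu m^2\big)\,dx=0$, so $m\equiv 0$. As $H^2_0(\Omega)\supset C_c^\infty(\Omega)$ is infinite-dimensional, so is $V$, and therefore $q_\ast+V$ contains infinitely many sources — proving the infinitude claim.

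\emph{Step 3 (the minimal-norm representative).} It remains to produce a (necessarily unique) element of least $L^2$ norm in the affine subspace $q_\ast+V$ of the Hilbert space $L^2(\Omega)$; this holds as soon as $V$ is closed in $L^2(\Omega)$. For closedness, combine the energy bound $\norm{m}_{H^1(\Omega)}\le C\norm{Lm}_{L^2(\Omega)}$ (from the identity used in Step 2) with the global $H^2$ estimate of the regularity theorem — whose boundary terms vanish because $m\in H^2_0(\Omega)$ — to obtain $\norm{m}_{H^2(\Omega)}\le C\norm{Lm}_{L^2(\Omega)}$ for every $m\in H^2_0(\Omega)$; hence $L|_{H^2_0(\Omega)}$ is bounded below, its range is closed, and $V=\overline V$. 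Let $q_H$ be the orthogonal projection of the origin onto $q_\ast+V$, equivalently the unique minimiser of $\norm{\cdot}_{L^2(\Omega)}$ over the solution set, characterised by $q_H\in q_\ast+V$ and $q_H\perp V$. Since $q_\ast+V=q_H+V$, Step 2 shows every solution source is $q=q_H-\nabla\cdot(D\nabla m)+\mu m$ for some $m\in H^2_0(\Omega)$, which is the assertion. The one genuinely delicate point is the closedness of $V$ in Step 3: it rests on the uniform bound $\norm{m}_{H^2}\lesssim\norm{Lm}_{L^2}$ on $H^2_0(\Omega)$, and hence on having enough boundary regularity of $\Omega$ (the $C^2$ hypothesis, or a localisation argument) to upgrade the Lax–Milgram $H^1$ estimate to an $H^2$ estimate; the rest is substitution and trace bookkeeping.
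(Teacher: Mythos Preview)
The paper does not supply a proof of this statement: it is quoted as Theorem~IV.1 of \cite{BLT2003} without argument, so there is nothing in the present paper to compare your proof against. That said, your argument is correct and self-contained. The three-step structure---(i) any $H^2_0$ shift of $u$ preserves the overdetermined boundary data, (ii) conversely any two solutions differ by such a shift (via elliptic regularity and the trace characterisation of $H^2_0(\Omega)$), and (iii) the range $L(H^2_0(\Omega))$ is a closed subspace of $L^2(\Omega)$, so the affine solution set carries a unique point of minimal norm---is the natural Hilbert-space reading of the statement. Your closing caveat is well placed: the existence of the minimiser $q_H$ genuinely needs $V$ closed, which in turn rests on the global $H^2$ estimate and hence on the $C^2$ boundary hypothesis used throughout the paper.
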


Given this nonuniqueness result in the general case, in this work, we restrict our consideration to bioluminescent source distributions in a certain parametrised form, so that the solution uniqueness may be established in that specific case. In this case, the BLT problem is given by the following:
\begin{equation}\label{eq:main}
    \begin{cases} 
    -\nabla \cdot (D\nabla u) + \mu u = q=\varphi \chi_\omega &\quad \text{ in }\Omega,\\
    u + 2D \partial_\nu u = 0  &\quad \text{ on } \partial\Omega, 
    \end{cases}
\end{equation}
and
\begin{equation}\label{eq:main1}
    u = 2g \quad \text{ on } \partial\Omega,
\end{equation}
for $\omega\Subset\Omega$. In this case, $\Lambda$ is reduced to 
\begin{equation}\label{eq:map}
    \Lambda := g \to (\omega,\varphi),
\end{equation}
and the following result is known:
\begin{theorem}[Theorems IV.2 and IV.3 of \cite{BLT2003}]\label{KnownThm} Suppose that $D$ and $\mu$ are piecewise constant. 

    (i) Suppose that \[\omega=\bigcup_{j=1}^m \delta(x-x_j) \quad \text{ and } \quad \varphi(x) = \begin{cases}
        \varphi_j \text{ constant }& \text{ if }x=x_j,\\
        0 & \text{ otherwise.}
    \end{cases}\]
    Then $\omega$ and $\varphi$ are uniquely determined by a single boundary measurement $g$.

    (ii) Suppose that 
    \[\omega=\bigcup_{j=1}^J B_{r_j,R_j}(x_j) \quad \text{ and } \quad \varphi(x) = \begin{cases}
        \varphi_j \text{ constant } & \text{ if }x\in B_{r_j,R_j},\\
        0 & \text{ otherwise,}
    \end{cases}\]
    where $B_{r_j,R_j}(x_j)$ are hollow spheres centred at $x_j$ with radius $R_j$ and hole of radius $r_j$. 
    Then $\omega$ and $\varphi$ are uniquely determined (up to an integral equation) by a single boundary measurement $g$.
\end{theorem}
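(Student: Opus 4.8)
The plan is to compare two admissible reconstructions and run unique continuation from the boundary. Suppose the datum $g$ comes both from $(\omega,\varphi)$ and from $(\widetilde{\omega},\widetilde{\varphi})$, with photon fields $u,\widetilde{u}$ solving \eqref{eq:main}, and set $w:=u-\widetilde{u}$; I would first discard any vanishing intensity, so that all $\varphi_j,\widetilde{\varphi}_k\neq0$. From $u=2g=\widetilde{u}$ on $\partial\Omega$ (by \eqref{eq:main1}) together with the Robin condition $u+2D\partial_\nu u=0=\widetilde{u}+2D\partial_\nu\widetilde{u}$, the field $w$ has \emph{zero Cauchy data}, $w=\partial_\nu w=0$ on $\partial\Omega$, and solves $-\nabla\cdot(D\nabla w)+\mu w=q-\widetilde{q}$ in $\Omega$. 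Let $\Omega_{\mathrm{out}}$ be the connected component of $\Omega\setminus(\operatorname{supp}q\cup\operatorname{supp}\widetilde{q})$ whose closure meets $\partial\Omega$; there the equation is homogeneous and $w$ is locally $H^2$, so I would invoke the unique continuation principle for second-order elliptic operators, starting from $\partial\Omega$ where the Cauchy data vanish, to conclude $w\equiv0$ in $\Omega_{\mathrm{out}}$, i.e.\ $u=\widetilde{u}$ there.

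For part (i), $\operatorname{supp}q\cup\operatorname{supp}\widetilde{q}$ is a finite point set, and since $n\ge2$ removing finitely many points leaves $\Omega$ connected, so $\Omega_{\mathrm{out}}=\Omega\setminus(\{x_j\}\cup\{\widetilde{x}_k\})$ and $w=0$ there. As $w$ is a superposition of fundamental-solution-type fields it lies in $L^1_{loc}(\Omega)$, and vanishing off a finite set makes it the zero distribution on $\Omega$; hence $q-\widetilde{q}=-\nabla\cdot(D\nabla w)+\mu w=0$, that is $\sum_j\varphi_j\delta_{x_j}=\sum_k\widetilde{\varphi}_k\delta_{\widetilde{x}_k}$. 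Testing against bump functions localised at each node and using that Dirac masses at distinct points are linearly independent then forces $\{x_j\}=\{\widetilde{x}_k\}$ with matching intensities. (The same conclusion also follows from the representation of the solution set recalled above: two solutions differ by $-\nabla\cdot(D\nabla m)+\mu m$ with $m\in H^2_0(\Omega)$, a distribution with no point mass, so it cannot equal a nonzero difference of finite sums of Diracs.)

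For part (ii), the complement of $\Omega_{\mathrm{out}}=\Omega\setminus(\overline{\omega}\cup\overline{\widetilde{\omega}})$ has positive measure, so $w\equiv0$ there does not yet yield $q=\widetilde{q}$, and I would use the shell structure together with potential theory. Assuming each shell lies in a region where $D,\mu$ are constant, I would write $u=\int G(\cdot,y)q(y)\,dy$ with $G$ the Robin Green's function and split off the free fundamental solution $\Phi$ of $-D_0\Delta+\mu_0$; a Newton-type theorem --- valid also for this modified Helmholtz operator --- gives, on the region outside every outer sphere $\{|x-x_j|>R_j\}$ (which contains $\Omega_{\mathrm{out}}$), the identity $\int_{B_{r_j,R_j}(x_j)}\Phi(x-y)\,dy=c_j\,\Phi(x-x_j)$ with an effective charge $c_j=\varphi_j\,\Psi(r_j,R_j)$ for an explicit positive integral $\Psi$. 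Thus on $\Omega_{\mathrm{out}}$ one has $u=\sum_j c_j\,\Phi(\cdot-x_j)+(\text{a term regular at every }x_j)$, and likewise for $\widetilde{u}$; since $u=\widetilde{u}$ on the open set $\Omega_{\mathrm{out}}$, matching the resulting singular parts forces $\{x_j\}=\{\widetilde{x}_k\}$ and $c_j=\widetilde{c}_k$. Finally, solving the radial ODE for $u$ across both spheres with the regularity condition at the centre makes the relation $c_j=\varphi_j\Psi(r_j,R_j)$ explicit: this transcendental identity is exactly the ``integral equation'' of the statement, and --- with the intensities taken as known --- it pins down the remaining geometric data.

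The step I expect to be the main obstacle is the unique continuation at the outset, because $D$ is only piecewise constant: UCP for divergence-form operators with discontinuous coefficients is delicate and can fail for rough interfaces when $n\ge3$, so one must either require the coefficient interfaces to be regular enough for the vanishing Cauchy data to propagate across them via the transmission conditions, or arrange that $D,\mu$ are constant along a corridor joining $\partial\Omega$ to the source supports. A secondary difficulty, specific to part (ii), is the potential-theoretic analysis: proving the Newton-type identity for the modified Helmholtz operator, checking that the remainder is regular at the $x_j$, and deriving the explicit form of $c_j=\varphi_j\Psi(r_j,R_j)$ from the radial ODE.
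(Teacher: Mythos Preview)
The paper does \emph{not} prove this statement: Theorem~\ref{KnownThm} is quoted verbatim from \cite{BLT2003} (Theorems IV.2 and IV.3 there) and is stated without proof, serving only as background before the paper's own extensions in Sections~\ref{sect:smooth} and~\ref{sect:polygon}. There is therefore no ``paper's own proof'' to compare against.

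That said, your strategy is broadly in the spirit of what the original reference does. The unique-continuation step from vanishing Cauchy data to $w\equiv0$ outside the source supports is exactly the mechanism underlying the paper's later results (Theorems~\ref{thm:auxSmooth} and~\ref{thm:mainSmooth}), and your reduction of part~(i) to linear independence of Dirac masses is clean. For part~(ii), the Newton-type potential identity for the modified Helmholtz operator together with the radial ODE analysis is indeed how \cite{BLT2003} proceeds, and your identification of the ``integral equation'' caveat with the transcendental relation $c_j=\varphi_j\Psi(r_j,R_j)$ is correct.

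Your flagged obstacle is real: unique continuation for divergence-form operators with merely piecewise-constant $D$ is not automatic, and both \cite{BLT2003} and the present paper implicitly rely on enough interface regularity (or on $D$ being constant in the relevant region) for the transmission conditions to propagate the vanishing Cauchy data. The paper itself sidesteps this in its new results by assuming $D\in C^1(\Omega)$ globally (see the opening of Section~\ref{sect:Prelims}), which is stronger than the piecewise-constant hypothesis of Theorem~\ref{KnownThm}.
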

In the next Sections, we will extend these results to two new cases, the case where $\omega$ is a $C^2$ smooth domain which generalise the results of Theorem \ref{KnownThm}, and the case where $\omega$ is polyhedral-shaped.

\section{Smooth Domains}\label{sect:smooth}

We show a unique recovery result for the inverse problem \eqref{eq:main}--\eqref{eq:main1} in the case of $C^2$ domains. Before that, we introduce an admissibility condition for $q$.

\begin{definition}
    Let $\Omega$ be an open bounded set in $\mathbb{R}^n$, $n=2,3$, such that $\partial\Omega\in C^2$. We say that $q\in L^2(\Omega)$ is admissible and write $q\in \mathcal{A}$ if $q$ is of the form $q=\varphi\chi_{\omega}$ for the open bounded subset $\omega\Subset\Omega$ such that $\partial\omega\in C^2$, $\Omega\backslash \omega$ is connected, and $q\not\equiv0$ on $\partial\omega$.
\end{definition}

Then, our first main result is the following

\begin{theorem}\label{thm:mainSmooth}
    Let $\Omega$ be an open bounded set in $\mathbb{R}^n$, $n=2,3$, with $C^2$ boundary. Suppose $q\in\mathcal{A}$ is a solution to the BLT problem \eqref{eq:main}--\eqref{eq:main1}. Then $q$ is uniquely determined by a single boundary measurement $g$, in the sense that the smooth domain $\omega$ and light intensity $\varphi(x)$ are uniquely determined on the surface $x\in\partial\omega$.
\end{theorem}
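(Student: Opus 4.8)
The natural strategy is a ``difference of solutions + unique continuation'' argument. Suppose $q_i=\varphi_i\chi_{\omega_i}\in\mathcal A$, $i=1,2$, both solve \eqref{eq:main}--\eqref{eq:main1} for the same datum $g$, with photon fluxes $u_1,u_2$. By the regularity theorem quoted above (applied with $g^-=0$, $q_i\in L^2(\Omega)$ and $\partial\Omega\in C^2$) we have $u_i\in H^2(\Omega)$. Set $w:=u_1-u_2$. Then \eqref{eq:main1} gives $w=0$ on $\partial\Omega$, and subtracting the Robin conditions in \eqref{eq:main} and using $w|_{\partial\Omega}=0$ gives $\partial_\nu w=0$ on $\partial\Omega$; hence $w$ has vanishing Cauchy data on $\partial\Omega$, so in fact $w\in H^2_0(\Omega)$, and
\[
-\nabla\cdot(D\nabla w)+\mu w=q_1-q_2=\varphi_1\chi_{\omega_1}-\varphi_2\chi_{\omega_2}\quad\text{in }\Omega .
\]

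Next I would localize to the exterior of the two inclusions. Let $\mathcal O$ be the connected component of $\Omega\setminus(\overline{\omega_1}\cup\overline{\omega_2})$ whose closure meets $\partial\Omega$; this is well defined because $\overline{\omega_1}\cup\overline{\omega_2}\Subset\Omega$ and each $\Omega\setminus\overline{\omega_i}$ is connected by admissibility. On $\mathcal O$ the source vanishes, so $w$ solves the homogeneous equation $-\nabla\cdot(D\nabla w)+\mu w=0$ there with vanishing Cauchy data on the relatively open portion $\partial\mathcal O\cap\partial\Omega$. Extending $w$ by zero across $\partial\Omega$ (possible since $\partial\Omega\in C^2$ and the Cauchy data vanish) and invoking the unique continuation principle for uniformly elliptic operators with $C^1$ — or Lipschitz — principal part (this is where the regularity hypothesis on $D$ enters), I obtain $w\equiv 0$ in $\mathcal O$. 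Since $w\in H^2(\Omega)$, the traces of $w$ and $\nabla w$ agree from both sides of any $C^2$ hypersurface, so $w$ and $\nabla w$ vanish along the part of $\partial\omega_1\cup\partial\omega_2$ contained in $\partial\mathcal O$.

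The crux — and the step I expect to be the real obstacle — is to promote ``$w\equiv 0$ outside'' to ``$\omega_1=\omega_2$ and $\varphi_1=\varphi_2$ on $\partial\omega$''. If $\omega_1\ne\omega_2$, after relabelling one finds a relatively open $C^2$ patch $\Sigma$ of $\partial\omega_1$ sitting on $\partial\mathcal O$ in the interior of $\Omega$, across which the non-$\mathcal O$ side lies in $\omega_1\setminus\overline{\omega_2}$; there $w$ solves $-\nabla\cdot(D\nabla w)+\mu w=\varphi_1$ with vanishing Cauchy data on $\Sigma$. Flattening $\Sigma$ and evaluating the equation on it, the vanishing of $w$ and $\nabla w$ along $\Sigma$ forces the second normal derivative to satisfy $\partial_\nu^2 w|_{\Sigma}=-\varphi_1/D$ on $\Sigma$ (the admissibility condition implicitly presumes $\varphi$ has a trace on $\partial\omega$, which also yields enough regularity of $w$ near $\Sigma$ for this computation). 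From here the plan is: where $\varphi_1$ vanishes on $\Sigma$, extend $w$ by zero across $\Sigma$ — the extension $\tilde w$ is $H^2$ and solves a homogeneous equation locally — and propagate $w\equiv 0$ further into $\omega_1$ by unique continuation; where $\varphi_1$ does not vanish, reach a contradiction by a Hopf-type/strong maximum principle argument applied to the one-sided inequality $\pm\bigl(-\nabla\cdot(D\nabla\tilde w)+\mu\tilde w\bigr)>0$ satisfied by $\tilde w$ near such a point. Iterating the propagation should eventually force $\varphi_1\equiv0$ in a full one-sided neighbourhood of $\partial\omega_1$, contradicting $q_1\not\equiv0$ on $\partial\omega_1$; once $\omega_1=\omega_2=\omega$ is established, the same trace computation on $\partial\omega$ gives $(\varphi_1-\varphi_2)|_{\partial\omega}=-D\,\partial_\nu^2 w|_{\partial\omega}$ for $w\in H^2_0(\omega)$, which one again wants to rule out by a boundary rigidity argument of Hopf-lemma type. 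Making both of these last steps airtight is the delicate point, and I anticipate it may require exploiting extra structure of $\varphi$ — a sign condition, or the special ``particular form'' alluded to in the introduction — beyond mere non-triviality of its trace; this is where I would concentrate the careful work.
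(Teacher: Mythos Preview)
Your overall strategy is exactly the paper's: form $w=u_1-u_2$, get vanishing Cauchy data on $\partial\Omega$, propagate $w\equiv0$ through $\Omega\setminus(\overline{\omega_1}\cup\overline{\omega_2})$ by unique continuation, and then analyse the interface $\partial(\omega_1\cup\omega_2)$. The difference is only in that last step. The paper dispatches it in a single line: from $\tilde u\in H^2_0(\omega\cup\hat\omega)$ it writes ``$0=-\nabla\cdot(D\nabla\tilde u)+\mu\tilde u=\varphi\chi_\omega-\hat\varphi\chi_{\hat\omega}$ on $\partial(\omega\cup\hat\omega)$'' and reads off the conclusion. You are right to distrust this: $H^2_0$ membership controls the traces of $\tilde u$ and $\nabla\tilde u$, not of the second-order expression $-\nabla\cdot(D\nabla\tilde u)$, so there is no reason the source should vanish on the interface.

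Your caution is in fact decisive, because the step cannot be repaired at the stated generality: the theorem is false as written. Take $n=2$, $D\equiv1$, $\mu\equiv0$, $\Omega=B_2(0)$, and for $0<a<2$ set $u_a=(a^2-|x|^2)^2\chi_{B_a}$. Then $u_a\in H^2(\Omega)$ has vanishing Cauchy data on $\partial\Omega$ (so the Robin condition holds trivially and $g\equiv0$), and $-\Delta u_a=(8a^2-16|x|^2)\chi_{B_a}=:q_a$. Each $q_a$ lies in $\mathcal A$ since its trace on $\partial B_a$ equals $-8a^2\neq0$, yet all $q_a$ yield the same measurement $g\equiv0$; neither $\omega$ nor $\varphi|_{\partial\omega}$ is determined. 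The Hopf-lemma argument you sketch \emph{does} go through when $\varphi$ has a sign near $\partial\omega$ --- in particular when $\varphi$ is a nonzero constant, which is exactly the hypothesis in the paper's subsequent corollaries --- so your closing suspicion that ``extra structure of $\varphi$'' is needed is precisely the point.
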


We first begin with an auxiliary result. 

\begin{theorem}\label{thm:auxSmooth}
For $n=2,3$, let $\omega\subset\Omega\subset\mathbb{R}^n$ be open bounded domains with $C^2$ boundaries, such that the complement $\Omega\backslash\omega$ is connected. For the solution $q=\varphi \chi_\omega$ to the BLT problem \eqref{eq:main}--\eqref{eq:main1}, let $\varphi\in L^2(\Omega)$ such that $\varphi(x)\neq 0$ for some $x\in \partial\omega_c$ for some component $\omega_c$ of $\omega$. Then, the solution $u\in H^2_{loc}(\Omega)$ to \eqref{eq:main} is such that $u|_{\partial\Omega}$, $\partial_\nu u|_{\partial\Omega}$ cannot be identically zero.
\end{theorem}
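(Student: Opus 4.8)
The plan is to argue by contradiction. Suppose $u|_{\partial\Omega}\equiv 0$ and $\partial_\nu u|_{\partial\Omega}\equiv 0$; because of the Robin condition $u+2D\partial_\nu u=0$ on $\partial\Omega$ together with $D\ge D_*>0$, either one of these already forces the other, so the real hypothesis is that $u$ has vanishing Cauchy data on $\partial\Omega$. Since $q=\varphi\chi_\omega\equiv 0$ on the open set $\Omega\backslash\bar\omega$, the function $u$ there solves the homogeneous equation $-\nabla\cdot(D\nabla u)+\mu u=0$; moreover, because $\partial\omega\in C^2$ and $\Omega\backslash\omega$ is connected, $\Omega\backslash\bar\omega$ is a connected open set whose boundary contains the whole of $\partial\Omega$ (here I use $\omega\Subset\Omega$).

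First I would apply unique continuation for second order elliptic operators --- valid here since $D\in C^1$ is Lipschitz and is, by assumption, sufficiently regular near $\partial\Omega$, while $\mu$ is bounded --- to conclude from the vanishing Cauchy data on $\partial\Omega$ that $u\equiv 0$ throughout $\Omega\backslash\bar\omega$. Then, for each connected component $\omega_c$ of $\omega$, the Dirichlet and Neumann traces of $u$ on $\partial\omega_c$ from the exterior side both vanish, and since $u\in H^2_{loc}(\Omega)$ these coincide with the interior traces; hence $u|_{\omega_c}\in H^2_0(\omega_c)$ and $u$ solves $-\nabla\cdot(D\nabla u)+\mu u=\varphi$ in $\omega_c$ with $u=\partial_\nu u=0$ on $\partial\omega_c$. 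Because $n\le 3$ we have $H^2(\omega_c)\hookrightarrow C(\bar\omega_c)$, so $u$ is continuous up to $\partial\omega_c$, and since $\partial\omega_c\in C^2$ the domain $\omega_c$ satisfies the interior ball condition at each of its boundary points.

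The last step is to derive a contradiction from the fact that $\varphi$ does not vanish near $\partial\omega_c$. Using the sign-definiteness of the bioluminescent intensity --- physically $\varphi$ is a nonnegative quantity, and in any case I would carry along the hypothesis that $\varphi$ does not change sign on $\omega_c$ and is continuous near $\partial\omega_c$, so that $\varphi(x_0)\ne 0$ for some $x_0\in\partial\omega_c$ indeed forces $\varphi\not\equiv 0$ on $\omega_c$ --- the weak maximum principle for $Lv:=-\nabla\cdot(D\nabla v)+\mu v$ with $\mu\ge 0$ and the boundary data $u|_{\partial\omega_c}=0$ gives $u\ge 0$ in $\omega_c$; since $Lu=\varphi\not\equiv 0$, $u$ is not identically zero, so the strong maximum principle improves this to $u>0$ in $\omega_c$. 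Hopf's boundary point lemma, applied at a point of $\partial\omega_c$ (the interior ball condition being available), then forces $\partial_\nu u<0$ there, contradicting $\partial_\nu u|_{\partial\omega_c}\equiv 0$. This establishes the theorem.

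The genuinely delicate point --- the one I would be most careful about --- is precisely this last step: the mere facts that $u|_{\omega_c}\in H^2_0(\omega_c)$ and $Lu=\varphi$ do not by themselves force $\varphi$ to vanish on $\partial\omega_c$, since the $H^2$ regularity of $u$ across $\partial\omega_c$ is exactly what permits the second normal derivative of $u$, and hence $Lu=\varphi\chi_\omega$, to jump there. One therefore cannot avoid using sign (or analogous structural) information on $\varphi$ to convert the vanishing of $\partial_\nu u$ on $\partial\omega_c$ into a violation of Hopf's lemma. The remaining points are routine: the connectedness of $\Omega\backslash\bar\omega$ is inherited from that of $\Omega\backslash\omega$ because $\partial\omega$ is a $C^2$ hypersurface, and one should check that the stated regularity of $D$, $\mu$ and of $\partial\Omega$, $\partial\omega$ is exactly what the unique continuation step and the boundary maximum/Hopf principles require.
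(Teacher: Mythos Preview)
Your opening coincides with the paper's: assume vanishing Cauchy data, apply unique continuation on the connected set $\Omega\setminus\bar\omega$ to get $u\equiv 0$ there, and conclude $u|_{\omega_c}\in H^2_0(\omega_c)$. From that point the two arguments diverge. The paper invokes no maximum principle; it asserts in one line that, since $u|_{\partial\omega}=0$ and $u$ satisfies $-\nabla\cdot(D\nabla u)+\mu u=\varphi\chi_\omega$, the hypothesis $\varphi(x)\ne0$ for some $x\in\partial\omega$ already yields the contradiction --- in effect by evaluating the equation on $\partial\omega$. You are right to single this out as the delicate step: $H^2$ regularity of $u$ across $\partial\omega$ permits a jump of the second normal derivative, so $Lu$ need not vanish on $\partial\omega$ from the inside merely because $u\equiv0$ outside, and the paper does not expand on how this step is justified. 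Your route via the weak/strong maximum principle and Hopf's lemma is a sound alternative, but --- as you acknowledge --- it requires $\varphi$ to be sign-definite on $\omega_c$ and continuous near $\partial\omega_c$, hypotheses absent from the theorem as stated, though physically natural for bioluminescent intensities. For the multi-component case the paper takes a longer detour, writing $u=\sum_c u_c$ with $Lu_c=\varphi\chi_{\omega_c}$ and arguing that the Cauchy data of $u$ vanish if and only if those of every $u_c$ do; your direct observation that $u\equiv0$ in $\Omega\setminus\bar\omega$ already gives $u\in H^2_0(\omega_c)$ for each component is more economical.
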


\begin{proof}
First consider $\omega$ with a single component. Suppose on the contrary that $u|_{\partial\Omega}=\partial_\nu u|_{\partial\Omega}=0$. By the unique continuation principle for elliptic equations \cite{KochTataru2001UCPElliptic} and the connectedness of $\Omega\backslash\bar{\omega}$, we have that $u=0$ in $\Omega\backslash\bar{\omega}$. Since $\partial\omega\in C^2$, $u|_\omega\in H^2_0(\omega)$. In particular, $u|_{\partial\omega}=0$. 
However, since $u$ satisfies \eqref{eq:main} with $\varphi(x)\neq 0$ for some $x\in \partial\omega$, we arrive at a contradiction.

Next, we consider the case when $\omega$ consists of multiple components. Let $\omega_c\subset\omega$ be any component such that $\overline{\omega_c} \cap \overline{\omega \backslash \omega_c} = \emptyset$. Denote by $u_c\in H^2_{loc}(\Omega)$ the solution satisfying  \[-\nabla \cdot (D\nabla u_c) + \mu u_c = \varphi \chi_{\omega_c}.\]
Note that $u_c$ may not be unique.
Set $u=\sum u_c$. We will prove that $u|_{\partial\Omega}=\partial_\nu u|_{\partial\Omega}=0$ if and only if the individual boundary value functions vanish, i.e. $u_c|_{\partial\Omega}=\partial_\nu u_c|_{\partial\Omega}=0$ for each $c$. Indeed, consider a component $\omega_c$ of $\omega$, and let $u_c\in H^2_{loc}(\Omega)$ satisfy 
\[-\nabla \cdot (D\nabla u_c) + \mu u_c = \varphi \chi_{\omega_c},\quad u_c|_{\partial\Omega} = \partial_\nu u_c|_{\partial\Omega} = 0.\] By linearity, taking the sum of all the equations of $u_c$, we have that
\[-\nabla \cdot (D\nabla u) + \mu u = \varphi \chi_{\omega}, \quad u|_{\partial\Omega} = \partial_\nu u|_{\partial\Omega} = 0.\] 
Conversely, if $u$ is such that 
\[-\nabla \cdot (D\nabla u) + \mu u = \varphi \chi_{\omega}, \quad u|_{\partial\Omega} = \partial_\nu u|_{\partial\Omega} = 0,\] 
then by the unique continuation principle (since $\omega$ and $\Omega$ have sufficiently smooth boundaries), $u=0$ in $\Omega\backslash\bar{\omega}$, so in particular, the restriction of $u$ to the subset $\omega_c$ is such that $u|_{\omega_c}\in H^2_0(\omega_c)$.

Let 
\[\tilde{u} = \begin{cases}
    u &\quad \text{ in }\omega_c,\\
    0 &\quad \text{ in }\Omega\backslash\omega_c.
\end{cases}\]
Then, since $u|_{\omega_c}\in H^2_0(\omega_c)$ and $-\nabla \cdot (D\nabla u) + \mu u = \varphi \chi_{\omega_c}$ in a neighbourhood of $\overline{\omega_c}$ (which does not intersect $\omega\backslash\omega_c$, we see that 
\[-\nabla \cdot (D\nabla \tilde{u}) + \mu \tilde{u} = \varphi \chi_{\omega_c} \quad\text{ in }\Omega, \quad \quad \tilde{u}|_{\partial\Omega} = \partial_\nu \tilde{u}|_{\partial\Omega} = 0.\] 
Therefore $\tilde{u}$ and $u_c$ solve the same elliptic Dirichlet problem, whose solution is known to be unique, so $u_c=\tilde{u}$ and the boundary value function vanishes for the component $c$.

Therefore, we can consider each component $\omega_c$ individually and obtain the desired result.

\end{proof}

\begin{proof}[Proof of Theorem \ref{thm:mainSmooth}]
    Suppose on the contrary that $q\in\mathcal{A}$ is not unique, i.e. there exists two solutions $q=\varphi\chi_{\omega}$ and $\hat{q}=\hat{\varphi}\chi_{\hat{\omega}}$ to the BLT problem such that  $\varphi,\hat{\varphi}\in L^2(\Omega)$, and either $\varphi(x)\neq0$ for some $x\in\partial\omega \backslash \partial\hat{\omega}$ or $\hat{\varphi}(x')\neq0$ for some $x'\in\partial\hat{\omega} \backslash \partial\omega$. Without loss of generality, we take the second case. 
    As in the previous proof, we first consider the case when $\omega$ and $\hat{\omega}$ have a single component. Since $q$ and $\hat{q}$ solve the BLT problem, their respective solutions $u$ and $\hat{u}$ satisfy 
    \[u=\hat{u},\quad \partial_\nu u = \partial_\nu \hat{u} \quad \text{ on }\partial\Omega.\] 
    In particular, $\tilde{u}:=u-\hat{u}$ is such that $\tilde{u}|_{\partial\Omega} = \partial_\nu\tilde{u}|_{\partial\Omega} =0$. Furthermore, $\tilde{u}$ solves 
    \[-\nabla \cdot (D\nabla \tilde{u}) + \mu \tilde{u} = 0 \quad \text{ in }\Omega\backslash(\overline{\omega} \cup \overline{\hat{\omega}}).\] Observe that $\Omega\backslash(\omega\cup\hat{\omega})$ is connected, since it is the intersection of the connected subsets $\Omega\backslash \omega$ and $\Omega\backslash\hat{\omega}$ for the open bounded subsets $\omega,\hat{\omega}\Subset\Omega$. 
    By the unique continuation principle for elliptic equations, we have that $\tilde{u}=0$ in $\Omega\backslash(\omega\cup\hat{\omega})$, and so $\tilde{u}\in H^2_0(\Omega\backslash(\omega\cup\hat{\omega}))$ by the $C^2$ regularity of the boundary $\partial \omega\cup \partial\hat{\omega}$. 

    Now consider $\partial(\Omega\backslash(\omega\cup\hat{\omega}))$. Since $\tilde{u}=0$ there, $\tilde{u}$ satisfies 
    \begin{equation}\label{eq:SmoothPf1}
    0 = -\nabla \cdot (D\nabla \tilde{u}) + \mu \tilde{u} = \varphi\chi_{\omega} - \hat{\varphi}\chi_{\hat{\omega}}\quad \text{ on } \partial(\Omega\backslash(\omega\cup\hat{\omega})).
    \end{equation}
    In particular, for some  $x'\in \partial\hat{\omega}\backslash\partial\omega \subset \partial(\Omega\backslash(\omega\cup\hat{\omega}))$, $\varphi(x')=0$ since $x'\not\in\omega$ but $\hat{\varphi}(x')\neq0$ by assumption. This contradicts \eqref{eq:SmoothPf1}.
    Because by assumption, there exists an $x\in \partial(\Omega\backslash(\omega\cup\hat{\omega}))$ such that either $\varphi(x)\neq0$ or $\hat{\varphi}(x)\neq0$. Therefore, it must be that $\partial\omega = \partial\hat{\omega}$, i.e.  
    \[\omega = \hat{\omega}.\]
    Then, \eqref{eq:SmoothPf1} can be rewritten as 
    \begin{equation*}
    0 = -\nabla \cdot (D\nabla \tilde{u}) + \mu \tilde{u} = (\varphi - \hat{\varphi})\chi_{\omega} \quad \text{ on } \partial(\Omega\backslash \omega),
    \end{equation*}
    from which we can conclude \[\varphi = \hat{\varphi} \text{ on }\partial\omega.\]

    The case where $\omega$ has multiple components follows similarly as in the proof of the previous result.
\end{proof}

In the case where the light intensity $\varphi$ is constant, we have the following corollary.

\begin{corollary}
    Let $\Omega$ be an open bounded set in $\mathbb{R}^n$, $n=2,3$, with $C^2$ boundary. Suppose $q\in\mathcal{A}$ is a solution to the BLT problem \eqref{eq:main}--\eqref{eq:main1}, such that $q=\varphi\chi_{\omega}$ is such that $\varphi$ is constant. Then $q$ is uniquely determined by a single boundary measurement $g$, in the sense that the smooth domain $\omega$ and the constant light intensity $\varphi$ are uniquely determined.
\end{corollary}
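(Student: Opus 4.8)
The plan is to obtain this as an immediate consequence of Theorem~\ref{thm:mainSmooth}. Suppose, for contradiction, that $q=\varphi\chi_\omega$ and $\hat q=\hat\varphi\chi_{\hat\omega}$ are two admissible solutions of \eqref{eq:main}--\eqref{eq:main1} giving rise to the same boundary measurement $g$, with $\varphi$ and $\hat\varphi$ constant. Since $q,\hat q\in\mathcal A$, Theorem~\ref{thm:mainSmooth} applies verbatim and yields both the equality of the domains, $\omega=\hat\omega$, and the equality of the intensities along the common interface, $\varphi\equiv\hat\varphi$ on $\partial\omega$. This already contains all of the analytic content — unique continuation across the connected complement $\Omega\backslash(\omega\cup\hat\omega)$ and the $H^2_0$ regularity near the $C^2$ interfaces — so nothing new has to be proved about the PDE itself.

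It then remains only to upgrade ``equal on $\partial\omega$'' to ``equal as constants''. First I would note that admissibility forces $\omega$ to be a nonempty bounded open set (indeed $q\not\equiv 0$ on $\partial\omega$ already forces $\varphi\neq 0$ and $\partial\omega\neq\emptyset$), so that fixing any point $x_0\in\partial\omega$ gives $\varphi=\varphi(x_0)=\hat\varphi(x_0)=\hat\varphi$ as real numbers. Consequently $q=\varphi\chi_\omega=\hat\varphi\chi_{\hat\omega}=\hat q$ throughout $\Omega$, i.e.\ the pair $(\omega,\varphi)$ is uniquely determined by $g$, which is the assertion. If one reads ``constant'' in the weaker sense of constant on each connected component of $\omega$, the same reasoning applies componentwise: Theorem~\ref{thm:mainSmooth} already treats the multi-component case and delivers the agreement of each component $\omega_c=\hat\omega_c$ together with $\varphi\equiv\hat\varphi$ on each $\partial\omega_c$, and restricting the per-component constants to a point of $\partial\omega_c$ pins down each one.

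I do not expect any genuine obstacle here: the corollary is essentially a bookkeeping remark built on top of Theorem~\ref{thm:mainSmooth}. The one small point worth spelling out — and the only place the constancy hypothesis is actually used — is the trivial fact that a constant function is determined by its value at a single point of the nonempty set $\partial\omega$, which is exactly what converts ``boundary knowledge of $\varphi$'' into ``full knowledge of $\varphi$ on all of $\omega$''. The substantive difficulty, namely localising the jump of the source to $\partial\omega$ via unique continuation together with the interface regularity $u|_\omega\in H^2_0(\omega)$, was already dispatched in the proof of the main theorem.
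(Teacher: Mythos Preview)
Your proposal is correct and follows essentially the same approach as the paper: invoke Theorem~\ref{thm:mainSmooth} to obtain $\omega=\hat\omega$ and $\varphi=\hat\varphi$ on $\partial\omega$, then use constancy to extend this to all of $\omega$. The paper's proof is just a one-line version of what you wrote.
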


\begin{proof}
    This follows simply from the Theorem \ref{thm:mainSmooth}, because we can uniquely determine $\varphi$ from its value on $\partial\omega$ in the case where $\varphi$ is constant.
\end{proof}

Moreover, we have the result for multiple embedded domains, in the case where $q$ is piecewise constant. Such domains are defined as follows:
\begin{definition}\label{def:nest}
    We say that $\omega$ has a \emph{nest partition} if there exists 
    \[\omega_N\Subset \omega_{N-1} \Subset \cdots \Subset \omega_2 \Subset \omega_1 = \omega \Subset \Omega \subset \mathbb{R}^n,\] where $\omega_\ell$, $\ell = 1, 2, \dots, N$, $N\in\mathbb{N}$, is such that each $\omega_\ell$ is an open bounded $C^2$ domain with $\Omega\backslash\omega_{\ell}$ connected. An example of such a nest partition is in Figure \ref{fig:NestPart}.
\end{definition}
\begin{figure}
    \centering
    \includegraphics[width=0.28\textwidth]{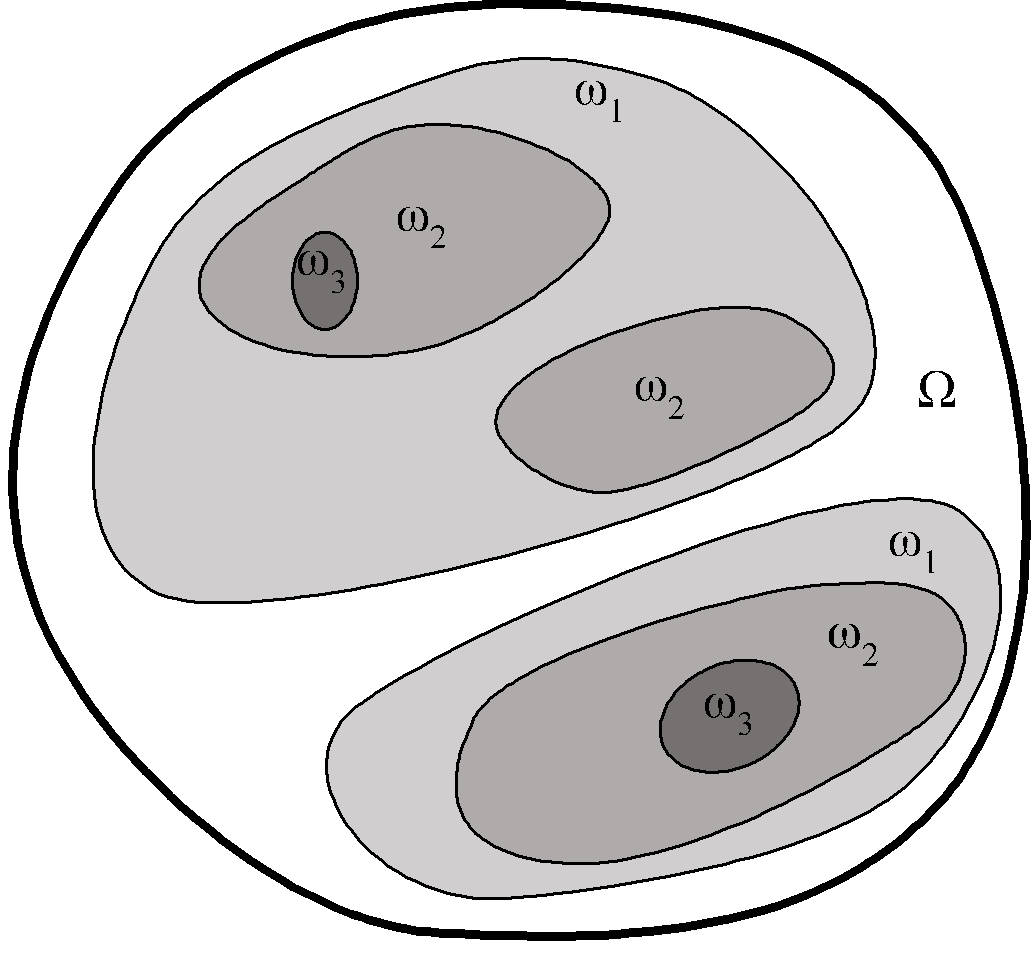}
    \caption{An example of a nest partition}
    \label{fig:NestPart}
\end{figure}

We assume that $q$ in \eqref{eq:main}--\eqref{eq:main1} fulfills the admissibility condition below. 
\begin{definition}
    Let $\Omega$ be an open bounded set in $\mathbb{R}^n$, $n=2,3$, such that $\partial\Omega\in C^2$. We say that $q$ is admissible and write $q\in \mathcal{B}$ if $q$ is of the form $q=\sum_{j=1}^m \varphi_j\chi_{\omega_j}$ for the nest partition (see Figure \ref{fig:NestPart})  
    \[\omega_m\Subset \omega_{m-1} \Subset \cdots \Subset \omega_2 \Subset \omega_1 = \omega, \] such that $\varphi_j$ is constant for each $j$. Without loss of generality, we assume that $\varphi_1\neq0$ and $\varphi_j\neq \varphi_{j+1}$ for each $j$.
\end{definition}

\begin{remark}
    We remark that the assumptions on $q$ simply mean that it is piecewise constant. 
    
    This includes the case in Figure \ref{fig:NestPart}, where $\omega_1$ has two components, $\omega_2$ has three components, and $\omega_3$ has two components, as long as the compactness assumption in Definition \ref{def:nest} is satisfied.
    
    This also includes the case where $\omega$ is a hollow ball as given in Theorem \ref{KnownThm}(ii), by taking $m=2$, $\omega_1=\bigcup_{j=1,\dots,J} B_{R_j}(x_j) \triangleq  B_{R}(x)$ and $\omega_2=B_r(x)$ and $\varphi_2 = 0$ for the balls $B_{s}(y)$ for radius $s$ and centre $y$, with $R=(R_1,\dots,R_J)$, $r=(r_1,\dots,r_J)$, $x=(x_1,\dots,x_J)$.
\end{remark}

\begin{corollary}\label{cor:SmoothEmbed}
    Let $\Omega$ be an open bounded set in $\mathbb{R}^n$, $n=2,3$, with $C^2$ boundary. Suppose  $q\in\mathcal{B}$ is a solution to the BLT problem \eqref{eq:main}--\eqref{eq:main1}. Then $q$ is uniquely determined by a single boundary measurement $g$, in the sense that the smooth domains $\omega_j$ and the constant light intensities $\varphi_j$ are uniquely determined for all $j=1,\dots,m$.
\end{corollary}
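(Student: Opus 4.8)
The plan is to bootstrap from the single-component, constant-intensity case --- the base case $m=1$ being precisely the Corollary to Theorem~\ref{thm:mainSmooth} --- and then to \emph{peel off the nested interfaces one at a time, from the outermost surface $\partial\omega_1$ inward}, each peeling step being a repetition of the unique continuation argument behind Theorem~\ref{thm:mainSmooth}. Formally this is an induction on the number of layers $m$. Suppose $q=\sum_{j=1}^m\varphi_j\chi_{\omega_j}\in\mathcal B$ and $\hat q=\sum_{j=1}^{\hat m}\hat\varphi_j\chi_{\hat\omega_j}\in\mathcal B$ both solve \eqref{eq:main}--\eqref{eq:main1} for the same measurement $g$, with associated solutions $u,\hat u$; then $\tilde u:=u-\hat u\in H^2_{loc}(\Omega)$ has vanishing Cauchy data on $\partial\Omega$ and solves $-\nabla\cdot(D\nabla\tilde u)+\mu\tilde u=q-\hat q$ in $\Omega$. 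Exactly as in Theorem~\ref{thm:auxSmooth}, the multi-component situation reduces to the single-component one, so it suffices to follow the outer components of $\omega_1$ and $\hat\omega_1$ one at a time.

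First I would treat the outermost layer. Since each inner layer is compactly contained in the preceding one, in a neighbourhood of $\partial\omega_1$ the source $q$ reduces to the constant $\varphi_1\neq0$ (and $\hat q$ to $\hat\varphi_1\neq0$ near $\partial\hat\omega_1$); in particular $q\not\equiv0$ on $\partial\omega_1$. Outside $\overline{\omega_1\cup\hat\omega_1}$ both sources vanish, so $\tilde u$ solves the homogeneous equation there; since $\Omega\backslash\omega_1$ and $\Omega\backslash\hat\omega_1$ are connected, so is $\Omega\backslash(\omega_1\cup\hat\omega_1)$, and running the argument of Theorem~\ref{thm:mainSmooth} on the pair $(\omega_1,\hat\omega_1)$ gives first $\omega_1=\hat\omega_1$ and then $\varphi_1=\hat\varphi_1$. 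Consequently $\tilde u\equiv0$ on $\Omega\backslash\omega_1$, and as $\partial\omega_1\in C^2$ and $\tilde u\in H^2_{loc}(\Omega)$, the trace and normal derivative of $\tilde u$ on $\partial\omega_1$ vanish as well.

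Next I would iterate inward. On the shell $\omega_1\backslash\overline{\omega_2\cup\hat\omega_2}$ one has $q=\varphi_1=\hat\varphi_1=\hat q$, so $\tilde u$ again solves the homogeneous equation there; it has vanishing Cauchy data on $\partial\omega_1$, a piece of that shell's boundary, and the shell is connected. Unique continuation forces $\tilde u\equiv0$ on the shell, and the interface part of the argument of Theorem~\ref{thm:mainSmooth} --- now carried out with $\Omega$ replaced by $\omega_1$ and the vanishing Cauchy data on $\partial\omega_1$ in place of $g$, applied to the pair $(\omega_2,\hat\omega_2)$, and using that $q$ jumps across $\partial\omega_2$, which is exactly what the normalisation $\varphi_1\neq0$, $\varphi_j\neq\varphi_{j+1}$ in the definition of $\mathcal B$ guarantees --- yields $\omega_2=\hat\omega_2$, $\varphi_2=\hat\varphi_2$, and vanishing Cauchy data of $\tilde u$ on $\partial\omega_2$. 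Repeating, one obtains $\omega_j=\hat\omega_j$ and $\varphi_j=\hat\varphi_j$ for all $j\le\min(m,\hat m)$; and if, say, $m<\hat m$, applying the interface argument once more across the surviving interface $\partial\hat\omega_{m+1}\Subset\omega_m$ --- across which $\hat q$ jumps but $q$, being constant on $\omega_m$, does not --- yields a contradiction, so $m=\hat m$. Hence $q=\hat q$, which with the multi-component reduction of Theorem~\ref{thm:auxSmooth} proves the Corollary.

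I expect the real work to lie in the geometric bookkeeping rather than in any new analytic ingredient. Three points need care at every level: (a) that the annular region $\omega_{\ell-1}\backslash(\omega_\ell\cup\hat\omega_\ell)$ is connected, so that unique continuation carries the vanishing of $\tilde u$ from $\partial\omega_{\ell-1}$ all the way to $\partial\omega_\ell\cup\partial\hat\omega_\ell$ --- here the connectedness hypotheses of Definition~\ref{def:nest} enter, applied to each component; (b) that the vanishing Cauchy data transfers across each $C^2$ interface to serve as boundary data for the next level, which uses the interior $H^2$ regularity of $\tilde u$ across $\partial\omega_{\ell-1}$; and (c) that the piecewise-constant source genuinely jumps across every interface being recovered, which is precisely why $\mathcal B$ is normalised as it is. The main obstacle, such as it is, is therefore organising the induction so that at each stage one is literally in the hypotheses of (the proof of) Theorem~\ref{thm:mainSmooth}, with $\Omega$ replaced by $\omega_{\ell-1}$ and $g$ replaced by $\tilde u|_{\partial\omega_{\ell-1}}=0$, while carrying along the component decomposition of each $\omega_\ell$.
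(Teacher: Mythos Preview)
Your proposal is correct and follows essentially the same route as the paper: set $\tilde u=u-\hat u$, peel off the nested interfaces by induction using unique continuation and the interface argument from Theorem~\ref{thm:mainSmooth}, conclude $\omega_j=\hat\omega_j$ and $\varphi_j=\hat\varphi_j$ at each level, and finish by ruling out $m\neq\hat m$. The one organisational difference is that the paper, at step $\ell+1$, applies unique continuation directly from $\partial\Omega$ into all of $\Omega\setminus(\overline{\omega_{\ell+1}}\cup\overline{\hat\omega_{\ell+1}})$ (which is legitimate since $q-\hat q$ vanishes there once the first $\ell$ layers match), whereas you propagate the vanishing Cauchy data through each interface $\partial\omega_{\ell-1}$ and work inside $\omega_{\ell-1}$; the paper's version sidesteps your point~(a) about connectedness of the shell $\omega_{\ell-1}\setminus(\omega_\ell\cup\hat\omega_\ell)$ and the Cauchy-data transfer in your point~(b), but the two are equivalent in substance.
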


\begin{proof}
    Once again, we will show by contradiction. Assume on the contrary that $q,\hat{q}\in\mathcal{B}$ satisfy the BLT problem with the given assumptions, with the form $q=\sum_{j=1}^m \varphi_j\chi_{\omega_j}$ and $\hat{q}=\sum_{j=1}^M\hat{\varphi}_j\chi_{\hat{\omega}_j}$. Then, since $q,\hat{q}\in L^2(\Omega)$ with $q|_{\partial\omega_1}=\varphi_1,\hat{q}|_{\partial\hat{\omega}_1}=\hat{\varphi}_1$ such that $\varphi_1,\hat{\varphi}_1\neq0$ by assumption, using Theorem \ref{thm:mainSmooth}, we have that $\omega_1 = \hat{\omega}_1$ and $\varphi_1 = \hat{\varphi}_1\neq0$ on $\partial \omega_1$. Since $\varphi_1,\hat{\varphi}_1$ are constants by the assumption $q\in\mathcal{B}$, we have that $\varphi_1 = \hat{\varphi}_1\neq0$ on $\omega_1\backslash\overline{\omega}_2$.

    We will show that $\omega_j=\hat{\omega}_j$ and $\varphi_j=\hat{\varphi}_j$ for every $j=1,\dots,m$ by induction. Suppose this holds for $j=1,\dots,\ell$. Then, for the solutions $u$ and $\hat{u}$ corresponding to $q,\hat{q}$ respectively, their difference $\tilde{u}:=u-\hat{u}$ is such that $\tilde{u}|_{\partial\Omega} = \partial_\nu\tilde{u}|_{\partial\Omega} =0$, and solves 
    \[-\nabla \cdot (D\nabla \tilde{u}) + \mu \tilde{u} = 0 \quad \text{ in }\Omega\backslash(\overline{\omega_{\ell+1}} \cup \overline{\hat{\omega}_{\ell+1}}).\] 
    By the unique continuation principle and the connectedness of $\Omega\backslash(\omega_{\ell+1} \cup \hat{\omega}_{\ell+1})$, we have that $\tilde{u}=0$ in $\Omega\backslash(\omega_{\ell+1} \cup \hat{\omega}_{\ell+1})$, and so $\tilde{u}\in H^2_0(\Omega\backslash(\omega_{\ell+1} \cup \hat{\omega}_{\ell+1}))$ by the $C^2$-regularity of the boundary. 

    Then, as before, consider $\partial(\Omega\backslash(\omega_{\ell+1} \cup \hat{\omega}_{\ell+1}))$. Since $\tilde{u}=0$ there, $\tilde{u}$ satisfies 
    \[0 = -\nabla \cdot (D\nabla \tilde{u}) + \mu \tilde{u} = \varphi_{\ell+1}\chi_{\omega_{\ell+1}} - \hat{\varphi}_{\ell+1}\chi_{\hat{\omega}_{\ell+1}}\quad \text{ on } \partial(\Omega\backslash(\omega_{\ell+1} \cup \hat{\omega}_{\ell+1})).\] 
    This implies that
    \[\omega_{\ell+1} = \hat{\omega}_{\ell+1} \quad \text{ and } \quad \varphi_{\ell+1} = \hat{\varphi}_{\ell+1} \quad \text{ on }\partial\omega_{\ell+1}.\] Since $\varphi_{\ell+1},\hat{\varphi}_{\ell+1}$ are constants, this holds in $\omega_{\ell+1}\backslash\overline{\omega}_{\ell+2}$. Repeating inductively, we have the result for $j=1,\dots$ and $m=M$.
    
    Finally, the case where $\omega$ and $\hat{\omega}$ have multiple components follows similarly as in the proof of Theorem \ref{thm:auxSmooth}.
\end{proof}

\section{Polyhedral Domains}\label{sect:polygon}

Next, we consider the case when the domain is less smooth, such that $\omega$ is a polyhedral-shaped bounded Lipschitz domain, such that $\Omega\backslash\bar{\omega}$ is connected. We assume that $D$ and $\mu$ are such that 
\begin{equation}\label{def:k}k:= - \frac{\Delta \sqrt{D}}{\sqrt{D}} - \frac{\mu}{D} \in H^{s,\tilde{p}}\end{equation} and 
\begin{equation}\label{def:kcond}\norm{kf}_{H^{s,\tilde{p}}} \leq C \norm{f}_{H^{s,p}}\end{equation}
for some $1<\tilde{p}<2$ satisfying
\begin{equation}\label{def:ptilde}\frac{2}{n+1}+\frac{1}{p} \leq \frac{1}{\tilde{p}} < \frac{2}{n} + \min\left\{\frac{1}{p},\frac{s}{n}\right\}.\end{equation} 
Here, $H^{s,p}$ is the Bessel potential space, defined for a given $s\in\mathbb{R}$ and $p\geq 1$ by
\[H^{s,p}:= \{f\in L^p(\mathbb{R}^n) : \mathcal{F}^{-1}((1+|\xi|^2)^{s/2}\mathcal{F}f) \in L^p(\mathbb{R}^n)\},\] 
where $\mathcal{F}$ and $\mathcal{F}^{-1}$ denote the Fourier transform and its inverse respectively. 

Examples of choices for $s,p,\tilde{p}$ include $1<\tilde{p}<2$ with $s=0$ and $p>3$, or $s=1$ and $p>\frac{6}{5}$ in $\mathbb{R}^2$, or $s=0$, $p>6$, $\frac{3}{2}<\tilde{p}<2$ or $s=1$, $p>2$, $1<\tilde{p}<2$ in $\mathbb{R}^3$, as given in \cite[p.~419]{CakoniXiao2021CommPDE-CornerScatteringAnisotropic}.

Then, we define the admissibility sets for $q$ as follows:
\begin{definition}
    Let $\Omega$ be an open bounded set in $\mathbb{R}^n$, $n=2,3$. We say that $q\in L^2(\Omega)$ is admissible and write $q\in \mathcal{C}$ if $q$ is of the form $q=\varphi\chi_{\omega}$ for the convex polygon or polyhedron $\omega\Subset\Omega$ with corners $x_c$ such that $q$ is $C^\gamma$ H\"older continuous for some $\gamma\in(0,1)$.
\end{definition}

Another structure we are considering is the corona shape.

\begin{definition}\label{def:Corona}
    Let $\tilde{\omega}$ be a convex bounded Lipschitz domain with a connected complement $\Omega\backslash\overline{\tilde{\omega}}$. We say that the set $\omega\Subset\Omega$ belongs to the class of \emph{corona shapes}, denoted by $\mathcal{D}$, if there exist finitely many protruding strictly convex conic cones or polyhedrons $\mathcal{S}^j$, $j=1,2,\dots,\ell$, $\ell\in\mathbb{N}$, i.e. the apexes $x_c^j\in \mathbb{R}^n\backslash\overline{\tilde{\omega}}$ and 
    \[\partial (\mathcal{S}^j\backslash\overline{\tilde{\omega}}) \backslash \partial \mathcal{S}^j \subset \partial \overline{\tilde{\omega}}\quad \text{ and }\quad \cap_{j=1}^\ell \partial (\mathcal{S}^j\backslash\overline{\tilde{\omega}}) \backslash \partial \mathcal{S}^j = \emptyset.\] An example of such a corona shape is in Figure \ref{fig:Corona}.
\end{definition}
\begin{figure}
    \centering
    \includegraphics[width=0.25\textwidth]{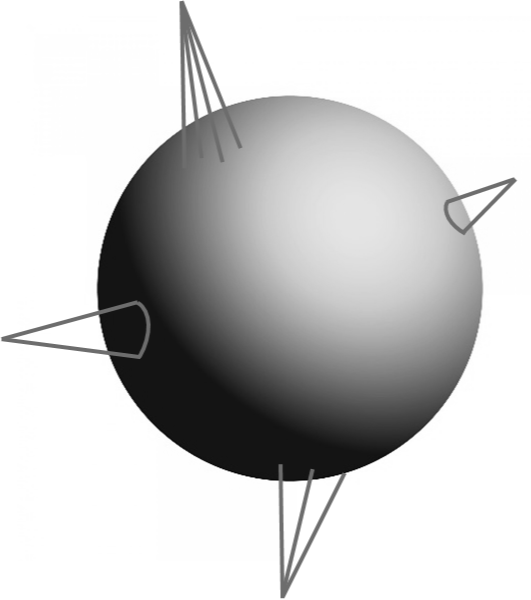}
    \caption{An example of a corona shape}
    \label{fig:Corona}
\end{figure}

\begin{definition}
    Let $\Omega$ be an open bounded set in $\mathbb{R}^n$, $n=2,3$. We say that $q\in L^2(\Omega)$ is admissible and write $q\in \mathcal{E}$ if $q$ is $C^\gamma$ H\"older continuous for some $\gamma\in(0,1)$ and of the form $q=\varphi\chi_{\omega}$ for the corona shape $\omega\in\mathcal{D}$ with corners $x_c$ (see Figure \ref{fig:CornerEmbed}) such that $\omega\Subset\Omega$ and 
    \[\omega = \bigcup_{j=1}^\ell \mathcal{S}^j \cup \tilde{\omega}\] for $\tilde{\omega}$ fixed and $\mathcal{S}^j$ uniquely determined by its apex $x_c$ (i.e. the opening angle $\theta_c$ or edges $e_1,\dots,e_\ell$ depend only on the apex $x_c$).
\end{definition}

For $q$ in $\mathcal{C}$ or $\mathcal{E}$, we have the following main results.

\begin{theorem}\label{thm:mainPolygon}
    Let $\Omega$ be an open bounded set in $\mathbb{R}^n$, $n=2,3$. Consider the BLT problem \eqref{eq:main}--\eqref{eq:main1} satisfying \eqref{def:k} and \eqref{def:kcond}, and suppose that $q\in\mathcal{C}$ is a solution. Then $q$ is uniquely determined by a single boundary measurement $g$, in the sense that the polyhedron $\omega$ (see Figure \ref{fig:CornerSingle}(a)) and the light intensity $\varphi(x_c)$ are uniquely determined for the corners $x_c$ of $\omega$.
\end{theorem}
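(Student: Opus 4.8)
The plan is to argue by contradiction, mirroring the structure of the proof of Theorem \ref{thm:mainSmooth} but replacing the role of the unique continuation/Cauchy-data argument near a smooth boundary point with a \emph{microlocal corner analysis}. Suppose there were two admissible solutions $q=\varphi\chi_\omega$ and $\hat q=\hat\varphi\chi_{\hat\omega}$ with $\omega\ne\hat\omega$ (up to their corner data). As in the smooth case, the measurements coincide, so the difference $\tilde u := u-\hat u$ has vanishing Cauchy data on $\partial\Omega$ and solves $-\nabla\cdot(D\nabla\tilde u)+\mu\tilde u = \varphi\chi_\omega-\hat\varphi\chi_{\hat\omega}$. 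By the unique continuation principle and connectedness of $\Omega\setminus(\overline\omega\cup\overline{\hat\omega})$, we get $\tilde u=0$ on $\Omega\setminus(\overline\omega\cup\overline{\hat\omega})$. Now, if $\omega\ne\hat\omega$, one can locate a corner point $x_c$ of one of the polyhedra, say a corner of $\omega$, lying on $\partial(\Omega\setminus(\overline\omega\cup\overline{\hat\omega}))$, such that in a small neighbourhood $B$ of $x_c$ the other domain $\hat\omega$ either does not reach $x_c$ as a corner, or $B\cap\omega$ is a genuine conic sector while $\tilde u\equiv 0$ on the complementary part of $B$.

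The core step is then the \emph{corner singularity} argument. After the Liouville-type substitution $v=\sqrt{D}\,\tilde u$ (this is why the hypotheses \eqref{def:k}–\eqref{def:ptilde} are imposed: they turn the divergence-form operator into $\Delta + k$ with $k$ of the required Bessel-potential regularity and multiplier bound), $v$ solves $\Delta v + k v = f$ near $x_c$, where $f$ is, near $x_c$, the restriction of $\varphi\chi_\omega$ (transported by $\sqrt D$), i.e. a $C^\gamma$ function supported exactly in a convex conic sector with apex $x_c$, and $v$ vanishes on a full neighbourhood-minus-sector. The standard technique—used e.g. in corner-scattering and transmission-eigenvalue problems—is to test the equation against a suitable family of \emph{complex geometric optics} (CGO) exponential solutions $e^{\rho\cdot x}$, $\rho\in\mathbb{C}^n$, $\rho\cdot\rho=0$, $|\rho|\to\infty$, and integrate over a small ball $B$. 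The vanishing of $v$ on $B\setminus\overline\omega$ kills the boundary contributions from that part, while the dominant term is $\int_{B\cap\omega} \varphi(x)\sqrt{D(x)}\, e^{\rho\cdot x}\,dx$; choosing $\rho$ pointing into the "illuminated" direction of the convex cone, a Laplace-type asymptotic expansion shows this integral is, to leading order, $c\,\varphi(x_c)\sqrt{D(x_c)}\,\rho^{-n}\,e^{\rho\cdot x_c}(1+o(1))$ with $c\ne 0$ determined by the solid opening angle, whereas the remaining terms ($\int k v\, e^{\rho\cdot x}$, the lower-order part of $f$ coming from $\hat q$, and the interior/boundary error terms) are of strictly smaller order by the $H^{s,\tilde p}$-multiplier estimate and the Hölder continuity of $\varphi$ at $x_c$. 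Dividing by $e^{\rho\cdot x_c}\rho^{-n}$ and letting $|\rho|\to\infty$ forces $\varphi(x_c)=0$, contradicting admissibility of $q$ (which requires $\varphi\ne 0$ at the corners). Hence $\omega=\hat\omega$, and then, reusing the argument on the equation $(\varphi-\hat\varphi)\chi_\omega$ near each shared corner, we conclude $\varphi(x_c)=\hat\varphi(x_c)$ at every corner.

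For the corona case this is packaged separately (Theorem for $\mathcal E$), but the same corner-by-corner CGO extraction applies at each protruding apex $x_c^j$, using that the $\mathcal S^j$ are strictly convex conic cones whose geometry is pinned by the apex; the disjointness condition $\cap_j \partial(\mathcal S^j\setminus\overline{\tilde\omega})\setminus\partial\mathcal S^j=\emptyset$ guarantees that near each apex the configuration is again "sector versus zero-set," so the recovery of $\varphi(x_c^j)$ and of the cones — and hence of $\omega$ — goes through.

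The main obstacle I expect is the precise quantitative asymptotics of the oscillatory/Laplace integral $\int_{B\cap\omega}\varphi\sqrt D\, e^{\rho\cdot x}\,dx$ over a general convex polyhedral cone in $\mathbb{R}^3$, together with controlling the remainder $\int_B k v\, e^{\rho\cdot x}$: one must choose the CGO parameter $\rho$ so that $\mathrm{Re}(\rho\cdot(x-x_c))<0$ throughout the cone (possible exactly by strict convexity of the cone), extract the leading term with a nonvanishing constant depending only on the opening angle, and then show — via the interpolation/embedding inequalities built into the range \eqref{def:ptilde} and the admissibility bound \eqref{def:kcond} — that every other term decays faster. This is the technical heart, and it is where the seemingly ad hoc hypotheses on $s,p,\tilde p,k$ are actually used.
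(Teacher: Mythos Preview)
Your proposal is correct and follows essentially the same route as the paper: contradiction, unique continuation into the complement of $\omega\cup\hat\omega$, then CGO testing at an exposed corner after the Liouville substitution $v=\sqrt{D}\,u$ to reduce to $\Delta+k$, with the leading Laplace-type asymptotics over the convex cone extracting $\varphi(x_c)$. The only difference is organisational: the paper isolates the corner step as a standalone auxiliary result (Theorem~\ref{AuxThm}, fed by the CGO construction and decay estimates of Lemma~\ref{CGOAsympLemma}) and applies it to the pair $(u,\hat u)$ with matching lateral Cauchy data rather than to the difference $\tilde u$, but the content is the same.
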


\begin{theorem}\label{thm:mainCorona}
    Let $\Omega$ be an open bounded set in $\mathbb{R}^n$, $n=2,3$. Consider the BLT problem \eqref{eq:main}--\eqref{eq:main1} satisfying \eqref{def:k} and \eqref{def:kcond}, and suppose that $q\in\mathcal{E}$ is a solution. Then $q$ is uniquely determined by a single boundary measurement $g$, in the sense that the corona shape $\omega$ (see Figure \ref{fig:CornerSingle}(b)) and the light intensity $\varphi(x_c)$ are uniquely determined for the corners $x_c$ of $\omega$.
\end{theorem}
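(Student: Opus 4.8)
The proof follows the contradiction scheme used for Theorem~\ref{thm:mainSmooth}, with the unique-continuation-across-a-smooth-interface step replaced by a microlocal analysis at a corner, exactly along the lines of the proof of Theorem~\ref{thm:mainPolygon}. Suppose there are two admissible solutions $q=\varphi\chi_\omega$ and $\hat q=\hat\varphi\chi_{\hat\omega}$ in $\mathcal E$ producing the same boundary datum $g$, with photon fluxes $u$ and $\hat u$. Combining \eqref{eq:main1} with the Robin condition in \eqref{eq:main} shows that $\tilde u:=u-\hat u$ has vanishing Cauchy data $\tilde u|_{\partial\Omega}=\partial_\nu\tilde u|_{\partial\Omega}=0$ and solves $-\nabla\cdot(D\nabla\tilde u)+\mu\tilde u=\varphi\chi_\omega-\hat\varphi\chi_{\hat\omega}$ in $\Omega$.

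First I would perform the Liouville substitution $\tilde v:=\sqrt{D}\,\tilde u$, which conjugates $-\nabla\cdot(D\nabla\cdot)+\mu$ into the Schr\"odinger-type operator $-(\Delta+k)$ with $k$ as in \eqref{def:k}; conditions \eqref{def:kcond}--\eqref{def:ptilde} are precisely what guarantee that $k$ acts boundedly on the relevant Bessel potential spaces, so that the CGO remainders appearing below stay under control (cf.~\cite{CakoniXiao2021CommPDE-CornerScatteringAnisotropic}). Thus $\Delta\tilde v+k\tilde v=\tilde f$ in $\Omega$, where $\tilde f$ is a nonvanishing multiple of $\varphi\chi_\omega-\hat\varphi\chi_{\hat\omega}$ and $\tilde v|_{\partial\Omega}=\partial_\nu\tilde v|_{\partial\Omega}=0$. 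Since $\Omega\setminus(\overline\omega\cup\overline{\hat\omega})$ is connected (for the same reason as in the proof of Theorem~\ref{thm:mainSmooth}) and meets $\partial\Omega$, the unique continuation principle \cite{KochTataru2001UCPElliptic} gives $\tilde v\equiv0$ there.

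Next I would exploit the corona structure to isolate an \emph{exposed corner}. By Definition~\ref{def:Corona} and the admissibility in $\mathcal E$, $\omega$ and $\hat\omega$ share the same fixed convex core $\tilde\omega$ and each protruding piece is uniquely determined by its apex; hence if $\omega\ne\hat\omega$ there is an apex $x_c$ that is a corner of exactly one of the two domains, say of $\omega$, whose tangent cone is the proper strictly convex cone $\mathcal S^j-x_c$. The conditions $x_c^j\in\mathbb R^n\setminus\overline{\tilde\omega}$ and $\cap_j\partial(\mathcal S^j\setminus\overline{\tilde\omega})\setminus\partial\mathcal S^j=\emptyset$ furnish a ball $B=B(x_c,\rho)$ with $B\cap\hat\omega=\emptyset$ and $B\cap\omega=B\cap\mathcal S^j$. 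On $B$ the equation reads $\Delta\tilde v+k\tilde v=-\varphi\chi_{\mathcal S^j}/\sqrt{D}$ with $\tilde v$ vanishing on $B\setminus\mathcal S^j$, which is exactly the corner configuration treated in Theorem~\ref{thm:mainPolygon}: one tests against CGO solutions $v_\tau=e^{\tau(\hat\rho+i\hat\rho^\perp)\cdot(x-x_c)}(1+r_\tau)$ of the conjugated equation, with $\hat\rho,\hat\rho^\perp$ unit orthogonal vectors chosen so that $\hat\rho\cdot(x-x_c)<0$ on $\mathcal S^j\setminus\{x_c\}$ and $r_\tau\to0$, applies Green's identity on $B\cap\mathcal S^j$, discards the boundary contributions (which vanish on the flat faces through the Cauchy data of $\tilde v$ and are exponentially small on $\mathcal S^j\cap\partial B$), and, using $\varphi\in C^\gamma$, extracts the leading term $\varphi(x_c)\int_{\mathcal S^j-x_c}e^{\tau(\hat\rho+i\hat\rho^\perp)\cdot y}\,dy$. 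After rescaling this cone integral remains bounded away from zero while the left-hand side tends to $0$, forcing $\varphi(x_c)=0$ and contradicting admissibility. Hence $\omega=\hat\omega$, and running the same corner argument on $(\varphi-\hat\varphi)\chi_\omega$ at each apex $x_c$ of $\omega$ yields $\varphi(x_c)=\hat\varphi(x_c)$. Finitely many protruding pieces, and the case of several components, are handled one apex at a time as in the proof of Theorem~\ref{thm:auxSmooth}.

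The hard part is this last, microlocal, step, and it is delicate for two reasons. Analytically, the leading coefficient $D$ is variable, so the CGO construction must be carried out after the Liouville reduction for the low-regularity potential $k$; the precise exponent window \eqref{def:ptilde} is what makes the remainder $r_\tau$ and the $k\tilde v$ term negligible compared with the corner contribution. Geometrically, the argument needs the tangent cone of $\omega$ at the exposed apex to be \emph{proper} (contained in an open half-space), so that some $\hat\rho$ gives genuine exponential decay inside the cone and the cone integral above does not degenerate; this is exactly what the strict convexity of the protruding cones $\mathcal S^j$, together with the apex-disjointness requirements of Definition~\ref{def:Corona}, are designed to ensure.
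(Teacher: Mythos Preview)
Your proof is correct and follows essentially the same route as the paper's: argue by contradiction with two admissible corona sources giving the same Cauchy data, use unique continuation to propagate $\tilde u=u-\hat u=0$ from $\partial\Omega$ into the exterior of $\omega\cup\hat\omega$, locate an exposed apex of the symmetric difference, and apply the CGO corner analysis (the paper's Theorem~\ref{AuxThm}) there to force the source values to agree, yielding $\omega=\hat\omega$ and then $\varphi(x_c)=\hat\varphi(x_c)$. The paper's version is considerably terser---it just says the argument ``follows easily as in the polyhedral case'' and invokes Theorem~\ref{AuxThm}---whereas you unpack the Liouville reduction $\tilde v=\sqrt{D}\,\tilde u$ and the CGO asymptotics explicitly; but the underlying mechanism is identical.
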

\begin{figure}
    \centering
    \includegraphics[width=0.28\textwidth]{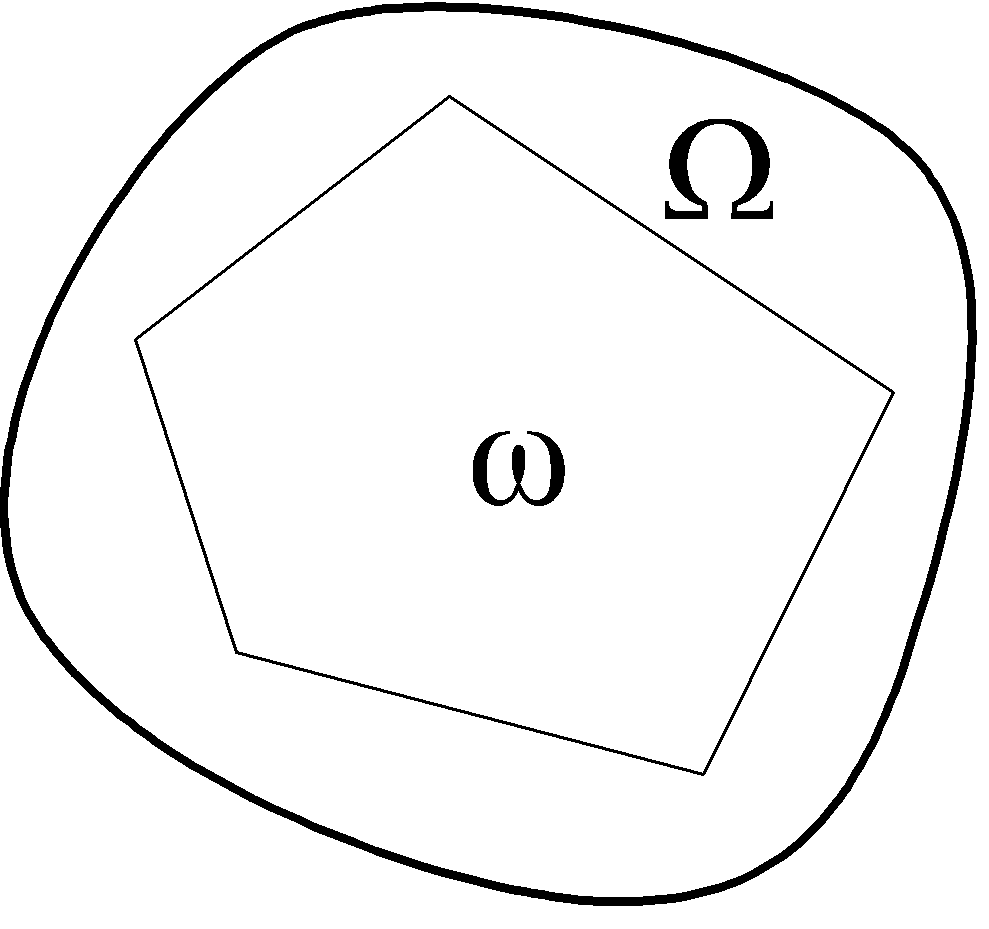}\hspace{2em}\includegraphics[width=0.28\textwidth]{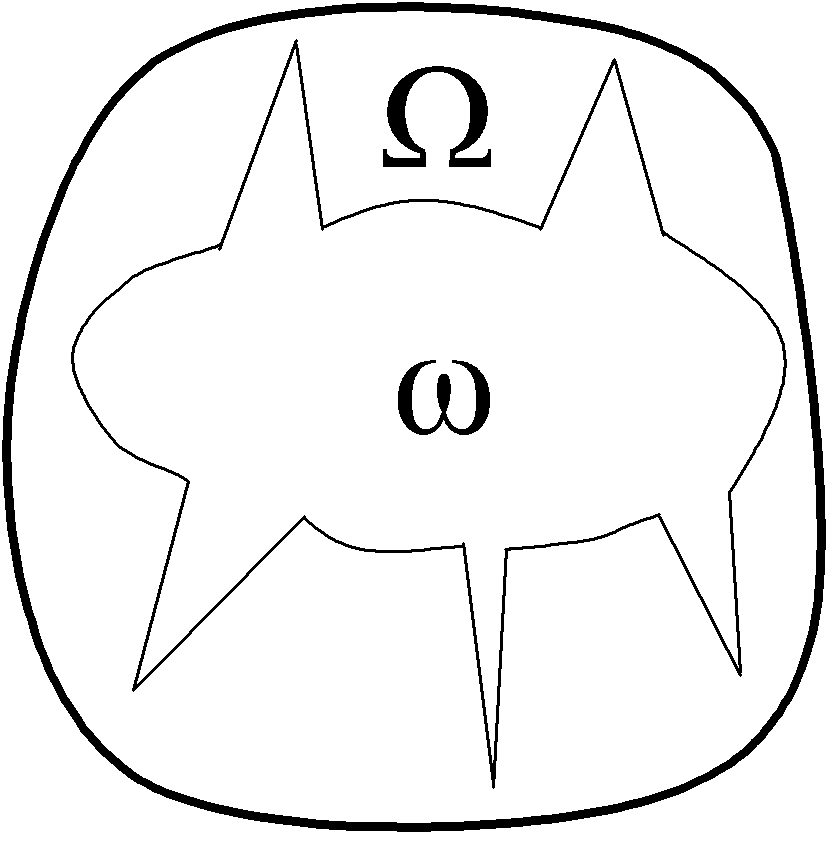}
    \caption{(a) Admissible polygons, (b) Admissible corona shapes}
    \label{fig:CornerSingle}
\end{figure}

\begin{remark}
    We remark that in this case, it does not make sense to consider multiple components, since we are uniquely determining $\omega$ by its corners $x_c$. In the case of the convex polygon/polyhedron, i.e. when $\omega\in\mathcal{C}$, $\omega$ is then given by joining up the corners $x_c$, which will be uniquely determined, using edges, such that $\omega$ is convex. In the case of the corona shape, i.e. when $\omega\in\mathcal{E}$, the convex subset $\tilde{\omega}\subset\omega$ is fixed and only the protruding convex cones or polyhedrons $\mathcal{S}^j$ will be determined.
\end{remark}

Correspondingly, we also have the uniqueness result in the case of embedded polyhedrons or coronas.
\begin{definition}
    We say that $\omega\in\mathcal{F}$ has a \emph{polygonal-nest or polyhedral-nest or corona-nest partition} if there exists 
    \[\omega_N\Subset \omega_{N-1} \Subset \cdots \Subset \omega_2 \Subset \omega_1 = \omega \Subset \Omega \subset \mathbb{R}^n,\] where $\omega_\ell$, $\ell = 1, 2, \dots, N$, $N\in\mathbb{N}$, is such that each $\omega_\ell$ is an open convex simply-connected polygon/polyhedron or corona. An example of a polygonal-nest or polyhedral-nest partition is given in Figure \ref{fig:CornerEmbed}(a), and an example of a corona-nest partition is given in Figure \ref{fig:CornerEmbed}(b).
\end{definition}
    
\begin{figure}
    \centering
    \includegraphics[width=0.28\textwidth]{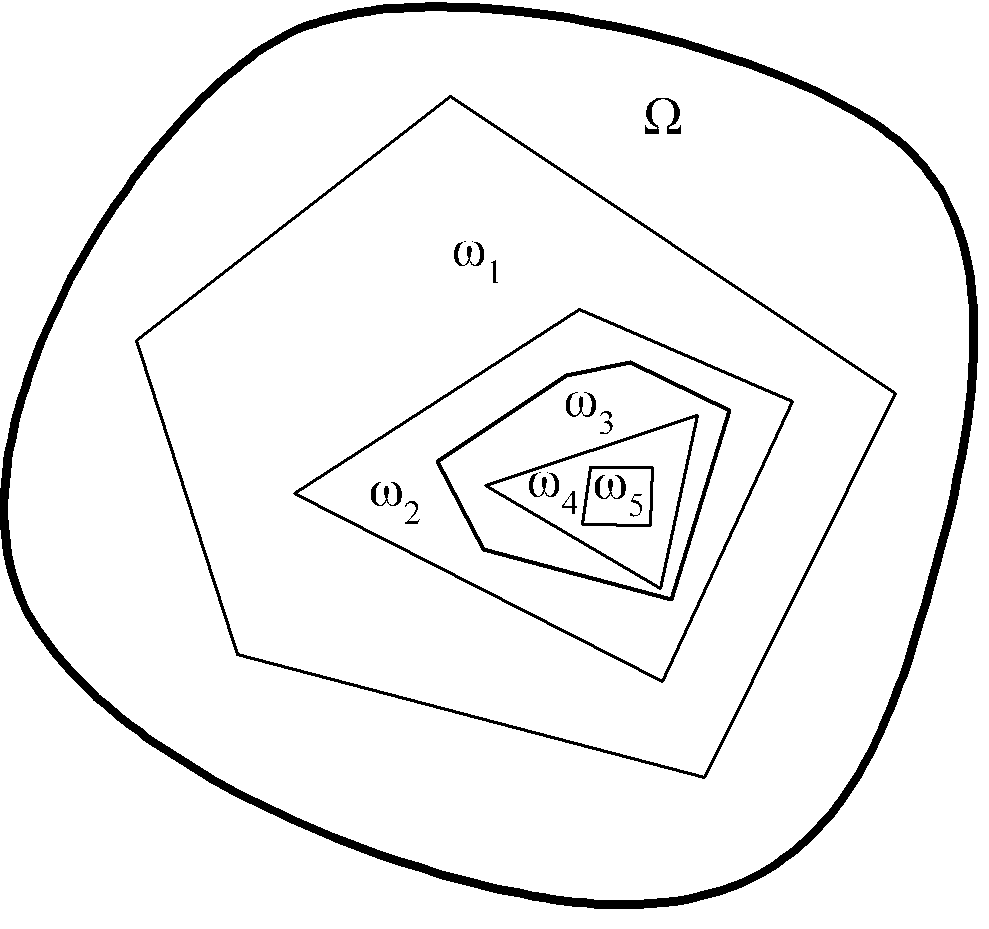}\hspace{1em}\includegraphics[width=0.28\textwidth]{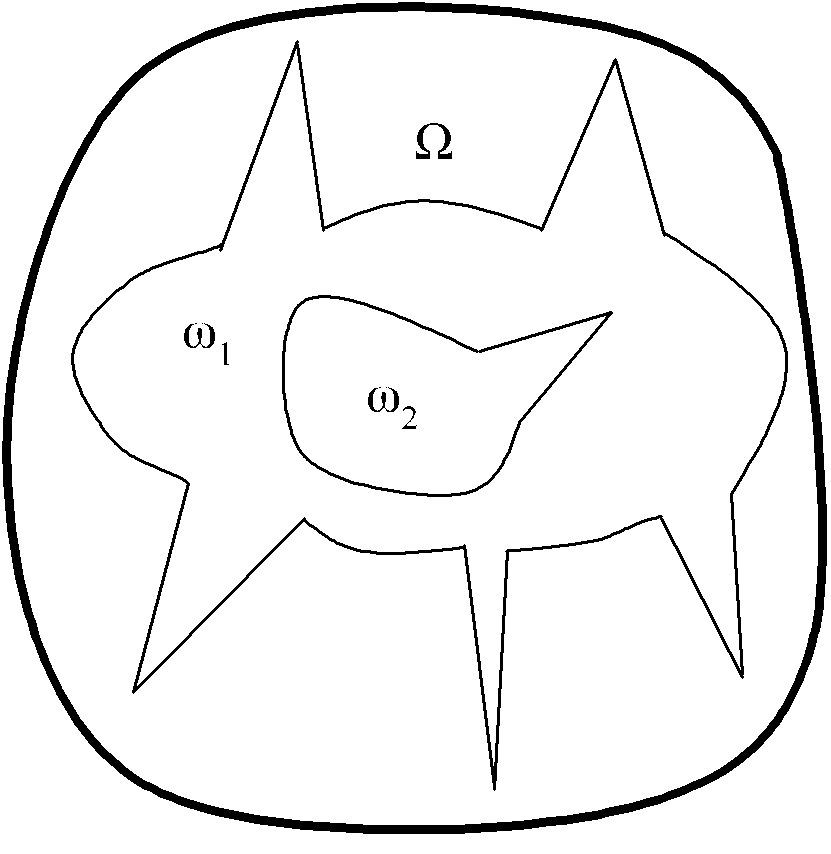}
    \caption{(a) Admissible polygonal-nest or polyhedral-nest partition, (b) Admissible corona-nest partition}
    \label{fig:CornerEmbed}
\end{figure}

The admissibility condition on $q$ is subsequently defined as follows.
\begin{definition}
    Let $\Omega$ be an open bounded set in $\mathbb{R}^n$, $n=2,3$. We say that $q$ is admissible and write $q\in \mathcal{G}$ if $q$ is of the form $q=\sum_{j=1}^m \varphi_j\chi_{\omega_j}$ for the partition $\omega\in\mathcal{F}$ (see Figure \ref{fig:CornerEmbed}) with 
    \[\omega_m\Subset \omega_{m-1} \Subset \cdots \Subset \omega_2 \Subset \omega_1 = \omega, \] such that $w_j\in\mathcal{C}$ or $\mathcal{E}$ with corners $x_c$ and $\varphi_j$ is constant for each $j$. Without loss of generality, we assume that $\varphi_1\neq0$ and $\varphi_j\neq \varphi_{j+1}$ for each $j$.
\end{definition}

As a corollary, we have the following result

\begin{corollary}\label{cor:PolyEmbed}
Let $\Omega$ be an open bounded set in $\mathbb{R}^n$, $n=2,3$. Consider the BLT problem \eqref{eq:main}--\eqref{eq:main1} satisfying \eqref{def:k} and \eqref{def:kcond}, and suppose that $q\in\mathcal{G}$ is a solution. Then $q$ is uniquely determined by a single boundary measurement $g$, in the sense that the polyhedral or corona shape domains $\omega_j$ and the constant light intensities $\varphi_j$ are uniquely determined for all $j=1,\dots,m$.
\end{corollary}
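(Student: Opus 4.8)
The plan is to argue by contradiction and to peel off the nested layers $\omega_1,\omega_2,\dots,\omega_m$ from the outside inward, invoking at each stage the corner-singularity analysis that drives the proofs of Theorems~\ref{thm:mainPolygon} and~\ref{thm:mainCorona}, in the same spirit as the proof of Corollary~\ref{cor:SmoothEmbed}. Suppose $q=\sum_{j=1}^m\varphi_j\chi_{\omega_j}$ and $\hat q=\sum_{j=1}^M\hat\varphi_j\chi_{\hat\omega_j}$ both belong to $\mathcal{G}$ and yield the same boundary measurement $g$. Writing $u,\hat u$ for the associated solutions of~\eqref{eq:main}, we have $u=\hat u$ and $\partial_\nu u=\partial_\nu\hat u$ on $\partial\Omega$, so $\tilde u:=u-\hat u$ has vanishing Cauchy data on $\partial\Omega$ and solves $-\nabla\cdot(D\nabla\tilde u)+\mu\tilde u=q-\hat q$ in $\Omega$. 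It is convenient to recall that membership in $\mathcal{G}$ makes $q$ piecewise constant with a genuine jump across each interface $\partial\omega_j$, so that every corner (or apex, in the corona case) of every layer carries a nonzero jump of the source.

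\textbf{Base step.} Since $\omega_2\Subset\omega_1$ and $\hat\omega_2\Subset\hat\omega_1$, in a neighbourhood of $\partial\omega_1$ (resp.\ $\partial\hat\omega_1$) the inner layers play no role, so there $q=\varphi_1\chi_{\omega_1}$ and $\hat q=\hat\varphi_1\chi_{\hat\omega_1}$ with $\varphi_1,\hat\varphi_1$ nonzero. The corner/apex-singularity contradiction underlying Theorem~\ref{thm:mainPolygon} (when $\omega_1$ is a convex polygon/polyhedron) or Theorem~\ref{thm:mainCorona} (when $\omega_1$ is a corona shape), applied to $\tilde u$ with its vanishing Cauchy data and to the localised source $q-\hat q$, forces $\omega_1=\hat\omega_1$ and $\varphi_1=\hat\varphi_1$ at the corners, hence $\varphi_1=\hat\varphi_1$ throughout $\omega_1\setminus\overline{\omega_2}$ because $\varphi_1$ is constant.

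\textbf{Inductive step.} Assume $\omega_j=\hat\omega_j$ and $\varphi_j=\hat\varphi_j$ for $j=1,\dots,\ell$. Then $q=\hat q$ on $\Omega\setminus(\overline{\omega_{\ell+1}}\cup\overline{\hat\omega_{\ell+1}})$, so $\tilde u$ solves $-\nabla\cdot(D\nabla\tilde u)+\mu\tilde u=0$ there with vanishing Cauchy data on $\partial\Omega$; by the unique continuation principle \cite{KochTataru2001UCPElliptic} together with the connectedness of $\Omega\setminus(\omega_{\ell+1}\cup\hat\omega_{\ell+1})$ (which follows from the nested convex/corona structure encoded in the class $\mathcal{F}$), we obtain $\tilde u\equiv0$ on $\Omega\setminus(\omega_{\ell+1}\cup\hat\omega_{\ell+1})$. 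Suppose $\omega_{\ell+1}\neq\hat\omega_{\ell+1}$ — this also covers the case where only one of $q,\hat q$ retains an $(\ell+1)$-st layer, which is what eventually forces $m=M$. Pick a corner (or apex) $x_c$ of, say, $\omega_{\ell+1}$ with $x_c\notin\overline{\hat\omega_{\ell+1}}$. Because $x_c\in\omega_\ell$ and $\omega_{\ell+2}\Subset\omega_{\ell+1}$, $\hat\omega_{\ell+2}\Subset\hat\omega_{\ell+1}$, on a small ball about $x_c$ all the inner layers and all the already-matched outer layers contribute only a locally constant, hence smooth, background, so that $q-\hat q=c\,\chi_{\omega_{\ell+1}}$ near $x_c$ with $c\neq0$ the jump of $q$ across $\partial\omega_{\ell+1}$, while $\tilde u$ vanishes near $x_c$ outside $\omega_{\ell+1}$. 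This is exactly the configuration excluded by the corner analysis in Theorems~\ref{thm:mainPolygon} and~\ref{thm:mainCorona}, a contradiction; hence $\omega_{\ell+1}=\hat\omega_{\ell+1}$. Evaluating $-\nabla\cdot(D\nabla\tilde u)+\mu\tilde u$ on $\partial(\Omega\setminus(\omega_{\ell+1}\cup\hat\omega_{\ell+1}))$, where $\tilde u\equiv0$, then yields $\varphi_{\ell+1}=\hat\varphi_{\ell+1}$ at the corners, hence on $\omega_{\ell+1}\setminus\overline{\omega_{\ell+2}}$ by constancy. Iterating down to $j=m=M$ completes the argument, and the case of several components per layer is reduced to the single-component case exactly as in the proof of Theorem~\ref{thm:auxSmooth}, by freezing one component at a time.

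\textbf{The main obstacle.} The delicate point is the localisation in the inductive step: one must verify that, once the outer $\ell$ layers have been matched, a small neighbourhood of a corner $x_c$ of $\omega_{\ell+1}$ meets neither the inner layers nor the competing layer $\hat\omega_{\ell+1}$, so that the single-inclusion corner-scattering machinery of Theorems~\ref{thm:mainPolygon} and~\ref{thm:mainCorona} applies verbatim with the already-matched outer data absorbed into a harmless smooth term. This hinges on the choice $x_c\notin\overline{\hat\omega_{\ell+1}}$ together with the strict nesting $\omega_{\ell+2}\Subset\omega_{\ell+1}$ and $\hat\omega_{\ell+2}\Subset\hat\omega_{\ell+1}$, and — for corona shapes — on the fact that each protruding cone $\mathcal{S}^j$ is determined by its apex alone, so that matching the apexes forces matching of the whole layer. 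A secondary technical point is the connectedness of $\Omega\setminus(\omega_{\ell+1}\cup\hat\omega_{\ell+1})$, which is precisely where the nested convex/corona structure built into $\mathcal{F}$ is genuinely used.
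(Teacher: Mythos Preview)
Your proof is correct and follows essentially the same strategy as the paper's: peel off the nested layers by induction, use unique continuation to propagate the vanishing Cauchy data of $\tilde u$ inward past the already-matched layers, and then invoke the corner-singularity machinery (Theorem~\ref{AuxThm}, which underlies Theorems~\ref{thm:mainPolygon} and~\ref{thm:mainCorona}) at the $(\ell+1)$-st interface to force both the domain and the intensity to match. The paper is terser---it applies Theorem~\ref{AuxThm} directly at each corner rather than spelling out the contradiction by choosing a corner $x_c$ of $\omega_{\ell+1}$ outside $\overline{\hat\omega_{\ell+1}}$---but the underlying logic, including the localisation so that only the $(\ell+1)$-st jump is seen near $x_c$, is the same.
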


\subsection{Geometrical Setup}

For $n=2,3$, consider the convex conic cone $\mathcal{C}_{x_c,\theta_c}\subset\Omega$ with apex $x_c$ and axis $v_c$ and opening angle $2\theta_c\in(0,\pi)$, defined by
\[\mathcal{C}_{x_c,\theta_c}:=\{y\in\Omega:0\leq\angle (y-x_c,v_c)\leq\theta_c,\theta_c\in(0,\pi/2)\}.\] Define the truncated conic cone by
\[\mathcal{C}^h_{x_c,\theta_c}:=\mathcal{C}_{x_c,\theta_c}\cap B_h,\] where $B_h:=B_h(x_c)$ is an open ball contained in $\Omega$ centred at $x_c$ with radius $h>0$. Observe that both $\mathcal{C}_{x_c,\theta_c}$ and $\mathcal{C}^h_{x_c,\theta_c}$ are Lipschitz domains. 

We also introduce the polyhedral corner in $\mathbb{R}^3$ as follows: Let $\mathcal{K}_{x_c;e_1,\dots,e_\ell}$ be a polyhedral cone with apex $x_c$ and edges $e_i$, $i=1,\dots,\ell$, $\ell\geq3$, where $e_i$ are mutually linearly independent vectors in $\mathbb{R}^3$. Assume that $\mathcal{K}_{x_c;e_1,\dots,e_\ell}$ is strictly convex, so that it can be fitted into a conic cone $\mathcal{C}_{x_c,\theta_c}$. Given a constant $h>0$, define the truncated polyhedral corner 
\[\mathcal{K}_{x_c;e_1,\dots,e_\ell}^h := \mathcal{K}_{x_c;e_1,\dots,e_\ell} \cap B_h.\]

Throughout the rest of the paper, the results hold for both these two types of corners, and we denote 
\[\mathcal{S}_h: = \mathcal{C}_{x_c,\theta_c} \text{ or }\mathcal{K}_{x_c;e_1,\dots,e_\ell}\]
 
We also recall some asymptotics for a CGO solution we will be using. 
\begin{lemma}\label{CGOAsympLemma}
Suppose $p$ is such that $p>1+\frac{2}{n-1}$ and $\frac{n}{p}<\frac{2}{n+1}+s$. Let $\tau\in\mathbb{R}^+$. Then there exists a solution $w$ to the equation  
\begin{equation}\label{CGOEq}- \nabla \cdot (D\nabla w(x)) + \mu w(x) = 0\quad \text{ in }\mathbb{R}^n,\end{equation}
of the form 
\begin{equation}\label{CGO}w = \frac{1}{\sqrt{D}} e^{-\tau (\xi + i\xi^\perp)\cdot (x-x_c)}(1+r(x))\end{equation} 
such that $\xi\cdot\xi^\perp=0$, $\xi,\xi^\perp\in\mathbb{S}^{n-1}$, and 
\begin{equation}\label{CGOSolnEst}\norm{r}_{H^{s,p}} = \mathcal{O}(\tau^{n(1/\tilde{p}-1/p)-2}),\end{equation} for $\tilde{p}$ defined in \eqref{def:ptilde}. 

Furthermore, there exists a positive number $\rho$ depending on the truncated (with constant $h$) cone $\mathcal{S}_h$ satisfying 
\begin{equation}\label{CGOCond} \xi\cdot\widehat{(x-x_c)}\geq \rho > 0\quad\text{ for all } x\in \mathcal{S}_h,\end{equation} where $\hat{x} = \frac{x}{|x|}$. Moreover, for sufficiently large $\tau$, 
\begin{equation}\label{CGOEst0}\left|\int_{\mathcal{S}_h}w\right|\geq \frac{C_{\mathcal{S}_h}}{\tau^{n-1}} -\mathcal{O}\left(\frac{1}{\tau}e^{-\frac{1}{2}\rho h \tau}\right),\end{equation}
\begin{equation}\label{CGOEst1}\norm{w}_{L^2(\mathcal{S}_h)} \lesssim \left(1+\tau^{-\frac{2}{3}}\right)e^{-\rho h \tau} \text{ in }\mathbb{R}^2, \quad \norm{w}_{L^2(\mathcal{S}_h)} \lesssim \left(1+\tau^{-\frac{2}{5}}\right)e^{-\rho h \tau} \text{ in }\mathbb{R}^3,\end{equation}
\begin{equation}\label{CGOEst2}\norm{\nabla w}_{L^2(\mathcal{S}_h)} \lesssim (1+\tau)\left(1+\tau^{-\frac{2}{3}}\right)e^{-\rho h \tau} \text{ in }\mathbb{R}^2, \quad\norm{\nabla w}_{L^2(\mathcal{S}_h)} \lesssim (1+\tau)\left(1+\tau^{-\frac{2}{5}}\right)e^{-\rho h \tau} \text{ in }\mathbb{R}^3,\end{equation}
and 
\begin{equation}\label{CGOEst3}\begin{aligned}\left|\int_{\mathcal{S}_h}|x|^\alpha w\right|\lesssim \tau^{-(\alpha+\frac{29}{12})}+\tau^{-(\alpha+2)}+\frac{1}{\tau}e^{-\frac{1}{2}\rho h \tau} \text{ in }\mathbb{R}^2, \\ \left|\int_{\mathcal{S}_h}|x|^\alpha w\right|\lesssim \tau^{-(\alpha+\frac{121}{40})}+\tau^{-(\alpha+3)}+\frac{1}{\tau}e^{-\frac{1}{2}\rho h \tau} \text{ in }\mathbb{R}^3\end{aligned}\end{equation}
for all $0<\alpha<1$.
Here, we use the symbol `` $\lesssim$" to denote that the inequality holds up to a constant which is independent of $\tau$.
\end{lemma}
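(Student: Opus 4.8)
The plan is to construct $w$ as a complex geometric optics (CGO) solution via the classical Faddeev-type approach, then extract the integral and norm asymptotics over the truncated cone $\mathcal{S}_h$ by direct estimation of the leading exponential factor together with control of the remainder $r$. First I would perform the Liouville-type substitution $w = \frac{1}{\sqrt{D}}v$, which transforms $-\nabla\cdot(D\nabla w)+\mu w = 0$ into a Schr\"odinger equation $\Delta v + k v = 0$ with $k = -\frac{\Delta\sqrt D}{\sqrt D} - \frac{\mu}{D} \in H^{s,\tilde p}$ as in \eqref{def:k}; this uses the hypothesis that $D$ is sufficiently regular. Then I seek $v = e^{-\tau\zeta\cdot(x-x_c)}(1+r)$ with the complex vector $\zeta = \xi + i\xi^\perp$, $\xi\cdot\xi^\perp = 0$, $|\xi|=|\xi^\perp|=1$, so that $\zeta\cdot\zeta = 0$ and the conjugated operator $e^{\tau\zeta\cdot x}\Delta e^{-\tau\zeta\cdot x}$ has a bounded inverse (the Faddeev Green operator $G_\tau$) between appropriate Bessel potential spaces with operator norm $\mathcal{O}(\tau^{-1})$; then $r$ solves $r = -G_\tau(k(1+r))$, which is solved by Neumann series for large $\tau$ using \eqref{def:kcond}, giving $\norm{r}_{H^{s,p}} = \mathcal{O}(\tau^{n(1/\tilde p - 1/p)-2})$ as in \eqref{CGOSolnEst}. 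The admissibility conditions on $p,s,\tilde p$ in \eqref{def:ptilde} and the lemma's hypotheses $p>1+\frac{2}{n-1}$, $\frac{n}{p}<\frac{2}{n+1}+s$ are exactly what is needed for the mapping properties and for the Sobolev embedding $H^{s,p}\hookrightarrow L^\infty$ (or $C^0$) so that $r$ is a bounded continuous function with small sup-norm. I would cite \cite{CakoniXiao2021CommPDE-CornerScatteringAnisotropic} for the detailed construction.

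Next I would fix the geometry: since $\mathcal{S}_h$ is a truncated strictly convex cone (or polyhedral corner) with apex $x_c$, for a suitable choice of $\xi$ (pointing into the cone's axis direction) there is $\rho>0$ with $\xi\cdot\widehat{(x-x_c)}\geq\rho$ for all $x\in\mathcal{S}_h$, which is \eqref{CGOCond}; this is a standard compactness/convexity fact about the solid angle subtended by $\mathcal{S}_h$. With this in hand, translating $x_c$ to the origin, $|w| \lesssim \frac{1}{\sqrt D}e^{-\tau\xi\cdot x}(1+|r|)$ and $e^{-\tau\xi\cdot x} = e^{-\tau|x|\,\xi\cdot\hat x}\leq e^{-\rho\tau|x|}$ on $\mathcal{S}_h$. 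The $L^2$ bounds \eqref{CGOEst1} follow from $\norm{w}_{L^2(\mathcal{S}_h)}\lesssim \norm{e^{-\tau\xi\cdot x}}_{L^2(\mathcal{S}_h)}(1+\norm{r}_{L^\infty})$; I would split into $|x|\leq h/2$, where $\int e^{-2\rho\tau|x|}\,dx$ is estimated in polar coordinates yielding a factor $\tau^{-n}$, and $|x|>h/2$, giving the exponentially small $e^{-\rho h\tau}$ tail, and then absorb the $\tau$-powers into the stated $(1+\tau^{-2/3})$ resp.\ $(1+\tau^{-2/5})$ factors using \eqref{CGOSolnEst}. The gradient bound \eqref{CGOEst2} is analogous, with $\nabla w$ producing an extra factor $\tau$ from differentiating the exponential plus a $\nabla r$ term controlled by $\norm{r}_{H^{s,p}}$ (with a derivative loss absorbed by the same exponent bookkeeping), explaining the extra $(1+\tau)$. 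For the lower bound \eqref{CGOEst0}, I would write $\int_{\mathcal{S}_h}w = \int_{\mathcal{S}_h}\frac{1}{\sqrt D}e^{-\tau\zeta\cdot x}\,dx + \int_{\mathcal{S}_h}\frac{1}{\sqrt D}e^{-\tau\zeta\cdot x}r\,dx$; the main term, after extending the cone to the full infinite cone $\mathcal{S}_\infty$ (at the cost of the exponentially small error $\mathcal{O}(\tau^{-1}e^{-\frac12\rho h\tau})$) and using homogeneity of the cone, scales like $\tau^{-n}\int_{\mathcal{S}_\infty}\frac{1}{\sqrt D}e^{-\zeta\cdot y}\,dy$, but since $\frac{1}{\sqrt D(x_c)}$ is a nonzero constant and the cone is strictly convex this integral is a nonzero constant $C_{\mathcal{S}_h}$ — actually the scaling gives $\tau^{-(n-1)}$ once one accounts correctly for the apex contribution (integrating along the $n-1$ transverse directions the oscillation/decay in $\xi^\perp$ contributes only $\tau^{-(n-1)}$), which is the claimed rate; the remainder term is then bounded by $\norm{r}_{L^\infty}\norm{e^{-\tau\xi\cdot x}}_{L^1(\mathcal{S}_h)}\lesssim \tau^{n(1/\tilde p-1/p)-2}\cdot\tau^{-n}$, which \eqref{def:ptilde} forces to be $o(\tau^{-(n-1)})$, hence absorbable. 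Finally \eqref{CGOEst3} is obtained in the same way but with the extra weight $|x|^\alpha$: on the main term this replaces $\tau^{-(n-1)}$ by $\tau^{-(n-1+\alpha)}$ via the scaling $x = y/\tau$, and the remainder contributes the fractional powers $\tau^{-(\alpha+29/12)}$ (in $\mathbb{R}^2$) and $\tau^{-(\alpha+121/40)}$ (in $\mathbb{R}^3$) coming from combining $\norm{r}_{H^{s,p}}$ with a H\"older/Sobolev estimate on $\int |x|^\alpha e^{-\tau\xi\cdot x}\,dx$ under the explicit admissible choices of $(s,p,\tilde p)$ listed after \eqref{def:ptilde}.

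The main obstacle I expect is \textbf{the sharp bookkeeping of the $\tau$-exponents}: getting the precise rate $\tau^{-(n-1)}$ (rather than $\tau^{-n}$) in \eqref{CGOEst0} and the exact fractional exponents $\frac{29}{12}$, $\frac{121}{40}$ in \eqref{CGOEst3} requires carefully tracking how the remainder estimate \eqref{CGOSolnEst} interacts with the Sobolev embedding constants and with the integration of $|x|^\alpha e^{-\tau\xi\cdot x}$ over the cone, so that every error term is genuinely dominated by the main term for large $\tau$; this is where the seemingly technical inequality chain \eqref{def:ptilde} and the specific $(s,p,\tilde p)$ triples do the real work. By contrast, the CGO construction itself and the geometric fact \eqref{CGOCond} are essentially standard once the Liouville reduction is in place, so I would present them briefly and concentrate the effort on the estimates over $\mathcal{S}_h$.
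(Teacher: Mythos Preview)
Your overall strategy matches the paper's proof exactly: the Liouville substitution $v=\sqrt{D}\,w$ reducing to $\Delta v+kv=0$, the CGO ansatz $v=e^{-\tau\zeta\cdot(x-x_c)}(1+r)$ with $\zeta=\xi+i\xi^\perp$, the convexity argument for \eqref{CGOCond}, and then the cone estimates. The paper, however, does not carry out the estimates \eqref{CGOEst0}--\eqref{CGOEst3} directly; it simply cites Proposition~3.1 of \cite{CakoniXiao2021CommPDE-CornerScatteringAnisotropic} for the CGO construction and \eqref{CGOSolnEst}, and Propositions~3.1, 4.1 and Lemmas~3.4, 4.2 of \cite{DiaoFeiLiu-2022-EM-Shape-Corners2} for the integral bounds, specialising to $s=1$ and explicit values of $\tilde p$ ($\tilde p=48/38$ in $\mathbb{R}^2$, $\tilde p=120/79$ in $\mathbb{R}^3$) to produce the exponents $29/12$ and $121/40$. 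So where you attempt a self-contained derivation, the paper treats the lemma as a compilation of known results.

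One point in your sketch is not convincing and you should either fix it or, like the paper, defer to the cited source. Your justification that the main term in \eqref{CGOEst0} scales like $\tau^{-(n-1)}$ ``once one accounts correctly for the apex contribution (integrating along the $n-1$ transverse directions\ldots)'' is not how the Laplace transform over a solid $n$-dimensional cone behaves: the straightforward substitution $y=\tau(x-x_c)$ in $\int_{\mathcal{S}_h}e^{-\tau\zeta\cdot(x-x_c)}\,dx$ gives a factor $\tau^{-n}$, not $\tau^{-(n-1)}$, and there is no stationary-phase mechanism here to recover a power. Either the stated exponent in \eqref{CGOEst0} comes from a different normalisation in \cite{DiaoFeiLiu-2022-EM-Shape-Corners2} (which you should check), or it is a slip in the lemma as written; in either case your ``transverse directions'' heuristic does not supply a valid argument. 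Similarly, your reading of \eqref{CGOEst1}--\eqref{CGOEst2} as $L^2$ norms over the full solid $\mathcal{S}_h$ cannot give exponential decay $e^{-\rho h\tau}$, since near the apex $|w|\sim 1$; these bounds are only used in the paper on the spherical cap $\partial\mathcal{S}_h\cap\partial B_h$ (where $|x-x_c|=h$ forces the exponential), so the estimates should be understood there, and your splitting into $|x|\le h/2$ and $|x|>h/2$ would not yield the stated form. You have correctly identified the exponent bookkeeping as the crux, but the details need to be lifted from the cited lemmas rather than improvised.
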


\begin{proof}
    The proof follows much of that given in \cite{CakoniXiao2021CommPDE-CornerScatteringAnisotropic} and \cite{DiaoFeiLiu-2022-EM-Shape-Corners2}, and we give a sketch here. We first observe that \eqref{CGOEq} reduces to 
    \[\Delta v + kv = 0 \quad \text{ in }\mathbb{R}^n\]
    using the transformation 
    \[v = \sqrt{D} w, \quad k = -\frac{\Delta \sqrt{D}}{\sqrt{D}} - \frac{\mu}{D},\] as described in \cite{BalUhlmann2010IP-EllipticIP}. Then, by Proposition 3.1 of \cite{CakoniXiao2021CommPDE-CornerScatteringAnisotropic}, we have that $w$ of the form \eqref{CGO} solves \eqref{CGOEq} with the residual $r$ satisfying the estimate \eqref{CGOSolnEst}. 

    Next, by the convexity of $\mathcal{C}_{x_c,\theta_c}$ or $\mathcal{K}_{x_c;e_1,\dots,e_\ell}$, there exists a positive constant $\rho$ such that \eqref{CGOCond} holds for an open set of $\{\widehat{x-x_c}\in\mathbb{S}^{n-1}\}$. 

    Then, \eqref{CGOEst0} follows from Proposition 3.1 and Proposition 4.1 of \cite{DiaoFeiLiu-2022-EM-Shape-Corners2}, for a positive constant $C_{\mathcal{S}_h}$ depending only on $\mathcal{S}_h$ and $\rho$. Finally, \eqref{CGOEst1}--\eqref{CGOEst3} follow from Lemmas 3.4 and 4.2 of \cite{DiaoFeiLiu-2022-EM-Shape-Corners2} by choosing $s=1$, $p=\frac{1}{8}$, and for $\tilde{p}$ defined by \eqref{def:ptilde}, $\tilde{p}=\frac{48}{38}$ and $\tilde{p}=\frac{120}{79}$ in $\mathbb{R}^2$ and $\mathbb{R}^3$ in \eqref{CGOSolnEst} respectively.
\end{proof}

\begin{remark}
    We remark that the assumptions on $k$ in \eqref{def:k}--\eqref{def:ptilde} and Lemma \ref{CGOAsympLemma} are clearly satisfied when $D$ and $\mu$ are piecewise constant, as in the case considered in \cite{BLT2003}.
\end{remark}

\subsection{Proof of Main Results}

We first conduct a microlocal analysis on the corners of $\omega$. We begin with an auxiliary result.

\begin{theorem}\label{AuxThm}
    Let $\mathcal{S}_h$ be a corner. For $C^\gamma$ H\"older continuous functions $q,Q$, consider the following system of differential equations for $u\in H^2_{loc}(\mathcal{S}_h)$ and $v\in H^2_{loc}(\mathcal{S}_h)$:
    \[\begin{cases}
    -\nabla \cdot (D\nabla u) + \mu u = q &\quad \text{ in } \mathcal{S}_h, \\
    -\nabla \cdot (D\nabla v) + \mu v = Q &\quad \text{ in } \mathcal{S}_h, \\
    u = v, \quad \partial_\nu u = \partial_\nu v &\quad \text{ on }\partial \mathcal{S}_h \backslash \partial B_h.\end{cases}
    \]
     Then one has
    \[q(x_c) = Q(x_c),\]
    where $x_c$ is the apex of $\mathcal{S}_h$.
\end{theorem}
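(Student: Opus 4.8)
The plan is to run the standard corner-singularity / complex geometrical optics (CGO) argument, in the spirit of \cite{CakoniXiao2021CommPDE-CornerScatteringAnisotropic} and \cite{DiaoFeiLiu-2022-EM-Shape-Corners2}. Set $w:=u-v\in H^2_{loc}(\mathcal{S}_h)$ and $f:=q-Q$, which is $C^\gamma$ H\"older continuous. By linearity $w$ solves $-\nabla\cdot(D\nabla w)+\mu w=f$ in $\mathcal{S}_h$, with vanishing Cauchy data $w=\partial_\nu w=0$ on the flat/conical part $\partial\mathcal{S}_h\setminus\partial B_h$; the goal is to show $f(x_c)=0$. Using the standing assumptions \eqref{def:k}--\eqref{def:kcond}, Lemma \ref{CGOAsympLemma} furnishes, for each large $\tau>0$, a solution $w_\tau$ of \eqref{CGOEq} on $\mathbb{R}^n$ of the form \eqref{CGO} (with $s=1$ and $p$ as in the proof of that lemma), with residual $r$ obeying \eqref{CGOSolnEst} — so that $\|r\|_{L^\infty}\to0$ by Sobolev embedding — the lower bound \eqref{CGOEst0}, the exponential decay \eqref{CGOEst1}--\eqref{CGOEst2} away from the apex, the weighted estimate \eqref{CGOEst3}, and $\xi\cdot\widehat{(x-x_c)}\ge\rho>0$ on $\mathcal{S}_h$ by \eqref{CGOCond}.

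The second step is Green's identity. Since $-\nabla\cdot(D\nabla w_\tau)+\mu w_\tau=0$, integration by parts over $\mathcal{S}_h$ gives
\[
\int_{\mathcal{S}_h} f\,w_\tau\,dx \;=\; \int_{\partial\mathcal{S}_h}\bigl(D\,w\,\partial_\nu w_\tau - D\,w_\tau\,\partial_\nu w\bigr)\,dS .
\]
On $\partial\mathcal{S}_h\setminus\partial B_h$ both $w$ and $\partial_\nu w$ vanish, so only the integral over the spherical cap $\Gamma_h:=\partial\mathcal{S}_h\cap\partial B_h$ remains. There $|x-x_c|=h$, hence by \eqref{CGOCond} $|w_\tau|\lesssim e^{-\rho h\tau}$ and $|\nabla w_\tau|\lesssim\tau e^{-\rho h\tau}$; together with the finite trace norms of $w$ and $\partial_\nu w$ on $\Gamma_h$ (valid since $w\in H^2$ near $\Gamma_h$) this yields $\bigl|\int_{\mathcal{S}_h}f\,w_\tau\,dx\bigr|\lesssim(1+\tau)e^{-\rho h\tau}$.

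The final step extracts the apex value. Write $f=f(x_c)+\bigl(f-f(x_c)\bigr)$ with $|f(x)-f(x_c)|\le L|x-x_c|^\gamma$, so that
\[
|f(x_c)|\,\Bigl|\int_{\mathcal{S}_h}w_\tau\,dx\Bigr| \;\le\; L\int_{\mathcal{S}_h}|x-x_c|^\gamma|w_\tau|\,dx \;+\; C(1+\tau)e^{-\rho h\tau}.
\]
By \eqref{CGOEst0}, $\bigl|\int_{\mathcal{S}_h}w_\tau\bigr|\ge c\,\tau^{-(n-1)}$ for large $\tau$; on the other hand $|w_\tau|\lesssim e^{-\tau\xi\cdot(x-x_c)}\le e^{-\rho\tau|x-x_c|}$, so passing to polar coordinates about $x_c$ (equivalently invoking \eqref{CGOEst3}) gives $\int_{\mathcal{S}_h}|x-x_c|^\gamma|w_\tau|\,dx\lesssim\tau^{-(\gamma+n)}=o(\tau^{-(n-1)})$. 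Multiplying through by $\tau^{n-1}$ and letting $\tau\to\infty$ forces $|f(x_c)|\le0$, i.e. $q(x_c)=Q(x_c)$.

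The main obstacle is making the integration by parts rigorous under the sole hypothesis $u,v\in H^2_{loc}(\mathcal{S}_h)$: one must verify that $w$ is regular up to $\partial\mathcal{S}_h$ — up to the flat parts this follows from boundary elliptic regularity, since $w$ has vanishing Dirichlet trace there and $f\in C^\gamma$; up to $\Gamma_h$ it follows from $w\in H^2_{loc}$ after replacing $h$ by a slightly smaller value; and the apex $x_c$ itself is handled by a cutoff and a limiting argument, using that $w_\tau$ is bounded near $x_c$ and $|\mathcal{S}_h\cap B_\varepsilon(x_c)|\to0$. A secondary point is to confirm that the parameters in \eqref{def:ptilde} and Lemma \ref{CGOAsympLemma} can be chosen so that $\|r\|_{L^\infty}\to0$ and so that the weighted CGO estimate decays strictly faster than $\tau^{-(n-1)}$ — this is precisely where the strict H\"older exponent $\gamma>0$ enters.
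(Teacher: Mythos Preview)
Your proposal is correct and follows essentially the same CGO/corner-singularity argument as the paper: take the difference $\tilde u=u-v$, pair with the CGO solution of Lemma~\ref{CGOAsympLemma} via Green's identity, expand $q-Q$ around $x_c$ using $C^\gamma$ regularity, and balance \eqref{CGOEst0} against \eqref{CGOEst3} and the exponentially small boundary remainder after multiplying by $\tau^{n-1}$. The only cosmetic difference is that you explicitly reduce the boundary integral to the spherical cap $\Gamma_h$ (using the vanishing Cauchy data on $\partial\mathcal{S}_h\setminus\partial B_h$) before applying the pointwise exponential decay, whereas the paper writes the boundary terms over all of $\partial\mathcal{S}_h$ and invokes \eqref{CGOEst1}--\eqref{CGOEst2} together with trace estimates; your treatment is arguably cleaner but the underlying mechanism is identical.
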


\begin{proof}
    Taking the difference of the equations and multiplying by the solution $w$ to \eqref{CGOEq} given by \eqref{CGO} and integrating in the truncated cone $\mathcal{S}_h$, we have, by Green's formula, 
\begin{equation}\label{DiffEq}
    \int_{\mathcal{S}_h} - (q-Q)  w(x) \,dx = - \int_{\partial \mathcal{S}_h} w(x) \partial_\nu \tilde{u}(x) \, d\sigma + \int_{\partial \mathcal{S}_h} \tilde{u}(x) \partial_\nu w(x) \, d\sigma.
\end{equation}
Here, we write $\tilde{u}=u-v$.

By the H\"older continuity of $q$ and $Q$, we can expand $F:=-q+Q$ as follows: 
\[F=F(x_c)+\delta F,\quad |\delta F|\leq \norm{F}_{C^\gamma(\mathcal{S}_h)}|x|^\gamma,\] 
where \[F(x_c)=-q(x_c) +Q(x_c).\] Therefore, the left-hand-side of \eqref{DiffEq} can be expanded as
\begin{align*}
    \int_{\mathcal{S}_h}F w(x) \,dx = F(x_c)\int_{\mathcal{S}_h} w(x) \,dx + \int_{\mathcal{S}_h}\delta F w \,dx,
\end{align*}
where
\begin{equation}\label{LHSEst}
    \left|\int_{\mathcal{S}_h}\delta F w \,dx\right| \leq \norm{F}_{C^\gamma(\mathcal{S}_h)} \int_{\mathcal{S}_h}|x|^\gamma |w| \,dx \leq C \int_{\mathcal{S}_h}|x|^\gamma |w| \,dx 
\end{equation}
for some constant $C$. 

On the other hand, the right-hand-side of \eqref{DiffEq} can be analysed as follows: By the Cauchy-Schwarz inequality and the trace theorem, 
\begin{align*}
    \left|\int_{\partial \mathcal{S}_h} w(x) \partial_\nu \tilde{u}(x) \, d\sigma\right| & \leq \norm{w}_{H^{\frac12}(\partial \mathcal{S}_h)}\norm{\partial_\nu \tilde{u}}_{H^{-\frac12}(\partial \mathcal{S}_h)} \\
    & \leq C \norm{w}_{H^1(\partial \mathcal{S}_h)}\norm{\tilde{u}}_{H^1(\mathcal{S}_h)}
    \\
    & \lesssim (1+\tau)\left(1+\tau^{-d}\right)e^{-\rho h\tau},
\end{align*}
by \eqref{CGOEst1} and \eqref{CGOEst2}, while
\begin{align*}
    \left|\int_{\partial \mathcal{S}_h} \tilde{u}(x) \partial_\nu w(x) \, d\sigma\right| & \leq \norm{\tilde{u}}_{L^2(\partial \mathcal{S}_h)}\norm{\partial_\nu w}_{L^2(\partial \mathcal{S}_h)} \\
    & \leq C \norm{\tilde{u}}_{H^1(\partial \mathcal{S}_h)}\norm{\partial_\nu w}_{L^2(\partial\mathcal{S}_h)}
    \\
    & \lesssim (1+\tau)\left(1+\tau^{-d}\right)e^{-\rho h\tau}.
\end{align*}
by \eqref{CGOEst2}, where $d = \frac{2}{3}$ for $n=2$ and $d=\frac{2}{5}$ for $n=3$. Note that the (trace or Sobolev) constants $C>0$ here may be different, and different from that of \eqref{LHSEst}.

Combining these two estimates with \eqref{LHSEst} and \eqref{CGOEst0}, then multiplying by $\tau^{n-1}$ on both sides and letting $\tau\to\infty$, we have that $F(x_c)=0$, i.e. $q(x_c) = Q(x_c)$.

\end{proof}

\begin{proof}[Proof of Theorem \ref{thm:mainPolygon}]
    Suppose on the contrary that there exists two solutions $q,\hat{q}\in\mathcal{C}$, of the form $q=\varphi\chi_{\omega}$ and $\hat{q}=\hat{\varphi}\chi_{\hat{\omega}}$, to the BLT problem. We will first show that $\omega\Delta\hat{\omega} := (\omega\backslash\hat{\omega}) \cup (\hat{\omega}\backslash\omega)$ cannot possess a corner on the connected component $\partial(\Omega\backslash\overline{\omega\cup\hat{\omega}})$ that connects to $\partial\Omega$, if $(\omega,q),(\hat{\omega},\hat{q})$ satisfy the BLT problem. Indeed, since $q$, $\hat{q}\in\mathcal{C}$ satisfy the BLT problem corresponding to $u,\hat{u}\in H^2_{loc}(\Omega)$ respectively, by unique continuation principle, it holds that
    \[\begin{cases}
    -\nabla \cdot (D\nabla u) + \mu u = q &\quad \text{ in } \mathcal{S}_h, \\
    -\nabla \cdot (D\nabla \hat{u}) + \mu \hat{u} = \hat{q} &\quad \text{ in } \mathcal{S}_h, \\
    u = \hat{u}, \quad \partial_\nu u = \partial_\nu \hat{u} &\quad \text{ on }\partial \mathcal{S}_h \backslash \partial B_h.\end{cases}
    \] 
    Since $q,\hat{q}$ are H\"older continuous, by Theorem \ref{AuxThm}, we have that $q(x_c)=\hat{q}(x_c)$, i.e. \begin{equation}\label{MainThmPolygonResult}\varphi(x_c)\chi_{\omega}(x_c)=\hat{\varphi}(x_c)\chi_{\hat{\omega}}(x_c).\end{equation} Therefore, $\omega\Delta\hat{\omega}$ cannot possess a corner. 
    Since $\omega\Delta\hat{\omega}$ cannot possess a corner, and $\omega,\hat{\omega}$ are polyhedrons, this means that $\omega=\hat{\omega}$, and consequently, \eqref{MainThmPolygonResult} also implies $\varphi(x_c)=\hat{\varphi}(x_c)$ for every corner $x_c$ of $\omega$.
   
\end{proof}

\begin{proof}[Proof of Theorem \ref{thm:mainCorona}]
    This follows easily as in the polyhedral case by contradiction. Suppose on the contrary that $q,\hat{q}\in\mathcal{E}$ with corona shapes $\omega\neq\hat{\omega}$, $\omega,\hat{\omega}\in\mathcal{D}$. Then, by the assumptions on $\omega$ and $\hat{\omega}$, there exists a corner $\mathcal{S}_h\subset\hat{\omega}\backslash\omega$. But $q$ and $\hat{q}$ satisfy the BLT problem, so as in the previous proof, $\omega\Delta\hat{\omega}$ cannot possess a corner, and we arrive at a contradiction so $\omega = \hat{\omega}$. Finally, invoking Theorem \ref{AuxThm} again since $q$ and $\hat{q}$ are H\"older continuous, we obtain $\varphi(x_c)=\hat{\varphi}(x_c)$ for every corner $x_c$ of $\omega$.
\end{proof}

Correspondingly, we can prove Corollary \ref{cor:PolyEmbed}.

\begin{proof}[Proof of Corollary \ref{cor:PolyEmbed}]
    This follows similarly to the case of $C^2$ domains in Corollary \ref{cor:SmoothEmbed}. Assume on the contrary that $q,\hat{q}\in\mathcal{G}$ satisfy the BLT problem with the given assumptions, with the form $q=\sum_{j=1}^m \varphi_j\chi_{\omega_j}$ and $\hat{q}=\sum_{j=1}^M\hat{\varphi}_j\chi_{\hat{\omega}_j}$. Then, since $q,\hat{q}\in L^2(\Omega)$ with $q|{\partial\omega_1}=\varphi_1,\hat{q}|{\partial\omega_1}=\hat{\varphi}_1$ such that $\varphi_1,\hat{\varphi}_1\neq0$ by assumption, using Theorem \ref{thm:mainPolygon} or Theorem \ref{thm:mainCorona}, we have that $\omega_1 = \hat{\omega}_1$ and $\varphi_1 = \hat{\varphi}_1\neq0$ on the corners $x_c$ of $\omega_1$. Since $\varphi_1,\hat{\varphi}_1$ are constants by the assumption $q\in\mathcal{G}$, we have that $\varphi_1 = \hat{\varphi}_1\neq0$ on $\omega_1\backslash\overline{\omega}_2$.

    We will show that $\omega_j=\hat{\omega}_j$ and $\varphi_j=\hat{\varphi}_j$ for every $j=1,\dots,m$ by induction. Suppose this holds for $j=1,\dots,\ell$. Then, since $\omega_{\ell+1}\Subset\omega_\ell$, given any open covering of $\omega_{\ell+1}$ by open sets of $\omega_\ell$, there is a finite subcollection covering $\omega_{\ell+1}$. Therefore, for each corner $x_c$, we can find $h_{\ell+1}$ with the open ball $B_{h_{\ell+1}}(x_c)$ such that the unique continuation principle holds in $B_{h_{\ell+1}}(x_c)$. Then, applying the unique continuation principle, we have that 
    \[\begin{cases}
    -\nabla \cdot (D\nabla u) + \mu u = \varphi_{\ell+1}\chi_{\omega_{\ell+1}} &\quad \text{ in } \mathcal{S}_{h_{\ell+1}}, \\
    -\nabla \cdot (D\nabla \hat{u}) + \mu \hat{u} = \hat{\varphi}_{\ell+1}\chi_{\hat{\omega}_{\ell+1}} &\quad \text{ in } \mathcal{S}_{h_{\ell+1}}, \\
    u = \hat{u}, \quad \partial_\nu u = \partial_\nu \hat{u} &\quad \text{ on }\partial \mathcal{S}_{h_{\ell+1}} \backslash \partial B_{h_{\ell+1}}.\end{cases}
    \] 
    Applying Theorem \ref{AuxThm} then gives 
    \[\omega_{\ell+1} = \hat{\omega}_{\ell+1} \quad \text{ and } \quad \varphi_{\ell+1}(x_c) = \hat{\varphi}_{\ell+1}(x_c).\] Since $\varphi_{\ell+1},\hat{\varphi}_{\ell+1}$ are constants, this holds in $\omega_{\ell+1}\backslash\overline{\omega}_{\ell+2}$. Repeating inductively, we have the result for $j=1,\dots$ and $m=M$.
\end{proof}

\section{Inversion algorithm}\label{sec:algo}
In this section, we discuss numerical algorithms designed to solve inverse problems. As we are aware, inverse problems are inherently ill-posedness. One approach to mitigate the ill-posedness of these problems is to employ the Tikhonov regularisation method.
The essence of the Tikhonov regularisation method expands  the cost functional with a quadratic term, i.e.,
\begin{align}
\label{variational reg}
J[q]:=\frac{1}{2}\parallel {F}(q)-\Phi^{\delta}\parallel_{L^2}^2+\frac{\lambda}{2}\parallel q \parallel_{L^2}^2,
\end{align}
where $F(q)$ represents the  measurement map of the \eqref{eq:mea}, $\Phi^{\delta}$ signifies the  measurement data and $\lambda$ is the regularization parameter. Moreover, the measurement data $\Phi^{\delta}$ and the exact data $\Phi$ satisfies $\|\Phi^{\delta}-\Phi\|_{L^2}\leq\delta$. Here $\delta$ denotes the noise level.

In this paper, we utilise the Levenberg-Marquardt (LM) method \cite{doicu2010numerical,hanke1997regularizing} to determine the minimizer of $J[q]$, which is essentially a modified version of the Gauss-Newton iteration. Assuming that $\tilde{q}$ is an estimate of the true solution ${q}^{\dag}$, we can approximately substitute the nonlinear mapping ${F}$ in (\ref{variational reg}) with its linearization around $\tilde{q}$. Consequently, the minimization of (\ref{variational reg}) can be interpreted as an attempt to minimize
\begin{align}
\label{var-reg}
J[\delta q]:=\frac{1}{2}\parallel {F}'(\tilde{q})\delta q_0-(\Phi^{\delta}-{F}(\tilde{q}))\parallel_{L^2}^2+\frac{\lambda}{2}\parallel \delta q \parallel_{L^2}^2,
\end{align}
where $\delta q=q-\tilde{q}$. Subsequently, we present a detailed outline of the sequential steps involved in the Levenberg-Marquardt algorithm, see algorithm \ref{algorithm-pgddf}.
\begin{algorithm}
\caption{LM method for solving the variational problem (\ref{var-reg}).}
      1:~~Choose $q^0, \lambda$, and set $i=0$;\\
      ~2:~~Solve the direct problem and determine the residual $\textbf{F}_i=\Phi^{\delta}-{F}(q^i)$;\\
      3:~~Compute the Jacobian $G={F}'(q^i)$;\\
      ~4:~~Calculate $\delta q^i=(G^*G+\lambda I)^{-1}(G^* \textbf{F}_i)$, where $G^*$ is the conjugate of $G$;\\
      ~5:~~Update the solution $q^i$ by $q^{i+1}=q^i+\delta q^i$, \\
      6:~~Increase $i$ by one and go to step 2, repeat the above procedure until a stopping criterion \\ $~~~~~$is satisfied.\\
   \label{algorithm-pgddf}
\end{algorithm}

Generally, the choice of optimal regularization parameter is critical when implementing regularization strategies. Specifically, in the context of simultaneous inversion problems, certain inversion algorithms may falter owing to the inherent uncertainty of the inversion problem, especially if empirical selection of regularisation parameters is employed. Consequently, we will leverage the properties of sigmoid-type functions (as referenced in \cite{
li2013simultaneous}) to determine the regularization parameter for our implementation of the optimal perturbation algorithm, i.e.,
\begin{align*}
\lambda=\lambda(i)=\frac{1}{1+e^{\beta(i+i_0)}},
\end{align*}
where $i$ is the number of iterations, $i_0$ is an a priori chosen number and $\beta > 0$ is the adjust parameter.

We are aware that the function $\lambda(i)$ exhibits a continuous decrease in values, gradually approaching zero as $i$ increases. Furthermore, it experiences a rapid diminishment after $i\geq 10$. Given that the aforementioned properties are essential in general regularization theory for regularization parameters, we  employ numerical inversions to tackle the inverse problem. 

\section{Numerical results and discussion}\label{sec:resu}

In all subsequent calculations, the domain $\Omega$ is considered to be a circle in the case of $n=2$ or a ball in the case of $n=3$, centered at the origin with a radius of 3, denoted as $B_3(0)$. We employ the finite element method to solve the forward problem and the total number of measurements is 200. The noise level $\delta=0.01$.

During the iterative process, the Jacobian matrix $G$ is computed using a finite difference method. The maximum number of iteration steps is set as 20, and the following stopping rule is applied
\begin{align*}
E_i=\parallel q^i-q^{i-1}\parallel\leq 10^{-2}.
\end{align*}
The noisy measured data is generated by
\begin{align*}
\Phi^\delta=\Phi+\delta \Phi (2\textrm{rand}(\textrm{size}(\Phi))-1),
\end{align*}
where $\Phi$ is the exact data, rand  is representative of a uniform distribution within the range of $[0, 1]$.

The accuracy of the approximate solution $\tilde{q}$ by the algorithm \ref{algorithm-pgddf} is characterized by comparing to the exact solution $q^{\dag}$ via the relative error
\begin{align*}
e_r=\frac{\|\tilde{q}-q^{\dag}\|_{L^2}}{\|q^{\dag}\|_{L^2}}.
\end{align*}

\subsection{Smooth Domains} 
In this subsection we consider several numerical examples for smooth domains.

\begin{exm}\label{cir}
 We consider a 2-dimensional scenario where $\omega$ is a circle denoted as $B_1(0)$, and the intensity of the source within $\omega$ is set to $\varphi=1$.
\end{exm}

In the first example \ref{cir}, the distribution $D(x)$ takes the form of a piecewise constant function. More precisely, the material within the $B_2(0)$ section represents the lungs and is characterized by $\mu_a=0.023$ and $\mu_s'=2$. Conversely, the material in the $\Omega\backslash\overline{B_2(0)}$ region corresponds to the muscle, with respective values of $\mu_a=0.007$ and $\mu_s'=1.031$. The function $g^- = 0$ signifies the simulation taking place within a dark environment. In the iterative process, the regularization parameter is set to $i_0=0$ and $\beta=0.7$, the initial  iteration  $q^0$ is chosen for a circle with centers at $(0.3, 0.5)$ and a radius of 0.5, and the intensity of the source within $\omega$ is set to $\varphi=0.8$.

 In Figure \ref{circle}, we analyze the accuracy of the reconstructed solution compared to the exact solution. In Figure \ref{circle} (a)-(d), we present the evolution of the reconstructed solution over different iteration steps $i$. Additionally, in Figure \ref{circle} (f), we illustrate the variation of the relative error $e_r$ of the number of iteration steps. Based on the obtained results, it is evident that the solution $\tilde{q}$ achieved after six iterations closely approximates the exact solution, which further verifies that when $\omega$ is a smooth domain, the location, shape and size of this light source can be effectively reconstructed by boundary measurements \eqref{Neumann}.

\begin{figure}
\centering
\subfloat[$i=0$ ]{\includegraphics[width=0.3\textwidth]
                   {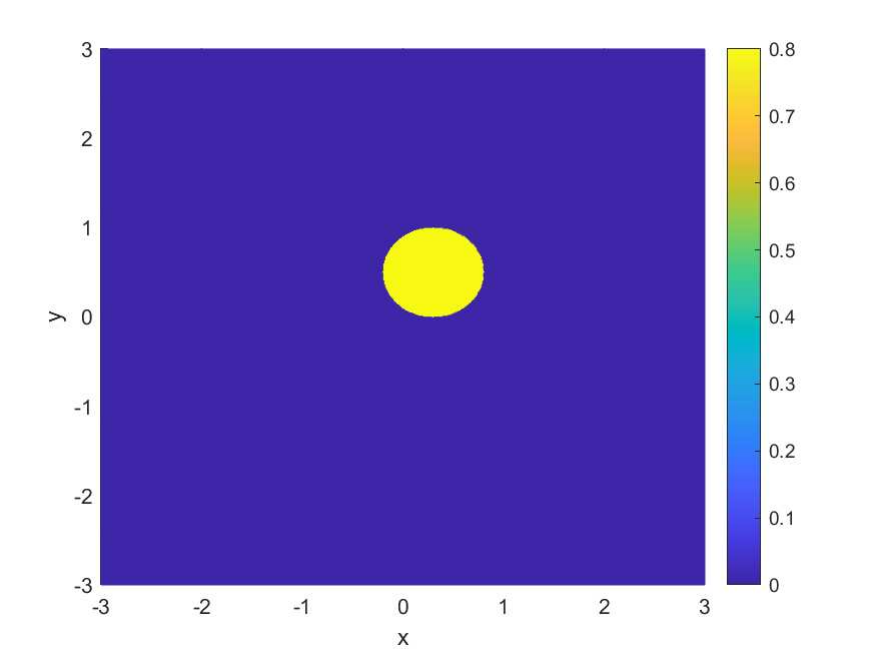}}
\subfloat[$i=1$ ]{\includegraphics[width=0.3\textwidth]
                   {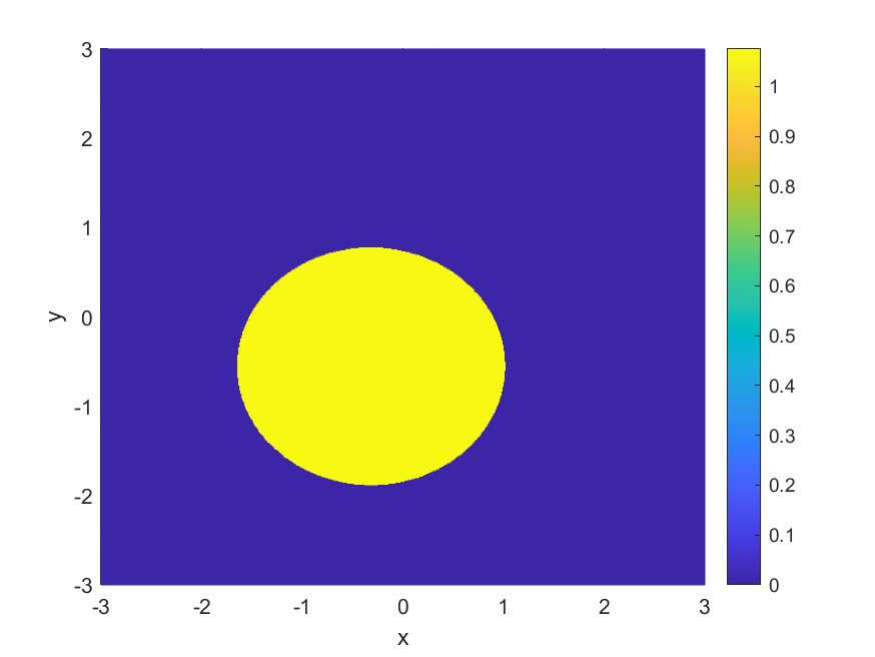}}
\subfloat[ $i=2$]{\includegraphics[width=0.3\textwidth]
                   {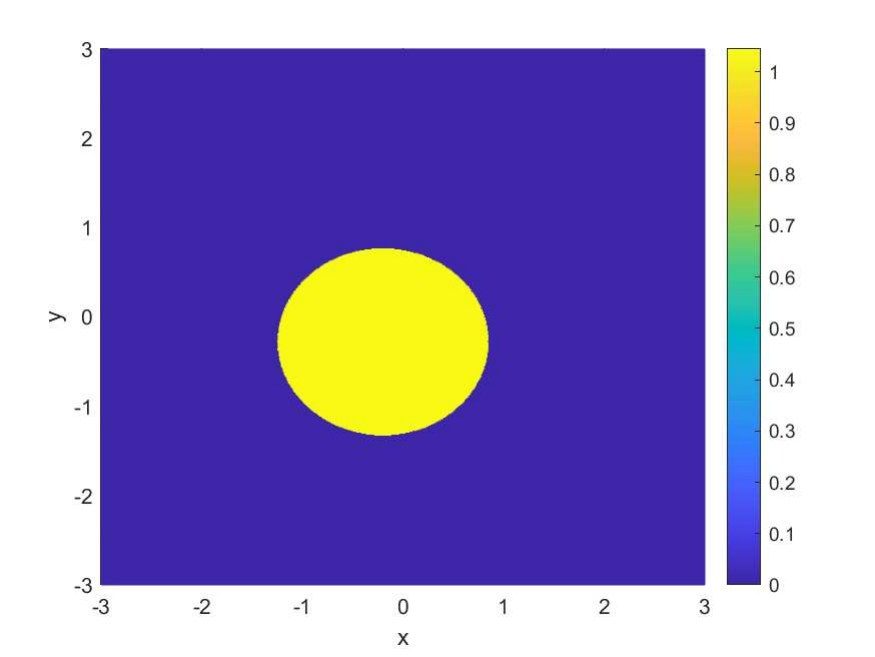}}\\
\subfloat[ $i=6$]{\includegraphics[width=0.3\textwidth]
                   {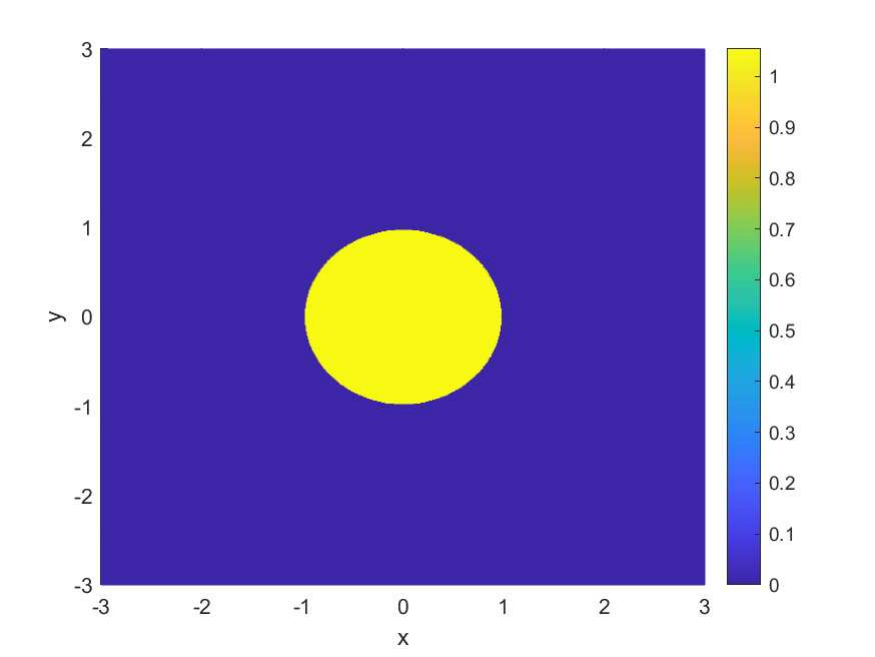}}
\subfloat[ $exact\ solution $]{\includegraphics[width=0.3\textwidth]
                   {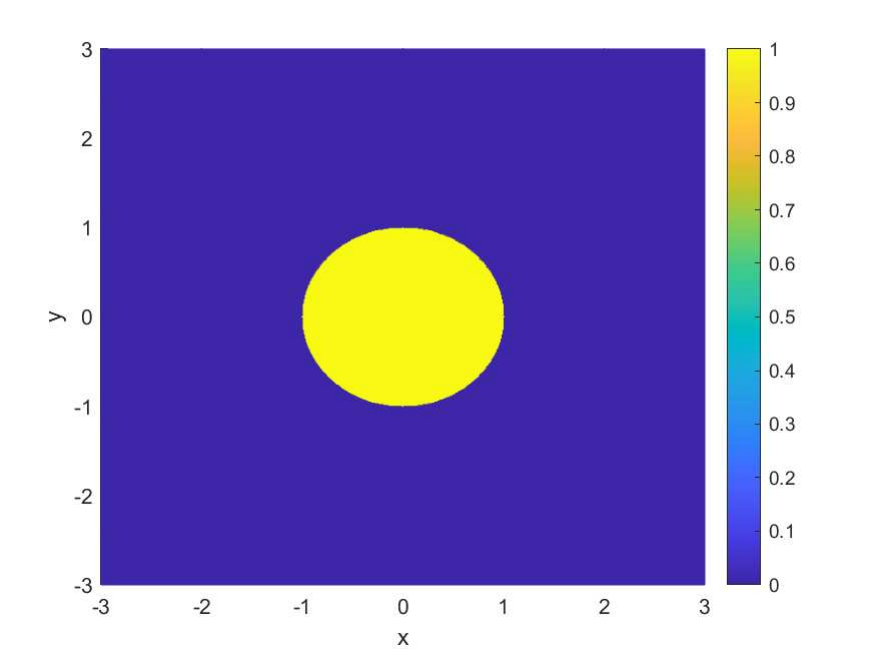}}
\subfloat[ $e_r $]{\includegraphics[width=0.3\textwidth]
                   {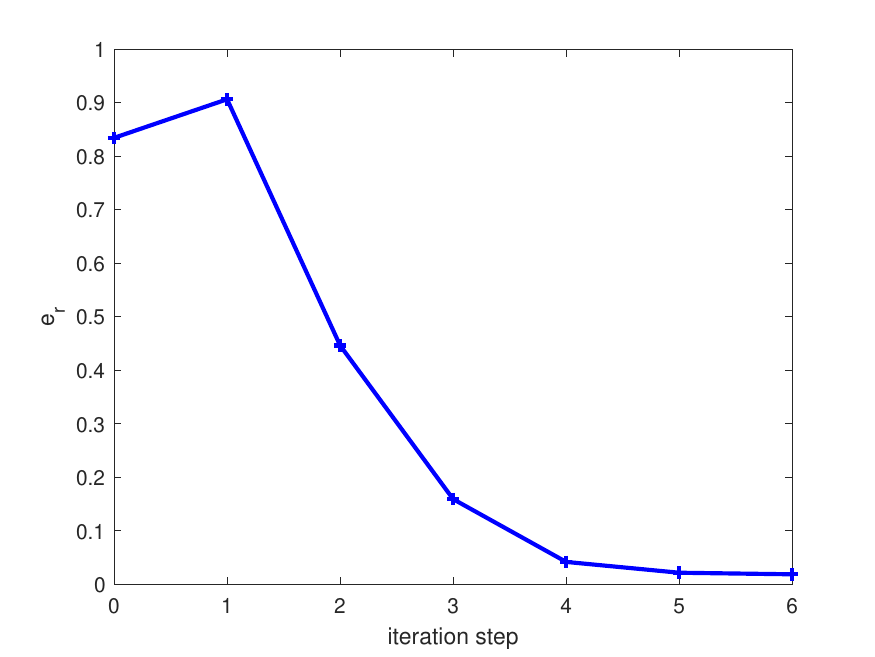}}

\caption{\label{circle}   (a) (b) (c) (d) reconstructed solutions at different iteration steps, (e) exact solution, (f) $e_r$ for different iteration steps $i$. }
\end{figure}

\begin{exm}\label{exm2}
In the second example, we consider the case where $q$ is a nested partition in two dimensions, i.e., $q=\sum_{j=1}^2\varphi_j\chi_{\omega_j}$, where $\omega_1=B_{3/2}(0)\backslash \overline{B_{1/2}(0)} $ and $\omega_2=B_{1/2}(0)$,  and the corresponding source intensities are  $\varphi_1=1$ and $\varphi_2=2$, respectively.
\end{exm}

  For Example \ref{exm2}, the function $D(x)$ is constant within the domain $\Omega$, with the material representing the heart having properties $\mu_a=0.011$ and $\mu_s'=1.096$, and the input function $g^-=0$ in the forward problem. During the iteration, the regularization parameters are chosen with $i_0=0$ and $\beta=0.5$, while the initial guess for the source function is assigned as $q^0=\sum_{j=1}^2\varphi_j\chi_{\omega_j}$,  where $\omega_1=B_{2.1}(0.5,0.5)\backslash \overline{B_{0.4}(0.1,0.1)} $ and $\omega_2=B_{0.4}(0.1,0.1)$,  and the corresponding initial source intensities are  $\varphi_1=\varphi_2=1.8$, respectively.

In Figure \ref{circle_n}, we analyze the accuracy of the reconstructed solution in comparison to the exact solution. Figures \ref{circle_n} (a)-(d) depict the progressive development of the reconstructed solution at different iteration steps $i$. Furthermore, in Figure \ref{circle_n} (f), we present the progression of the relative error across the iteration steps. It is clear that the reconstructed solution is obtained in excellent agreement with the exact solution. It shows that even if the internal source is nested in this case, the exact reconstruction of the source can be achieved by measurements on the boundary.

\begin{figure}
\centering
\subfloat[$i=0$ ]{\includegraphics[width=0.3\textwidth]
                   {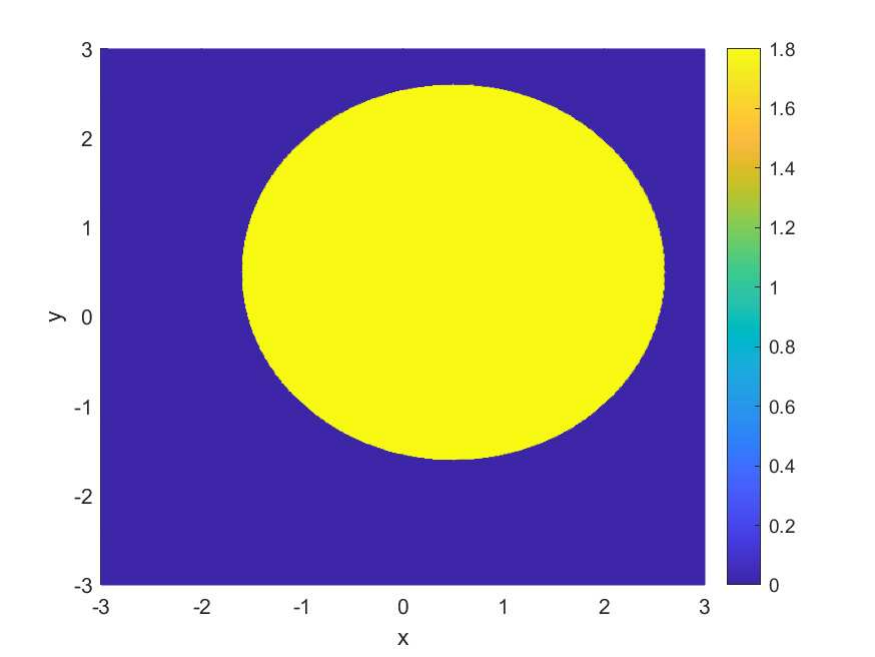}}
\subfloat[$i=1$ ]{\includegraphics[width=0.3\textwidth]
                   {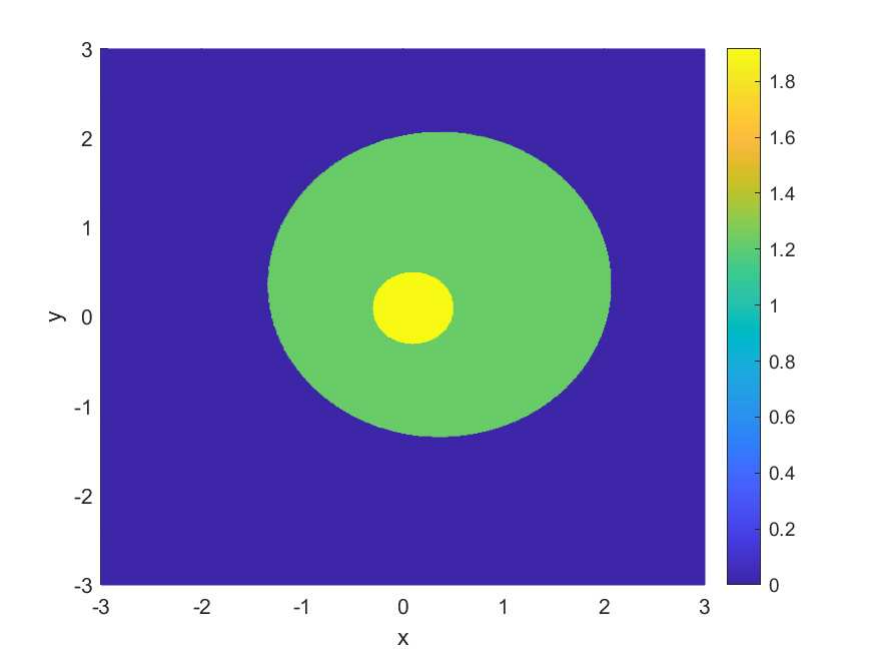}}
\subfloat[ $i=2$]{\includegraphics[width=0.3\textwidth]
                   {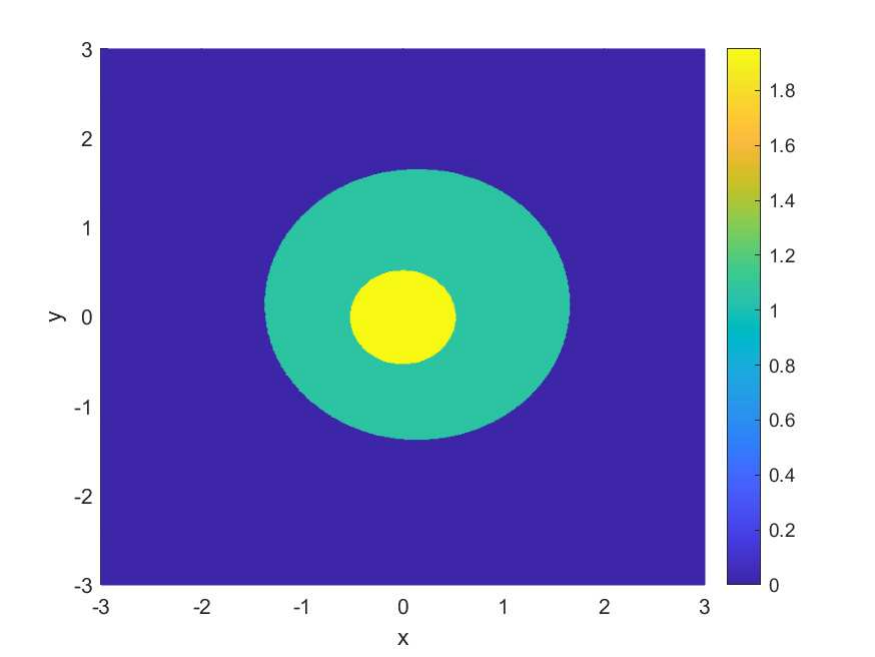}}\\
\subfloat[ $i=5$]{\includegraphics[width=0.3\textwidth]
                   {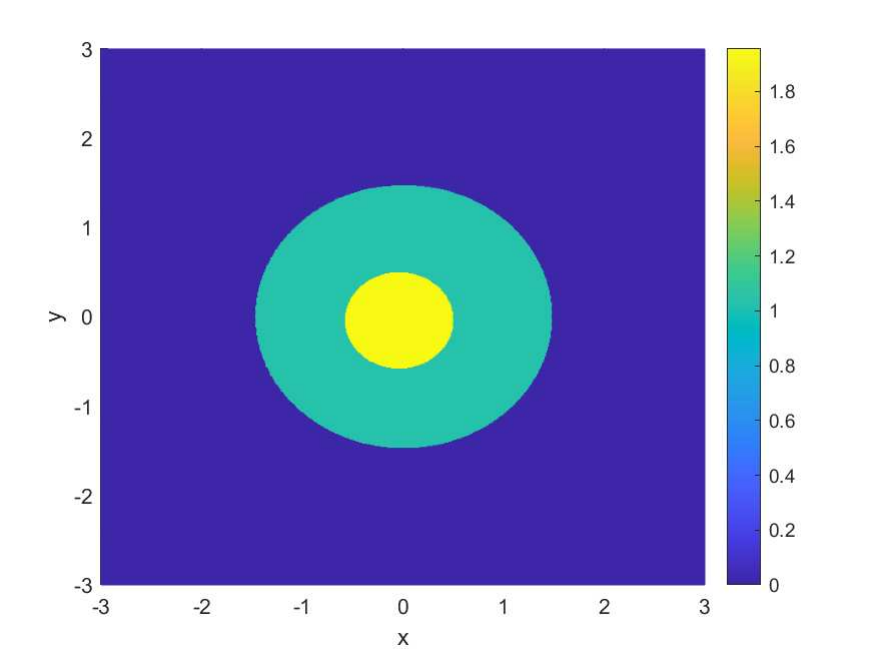}}
\subfloat[ $exact\ solution $]{\includegraphics[width=0.3\textwidth]
                   {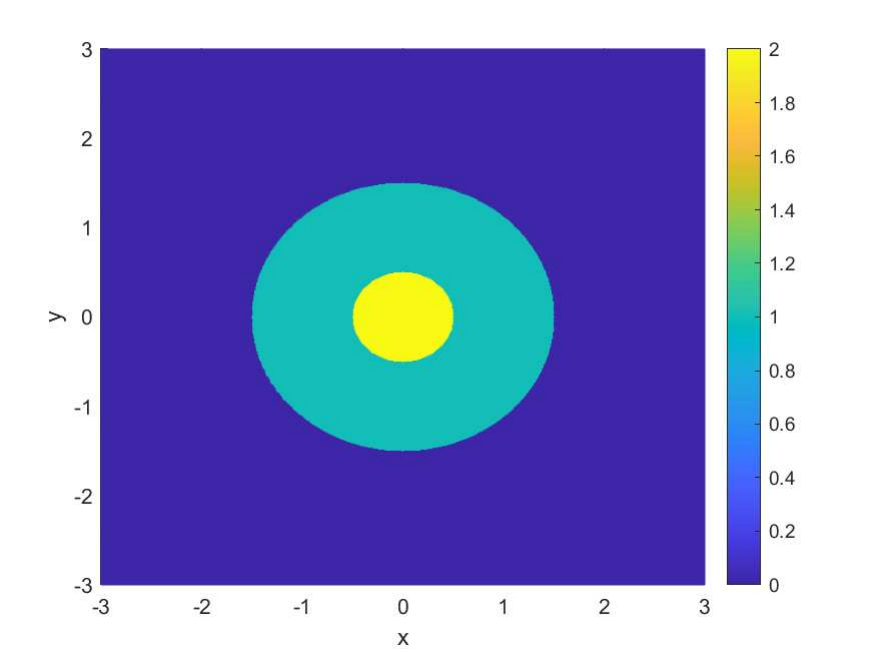}}
\subfloat[ $e_r $]{\includegraphics[width=0.3\textwidth]
                   {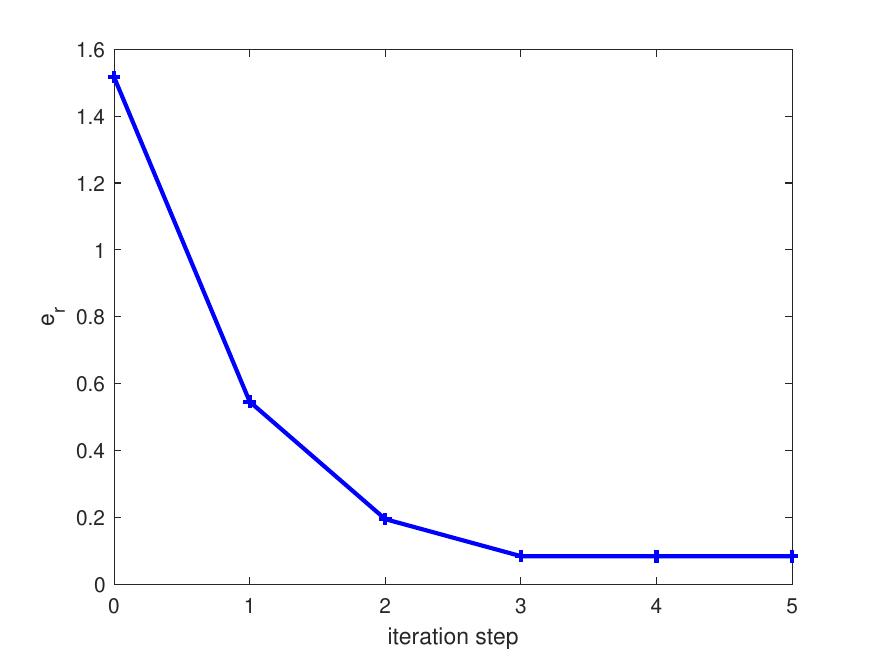}}

\caption{\label{circle_n}  (a) (b) (c) (d) reconstructed solutions at different iteration steps, (e) exact solution, (f) $e_r$ for different iteration steps $i$. }
\end{figure}

Next, we present numerical examples to validate the accurate reconstruction of the source term in three dimensions.

\begin{exm} \label{exm3}
We consider a simple case where $\omega=B_1(0)$ is a ball and the strength of its source is $\varphi=1$.
\end{exm}

In Example \ref{exm3}, the function $D(x)$ takes the form of a constant within $\Omega$, which represents the heart as the material. Specifically, we have $\mu_a = 0.011$ and $\mu_s' = 1.096$. We select $g=x$. Our objective is to demonstrate that our numerical experiments successfully achieve a distinct reconstruction of the source term, even in a non-dark environment. The regularization parameter is given by $\beta=0.8, i_0=8$, and the initial  guess $q^0$ is chosen for a ball with centers at $(0.5, 0.5,0.5)$ and a radius of 0.5, and the intensity of the source within $\omega$ is set to $\varphi = 0.1$.
Similar to the numerical computations in two dimensions, we assess the accuracy between the approximate solution and the exact solution. This evaluation is depicted in Figure \ref{sphere}. In Figure \ref{sphere} (a) and  (d), we present the initial guess $\omega$ and the initial guess $\omega$ and $\varphi$ in the $z = 0$ plane, respectively. The results of the final reconstruction are displayed in Figure \ref{sphere} (b) and  (e), where Figure \ref{sphere} (b) represents the 3D stereogram of the final iteration of $\omega$, and Figure \ref{sphere} (e) showcases the reconstructed result of $\omega$ and $\varphi$ in the $z = 0$ plane. Additionally, the exact solution is illustrated in Figures \ref{sphere} (c) and  (f). It can be observed that the reconstructed result obtained through iteration closely aligns with the true solution, indicating a good fit.

\begin{figure}
\centering
\subfloat[]{\includegraphics[width=0.3\textwidth]
                   {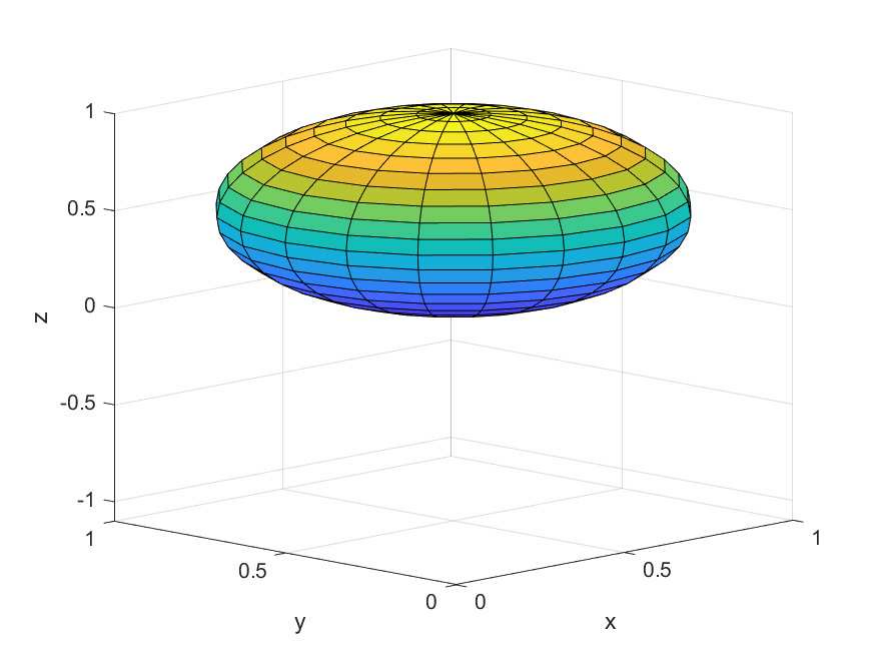}}
\subfloat[]{\includegraphics[width=0.3\textwidth]
                   {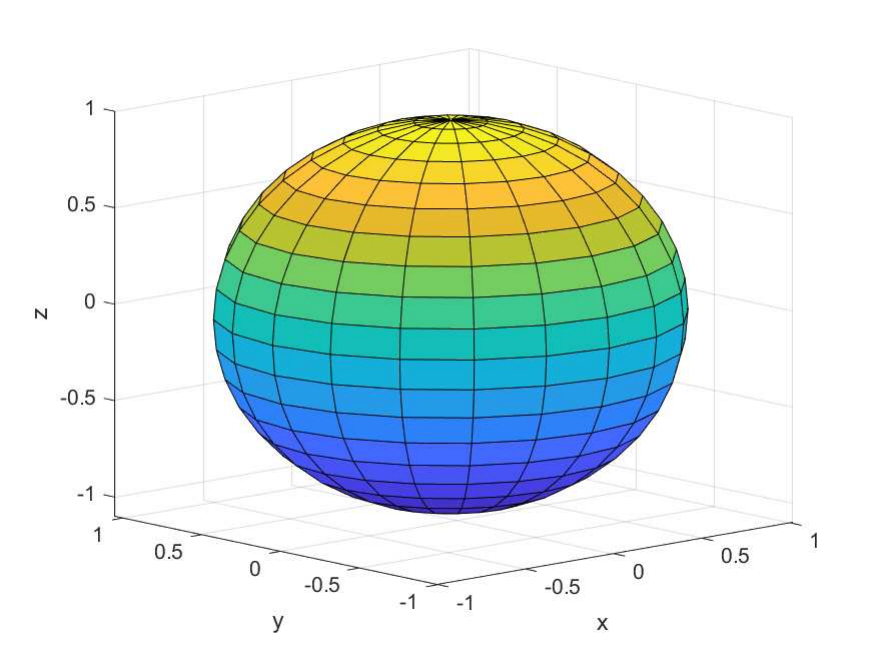}}
\subfloat[]{\includegraphics[width=0.3\textwidth]
                   {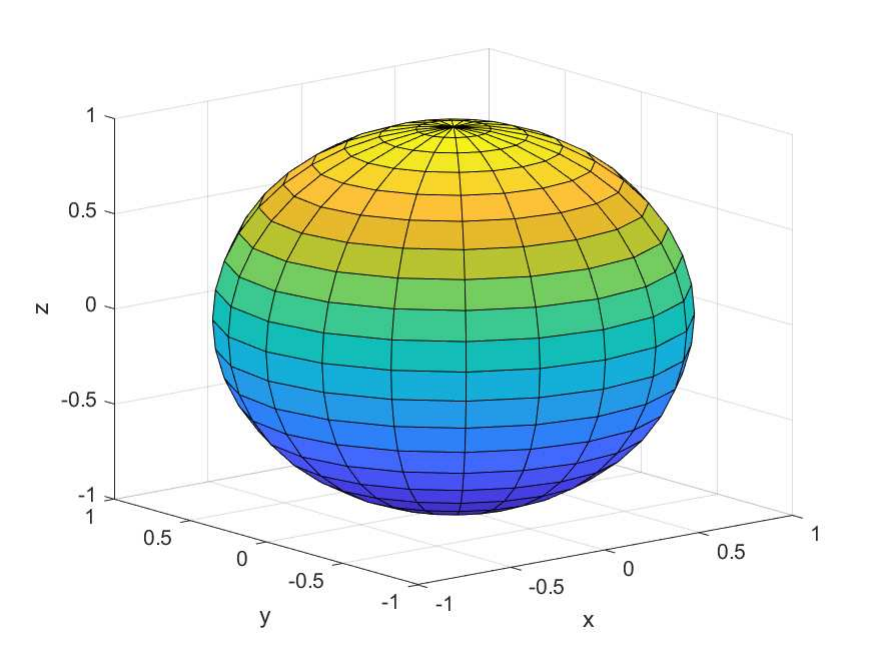}}\\
\subfloat[ ]{\includegraphics[width=0.3\textwidth]
                   {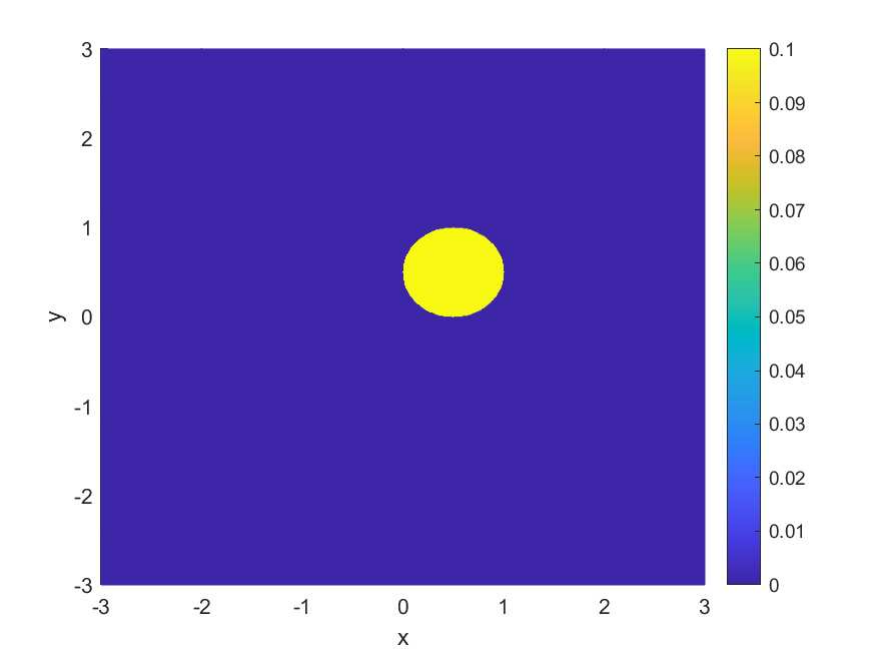}}
\subfloat[]{\includegraphics[width=0.3\textwidth]
                   {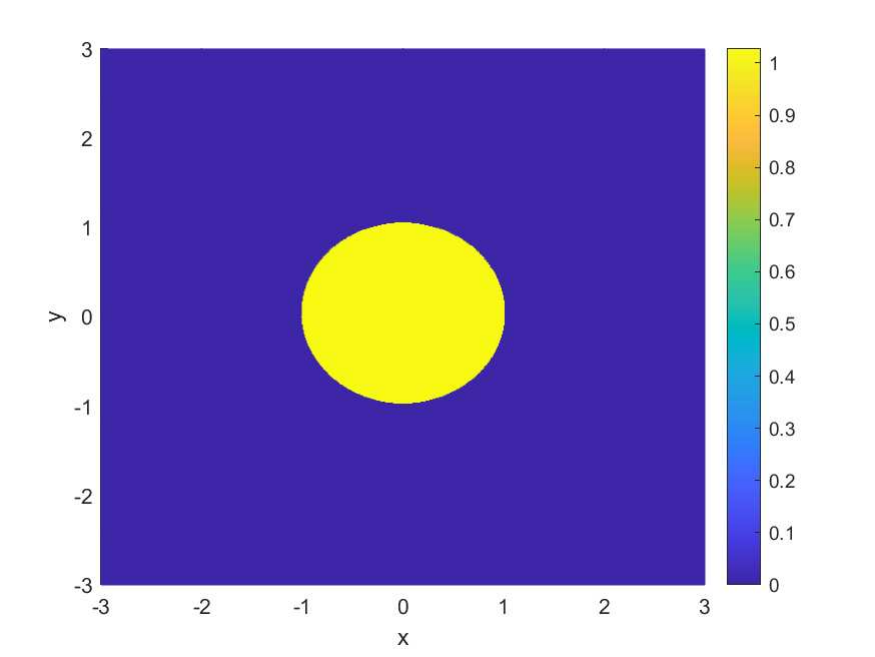}}
\subfloat[]{\includegraphics[width=0.3\textwidth]
                   {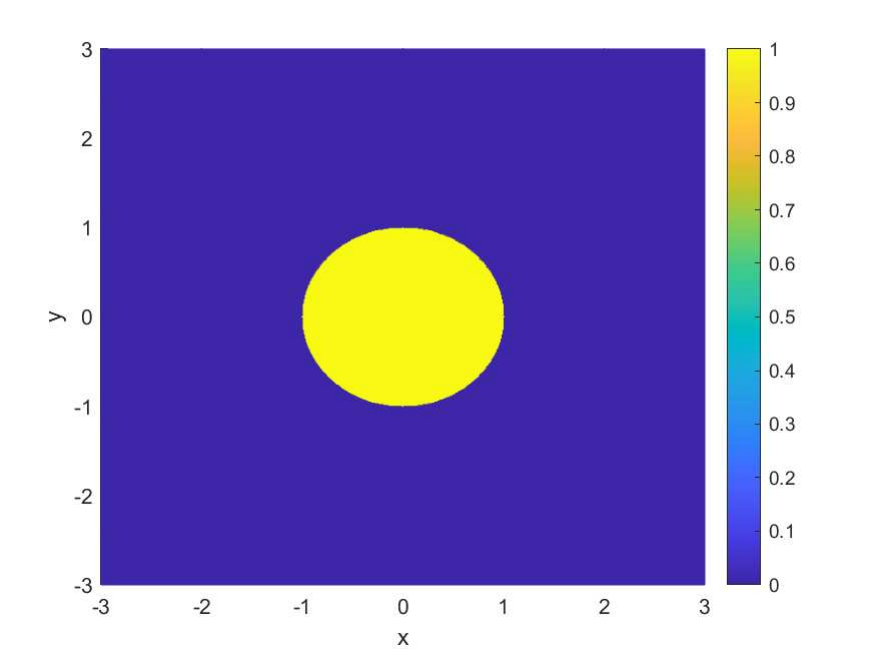}}
\caption{\label{sphere} (a) Initial iteration of $\omega$, (b) reconstructed $\omega$, (c)  exact $\omega$, (d) initial iteration of $\omega$ and $\varphi$ in $z=0$ plane, (e) reconstructed $\omega$ and $\varphi$ in $z=0$ plane, (f) the exact $\omega$ and $\varphi$ in $z=0$ plane. }
\end{figure}
\begin{exm} \label{exm4}
We consider $\omega$ to be an ellipsoid with center at  $(0, 0, 0)$, where the equatorial radius along the $x$ and $y$ axes is 1, 2, respectively, and the polar radius (along the $z$-axis) is 1, and the source $\varphi=1$ within $\omega$.
\end{exm}

In Example \ref{exm4}, the material in the background region $\Omega$ is a heart characterised by $\mu_a = 0.011$ and $\mu_s' = 1.096$. The input function $g=x$. The regularization parameters are specified as $\beta=0.6$ and $i_0=1$, and the initial guess $\omega$ is set to an ellipsoid with center at $(0.5,0.5,0.5)$, where the equatorial radius along the $x$ and $y$ axes is 0.3, 1.5 respectively, and the polar radius is 0.3, and the $\varphi=0.5$. We conduct a thorough evaluation of the accuracy by comparing the approximate solution with the exact solution, the results of this evaluation are presented in Figure \ref{ellipsoid}. Notably, the reconstructed result obtained through iterative processes demonstrates a remarkable alignment with the true solution.

\begin{figure}
\centering
\subfloat[ ]{\includegraphics[width=0.3\textwidth]
                   {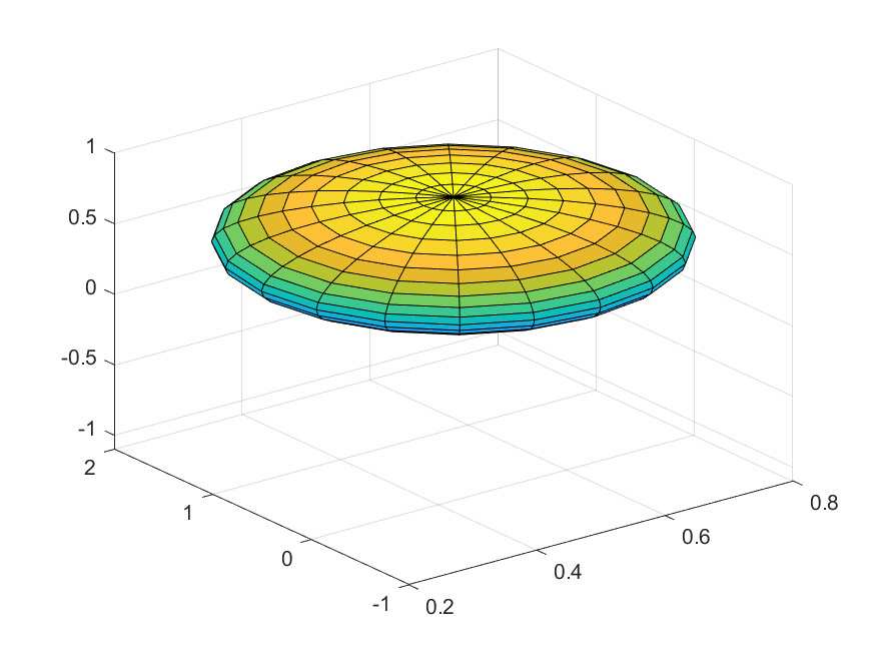}}
\subfloat[]{\includegraphics[width=0.3\textwidth]
                   {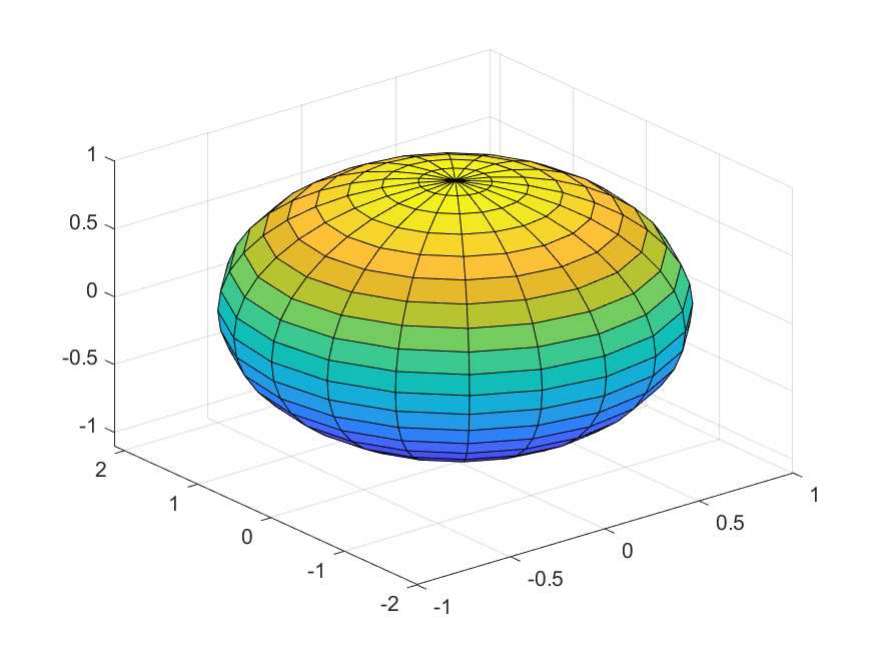}}
\subfloat[]{\includegraphics[width=0.3\textwidth]
                   {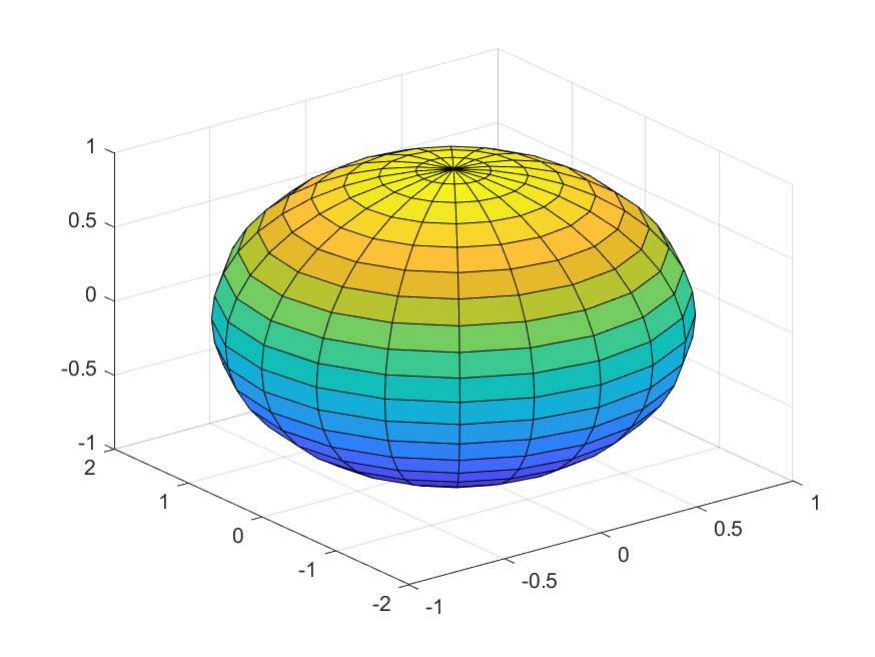}}\\
\subfloat[]{\includegraphics[width=0.3\textwidth]
                   {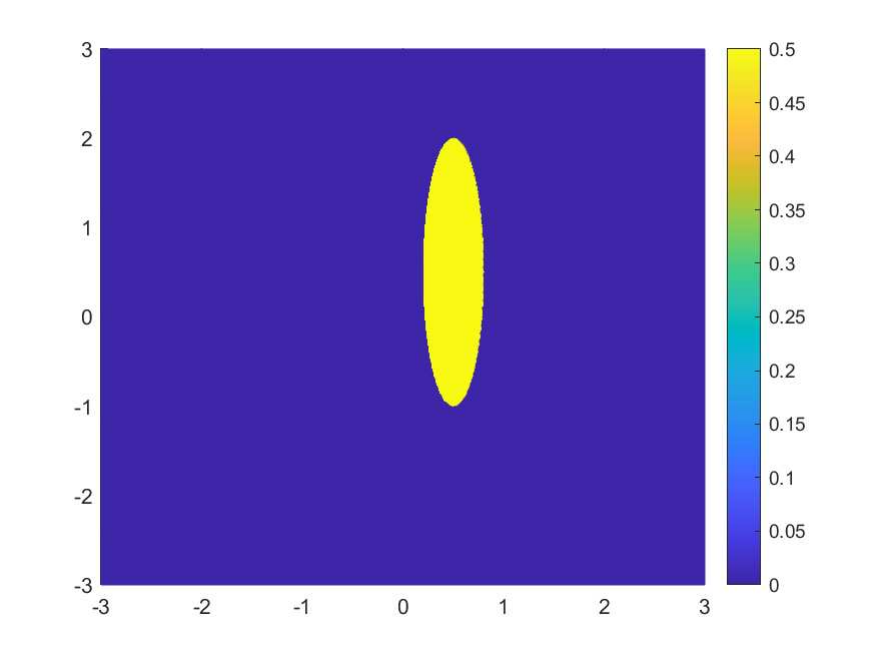}}
\subfloat[]{\includegraphics[width=0.3\textwidth]
                   {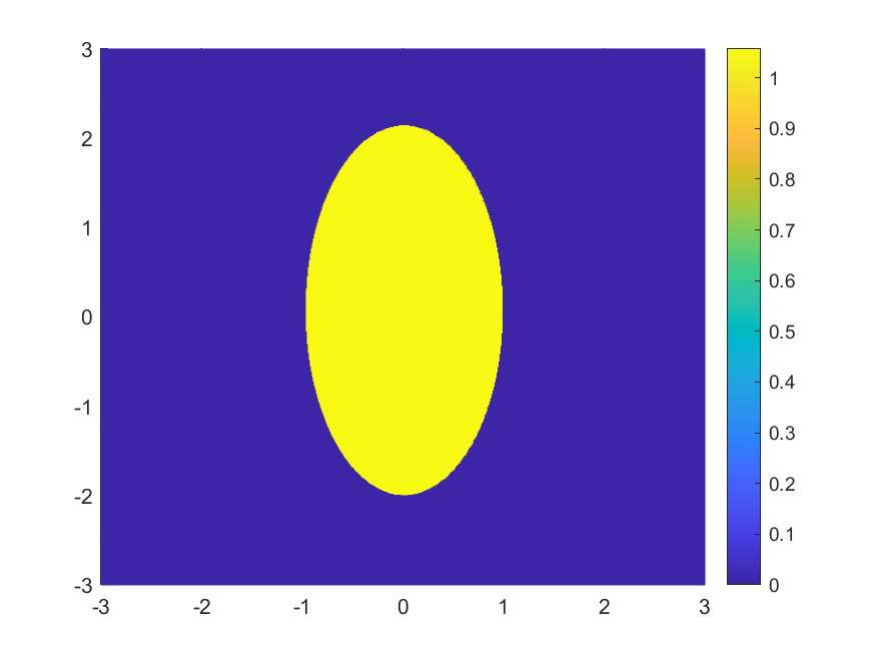}}
\subfloat[]{\includegraphics[width=0.3\textwidth]
                   {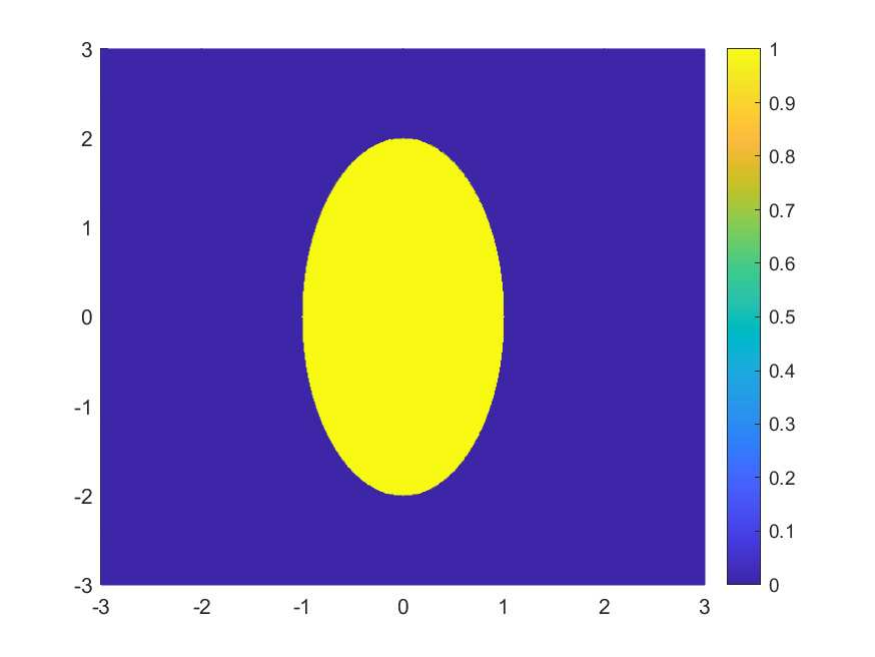}}
\caption{\label{ellipsoid}(a) Initial iteration of $\omega$, (b) reconstructed $\omega$, (c) exact $\omega$, (d) initial iteration of $\omega$ and $\varphi$ in $z=0$ plane, (e) reconstructed $\omega$ and $\varphi$ in $z=0$ plane, (f) the exact $\omega$ and $\varphi$ in $z=0$ plane. }
\end{figure}

\subsection{Polyhedral Domains}

In this subsection, we focus on the scenario where $\omega$ is a polyhedral domain. The input function $g^- = 0$ in all the subsequent examples indicates that the investigation is conducted in a dark environment.

\begin{exm} \label{Pexm1}
In this example, $\omega$  is a rectangle with a length $1$ and a width $1/2$. We denote the four vertices of the rectangle as $\bar{A}, \bar{B}, \bar{C}$, and $\bar{D}$. The coordinates of $\bar{A}$ are $(0, 0)$, the coordinates of $\bar{B}$ are $(\sqrt{3}/4,1/4)$, the coordinates of $\bar{C}$ are $(-1/2,\sqrt{3}/2)$, and the coordinates of the corresponding $\bar{D}$ can be deduced accordingly. The intensity of the source is set to $\varphi=3$.
\end{exm}

In  Example \ref{Pexm1}, the material in the background domain $\Omega$ is lung, characterized by $\mu_a= 0.023,\mu_s'= 2$ in $\Omega$. The regularization parameters $i_0=4, \beta=0.6$ and the initial guess of $\omega$ is a rectangle with a length $\sqrt{0.5}$ and a width $\sqrt{0.5}/2$, and the coordinates of $\bar{A}$ are $(0.5, 0.5)$, the coordinates of $\bar{B}$ are $(1,1)$,  and the intensity of the source is set to $\varphi=2.7$. We examine the accuracy between the reconstructed solution and the true solution in Figure \ref{ju}. Figures \ref{ju} 
 (a)-(d) display the evolution of the reconstructed solution at different iteration steps $i$. Furthermore, Figure \ref{ju} (f) demonstrates the relative error $e_r$ variation as a function of the number of iteration steps. It is worth noting that the approximate solution we obtained at the maximum number of iteration step has a relative error of 0.0458 with respect to the exact solution.  The excellent agreement between the reconstructed solution and the exact solution clearly demonstrates the accurate reconstruction of the location, shape, and size of the internal sources through boundary measurements \eqref{Neumann}, even in the case of a polygonal region for $\omega$.

\begin{figure}
\centering
\subfloat[$i=0$ ]{\includegraphics[width=0.3\textwidth]
                   {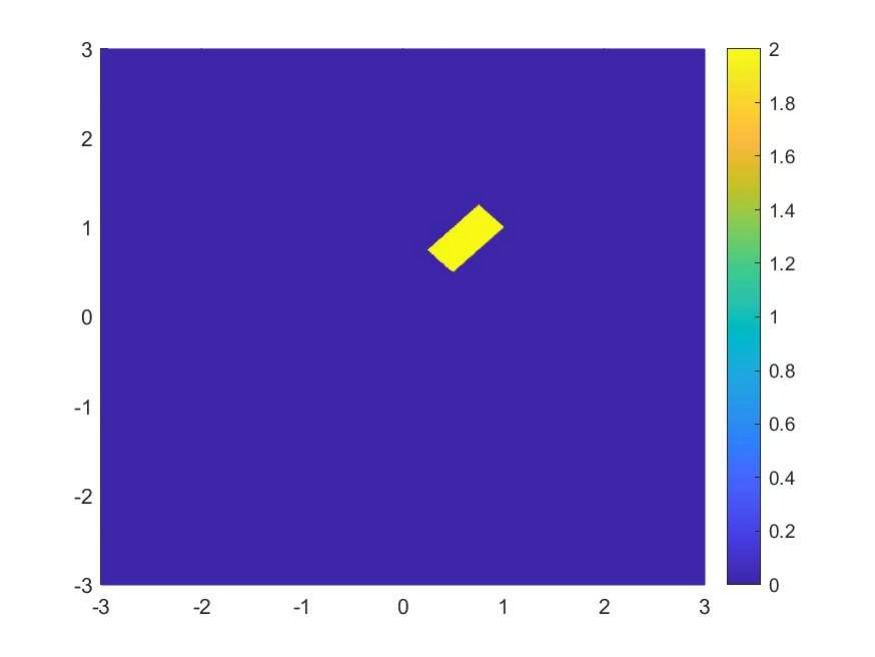}}
\subfloat[$i=5$ ]{\includegraphics[width=0.3\textwidth]
                   {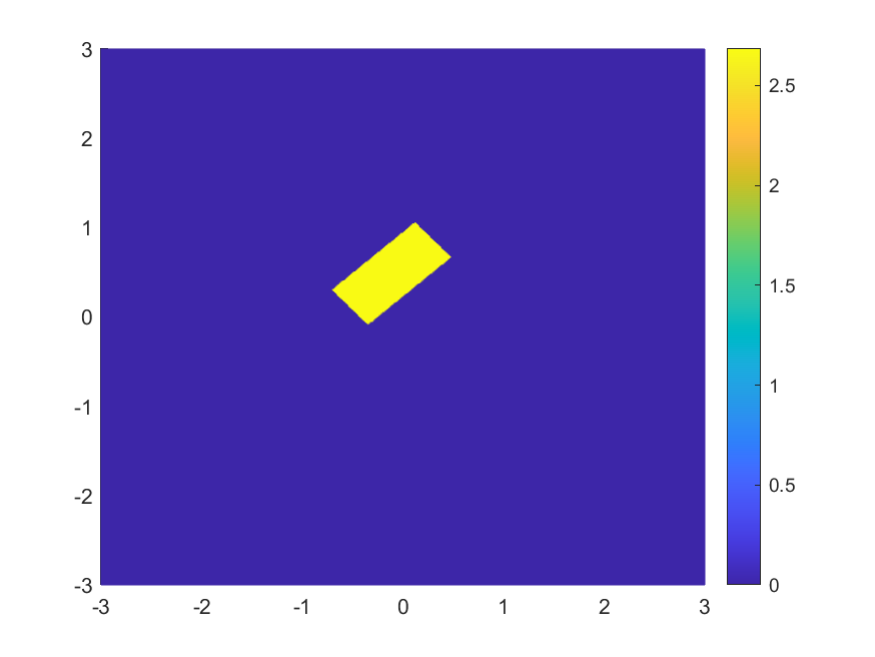}}
\subfloat[ $i=12$]{\includegraphics[width=0.3\textwidth]
                   {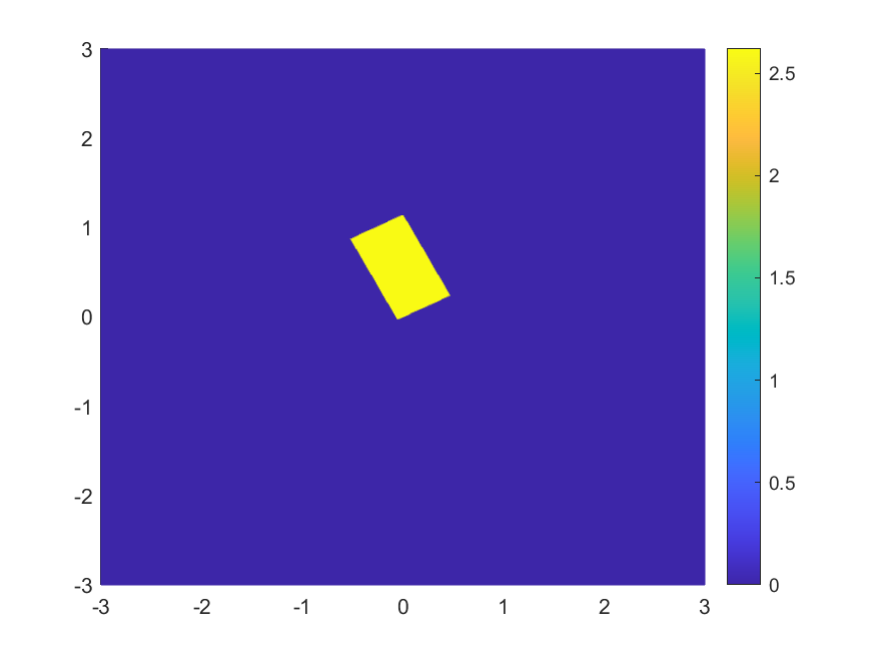}}\\
\subfloat[ $i=20$]{\includegraphics[width=0.3\textwidth]
                   {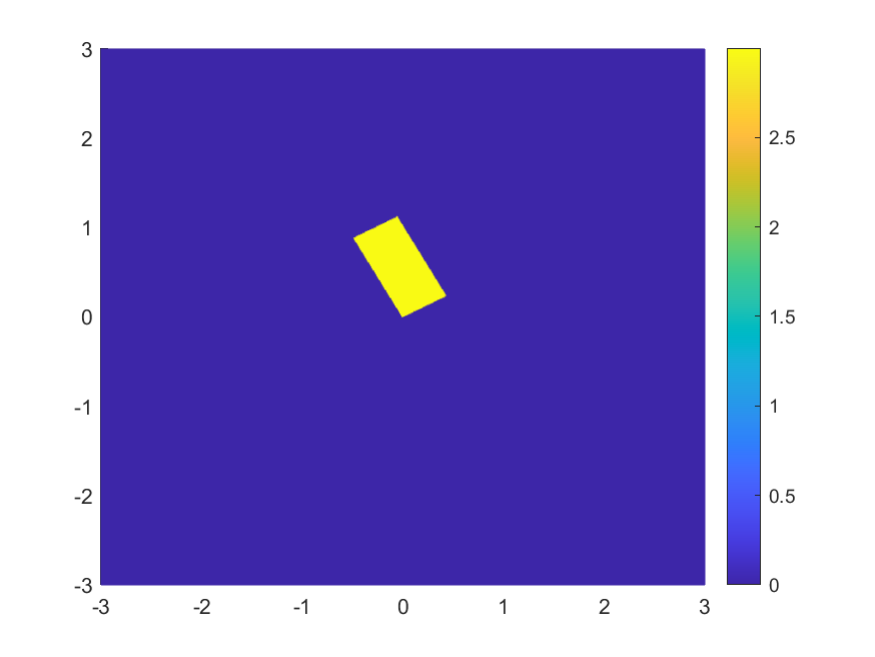}}
\subfloat[ $exact\ solution $]{\includegraphics[width=0.3\textwidth]
                   {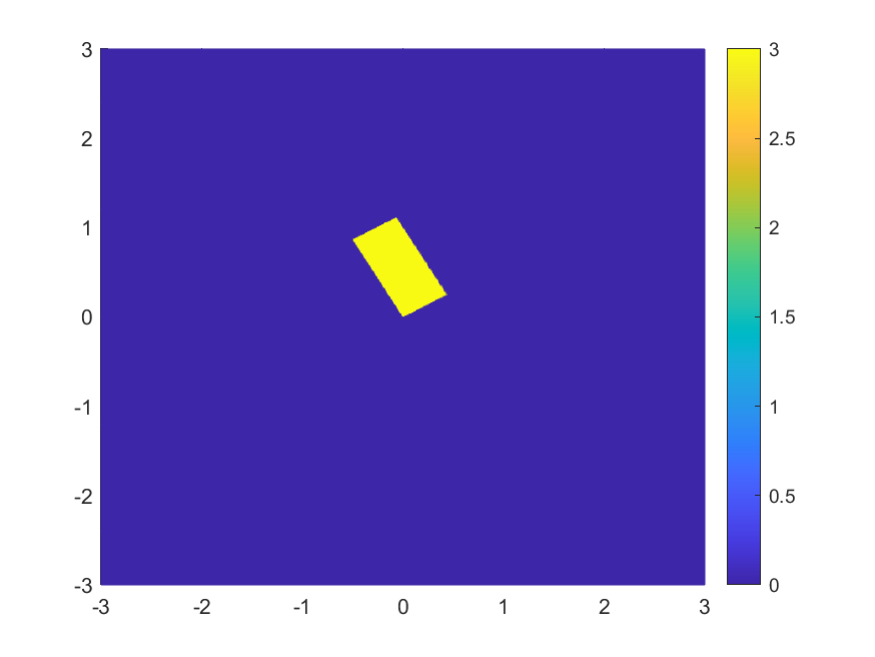}}
\subfloat[ $e_r $]{\includegraphics[width=0.3\textwidth]
                   {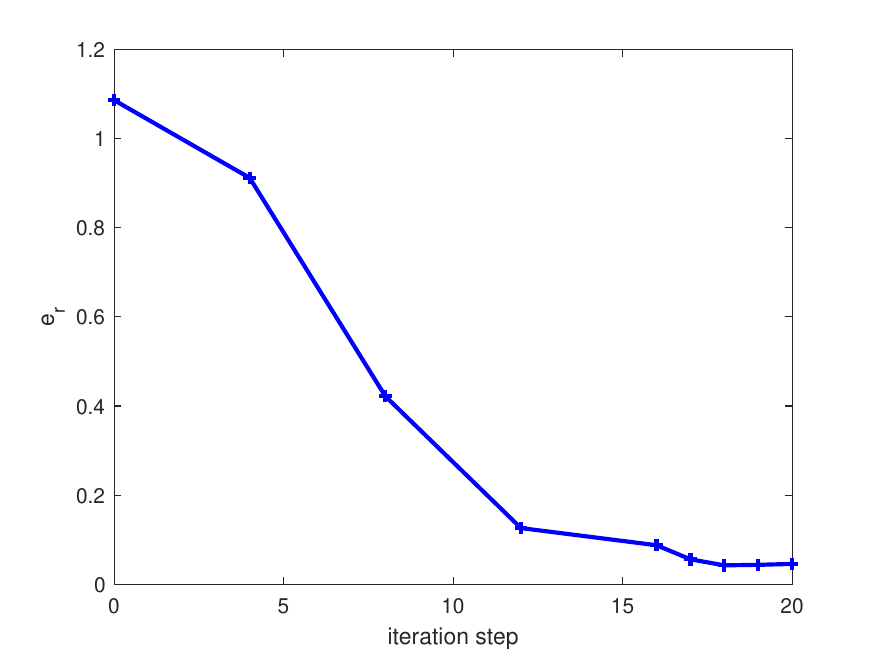}}

\caption{\label{ju}  (a) (b) (c) (d) reconstructed solutions at different iteration steps, (e) exact solution, (f) $e_r$ for different iteration steps $i$. }
\end{figure}

\begin{exm} \label{Pexm2}
In the second example, $\omega$ consists of a combination of a circular region with a radius of 1 and a triangular region. The circular region covers 3/4 of the circle $B_1(0)$, and the coordinates of the vertices of the unconnected vertices of the triangular are $A=(-2, 0)$, and $\varphi=3$ in the support set $\omega$.
\end{exm}

In Example \ref{Pexm2}, the material in the background domain $\Omega$ is heart, characterized by $\mu_a= 0.01,\mu_s'= 1.096$ in $\Omega$. The regularization parameter $i_0=8, \beta=0.7$ and the initial iteration for the unconnected vertices of the triangular is $A=(-1,1)$ and $\varphi=10$. Figure \ref{cor} provides a comprehensive analysis of the accuracy between the reconstructed solution and the exact solution. Figures \ref{cor} (a)-(d) showcase the iterative evolution of the reconstructed solution. Additionally, Figure \ref{cor} (f) illustrates the relative error variation as the number of iteration steps increases.
It is important to highlight that after 7 iterations, the reconstructed solution closely approximates the exact solution. This means that we can accurately determine the location of the corner points and the strength of the source term, even when $\omega$ is the shape of the corona. This observation aligns with the findings of Theorem \ref{thm:mainCorona}.

\begin{exm} \label{Pexm3}
We examine a three-dimensional case where $\omega$ is a cube with a side length of 2 and its origin located at coordinates $(0,0,0)$. Within $\omega$, the source term $\varphi$ is set to a constant value of 3.
\end{exm}

In Example \ref{Pexm3}, the material in the background domain $\Omega$ is heart, characterized by $\mu_a= 0.01,\mu_s'= 1.096$ in $\Omega$. The regularization parameter $i_0=3, \beta=0.68$ and the initial guess of $\omega $ is also a cube with side lengths of 1, the origin located at coordinates $(0.2,0.2,0,2)$, and the initial guess of strength for the source is 2.5. Figure \ref{cub} provides a comprehensive analysis of the accuracy between the reconstructed solution and the exact solution, employing a methodology similar to the two-dimensional examples. The close alignment between the reconstructed solution and the exact solution serves as confirmation of the findings.

\begin{figure}
\centering
\subfloat[$i=0$ ]{\includegraphics[width=0.3\textwidth]
                   {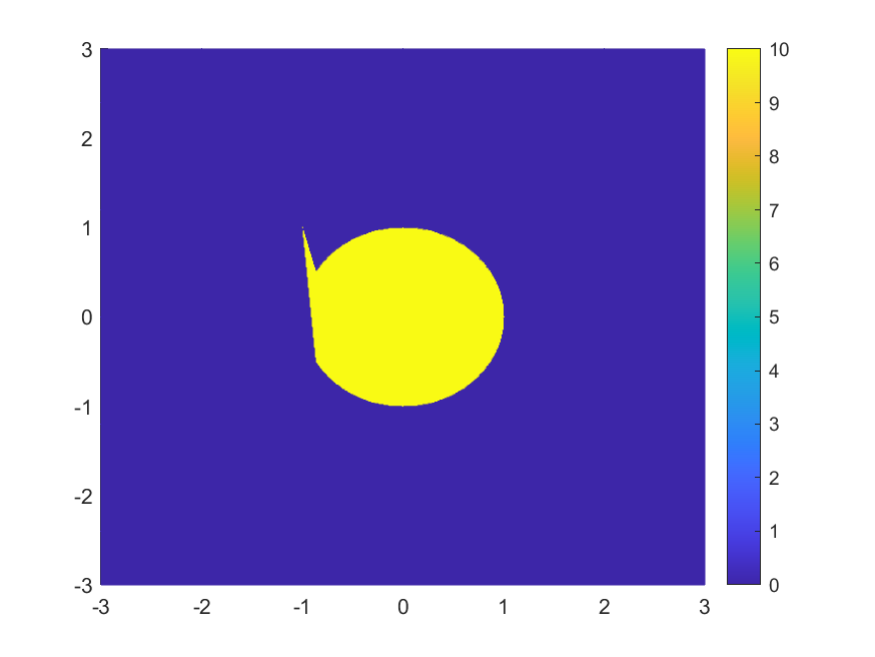}}
\subfloat[$i=1$ ]{\includegraphics[width=0.3\textwidth]
                   {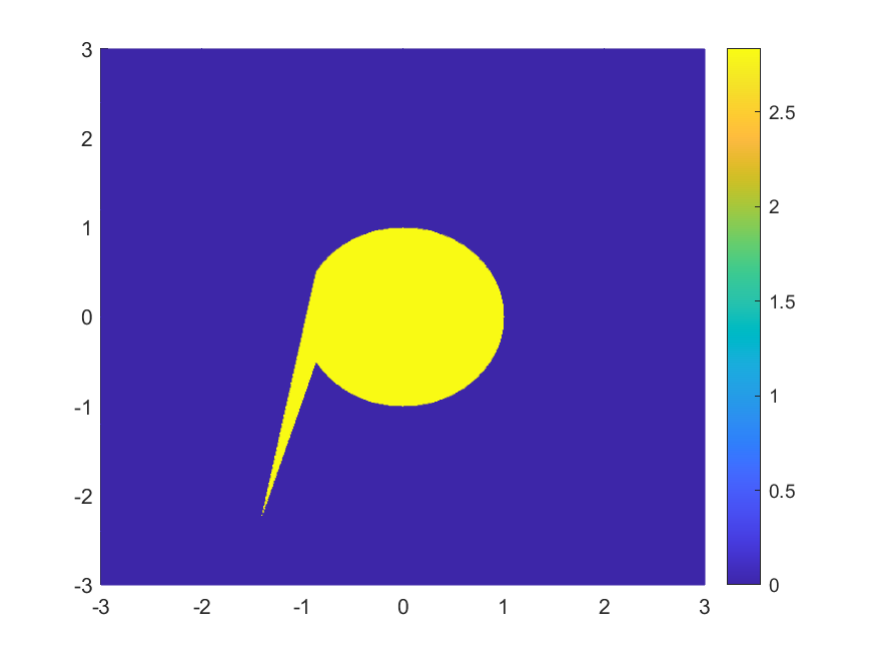}}
\subfloat[ $i=2$]{\includegraphics[width=0.3\textwidth]
                   {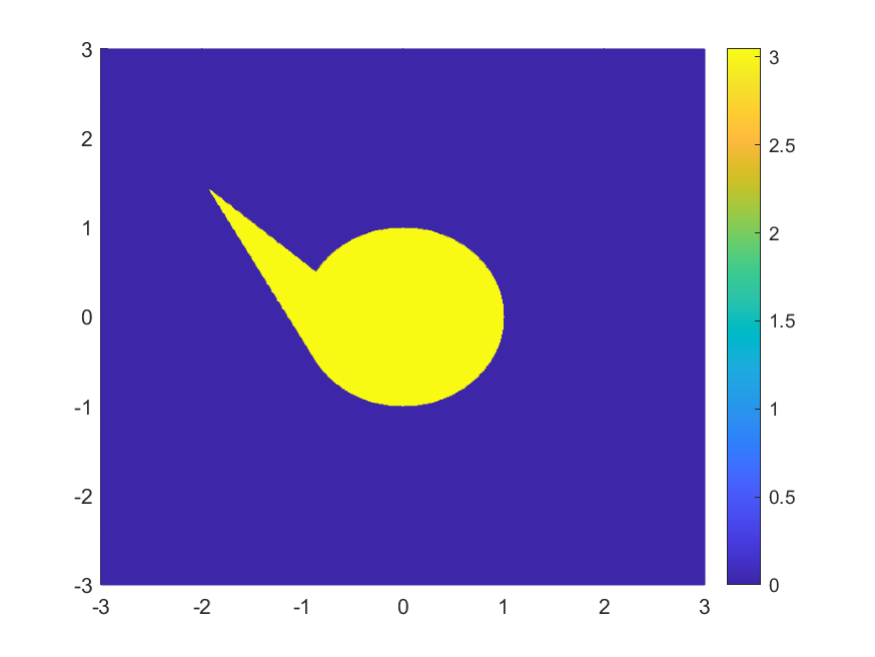}}\\
\subfloat[ $i=7$]{\includegraphics[width=0.3\textwidth]
                   {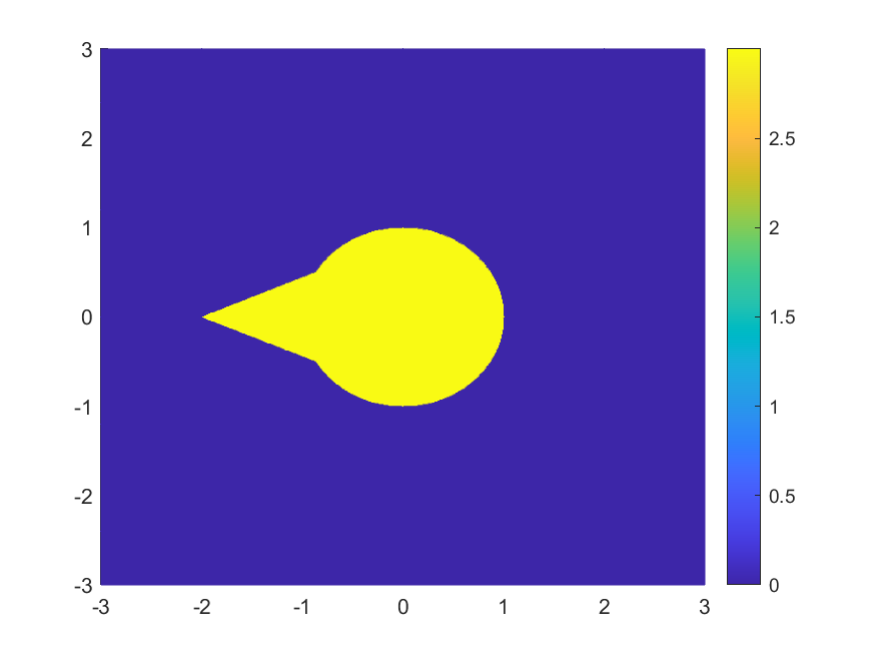}}
\subfloat[ $exact\ solution $]{\includegraphics[width=0.3\textwidth]
                   {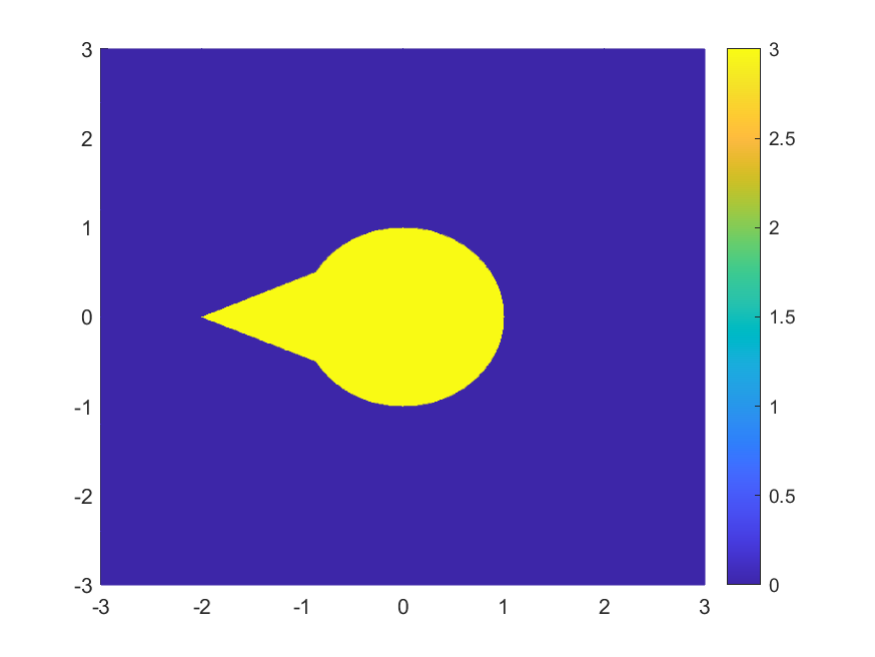}}
\subfloat[ $e_r $]{\includegraphics[width=0.3\textwidth]
                   {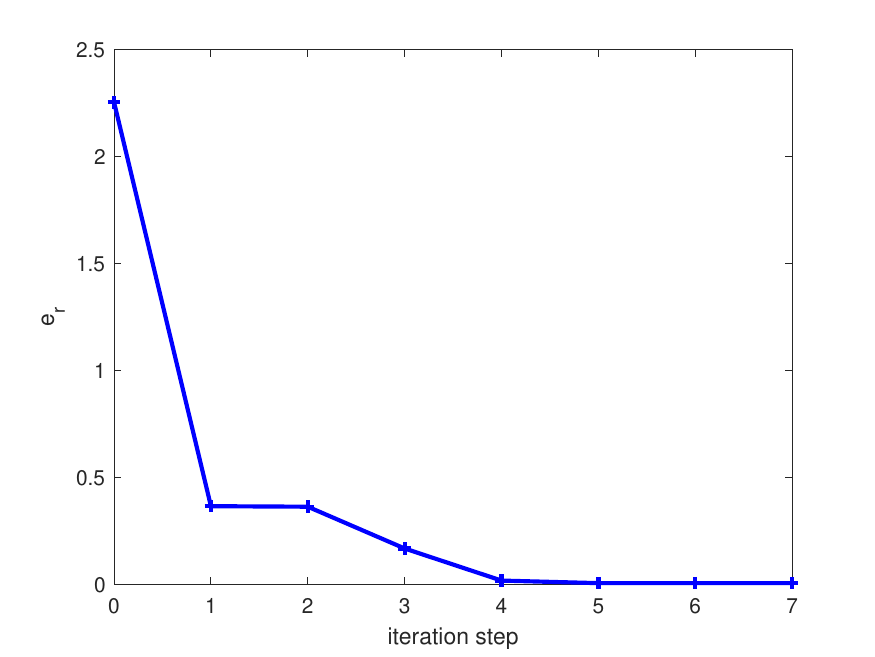}}

\caption{\label{cor}  (a) (b) (c) (d) reconstructed solutions at different iteration steps, (e) exact solution, (f) $e_r$ for different iteration steps $i$. }
\end{figure}

\begin{figure}
\centering
\subfloat[]{\includegraphics[width=0.3\textwidth]
                   {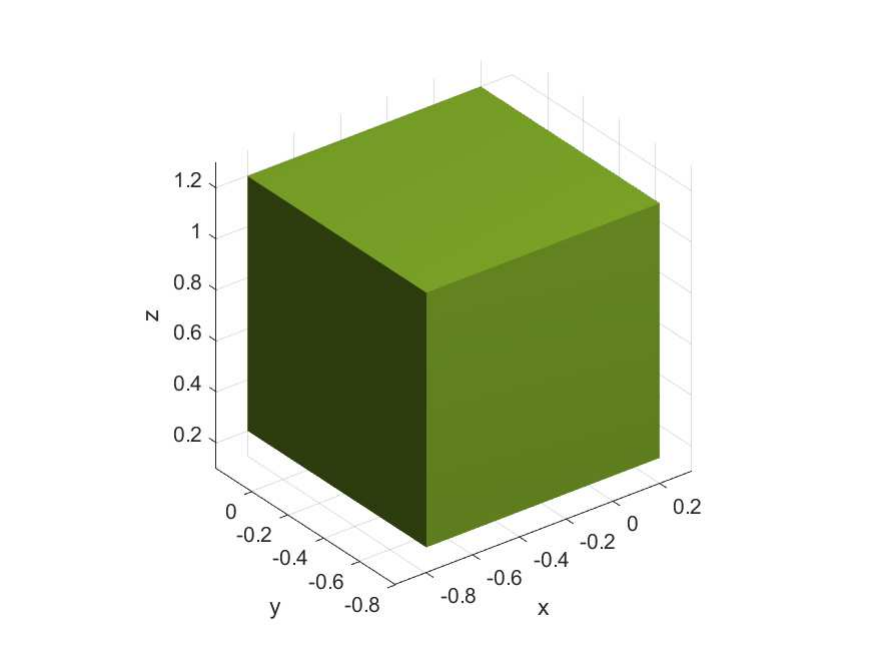}}
\subfloat[ ]{\includegraphics[width=0.3\textwidth]
                   {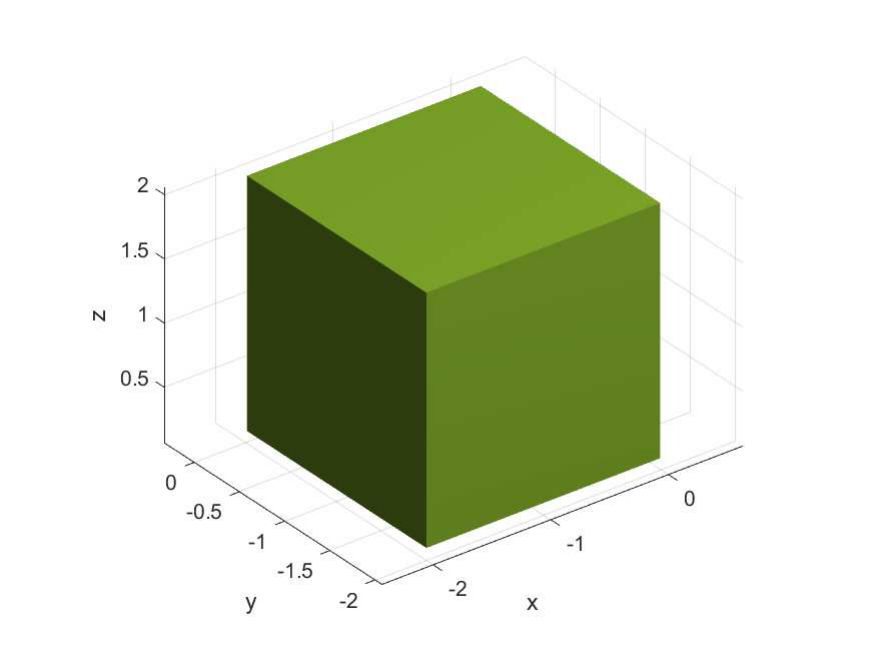}}
\subfloat[]{\includegraphics[width=0.3\textwidth]
                   {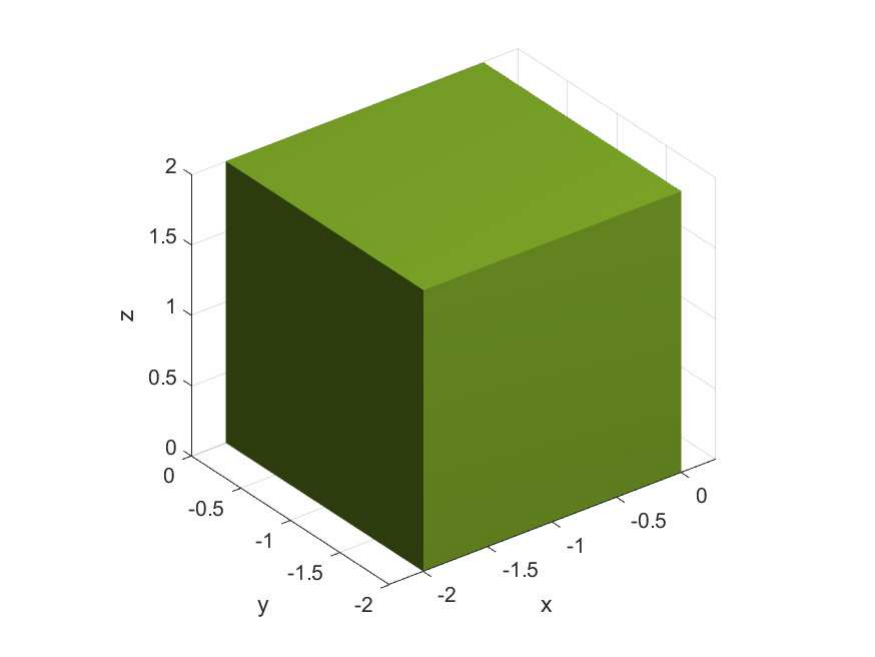}}\\
\subfloat[]{\includegraphics[width=0.3\textwidth]
                   {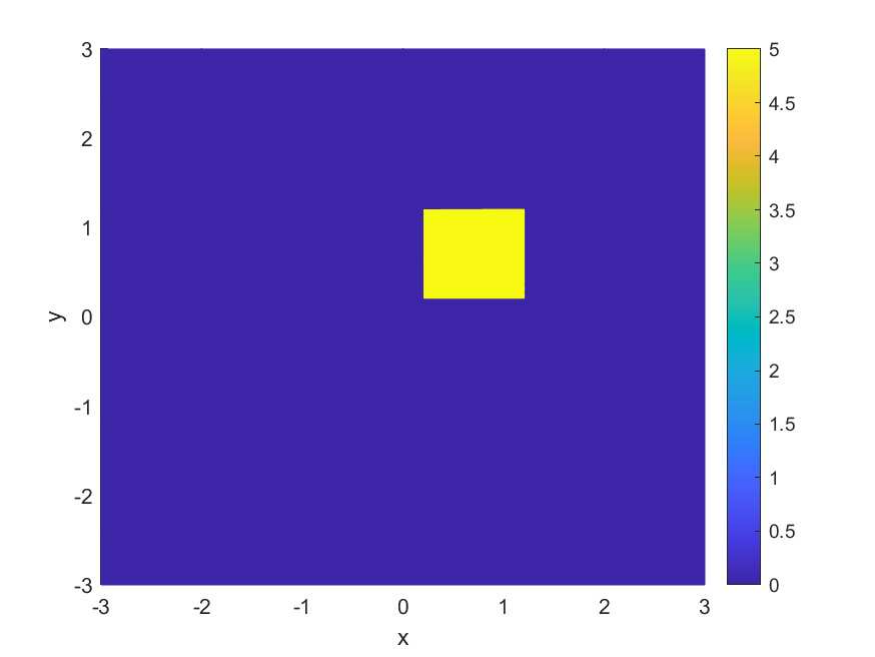}}
\subfloat[]{\includegraphics[width=0.3\textwidth]
                   {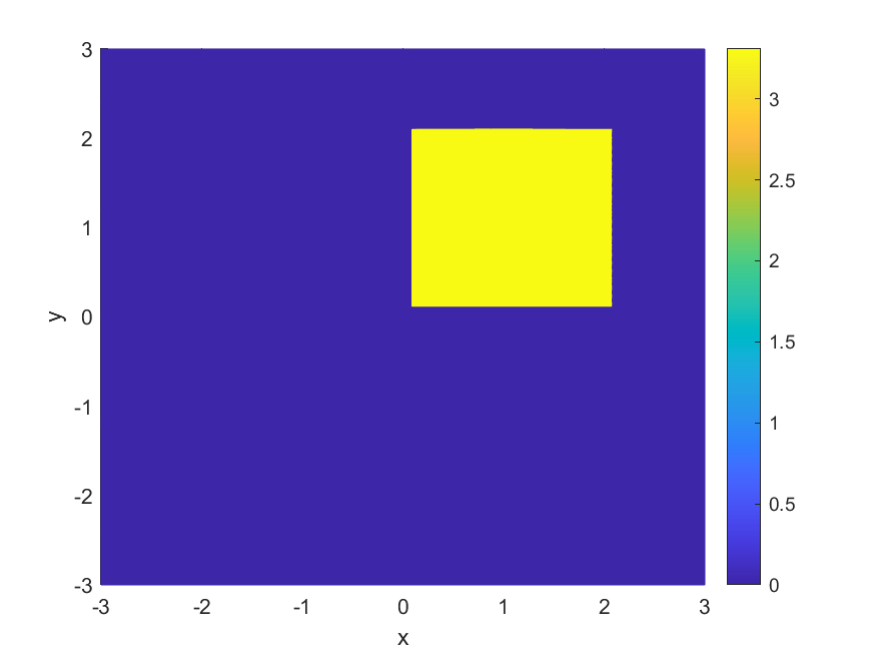}}
\subfloat[]{\includegraphics[width=0.3\textwidth]
                   {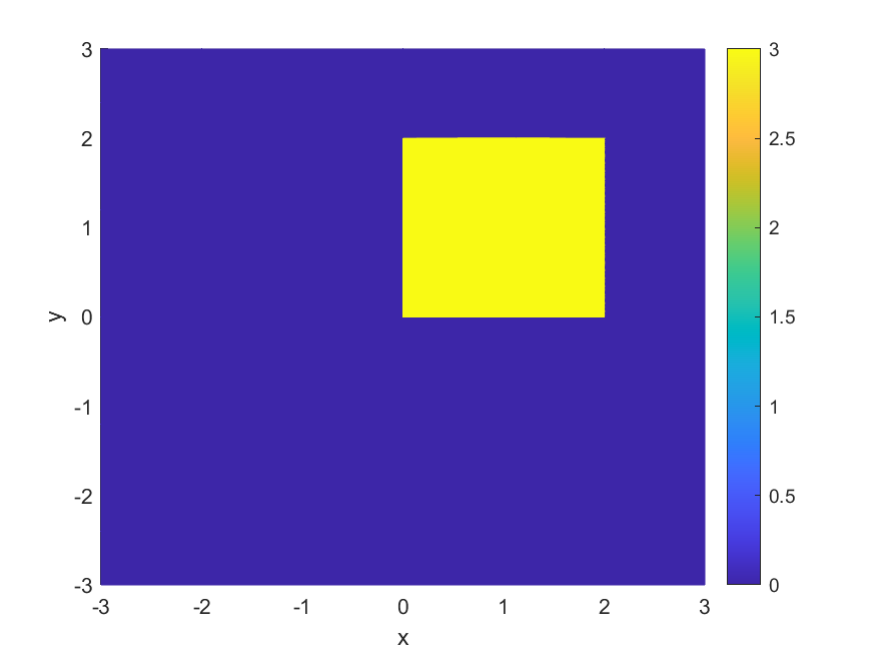}}
\caption{\label{cub} (a) Initial iteration of $\omega$, (b) reconstructed $\omega$, (c) exact $\omega$, (d) initial iteration of $\omega$ and $\varphi$ in $z=0$ plane, (e) reconstructed $\omega$ and $\varphi$ in $z=0$ plane, (f) the exact $\omega$ and $\varphi$ in $z=0$ plane.  }
\end{figure}

\bibliographystyle{siamplain}
\bibliography{ref.bib,refInversePb,refnumerical.bib}
\end{document}